\documentclass{article}

\usepackage[english]{babel}
\usepackage[latin1]{inputenc}
\usepackage[T1]{fontenc}
\usepackage{graphicx,amsfonts,amsmath,amssymb,amsthm,enumitem,mathrsfs,color,epstopdf,booktabs,subcaption,url}
\usepackage[top=2cm,bottom=3cm,left=2cm,right=2cm]{geometry}
\usepackage{sectsty}\sectionfont{\normalsize}\subsectionfont{\normalsize}

\newtheorem{theorem}{Theorem}[section]

\theoremstyle{definition}
\newtheorem{definition}{Definition}[section]
\newtheorem{remark}{Remark}[section]
\newtheorem{example}{Example}[section]

\numberwithin{equation}{section}
\numberwithin{figure}{section}
\numberwithin{table}{section}

\DeclareMathAlphabet{\mathcal}{OMS}{cmsy}{m}{n}
\DeclareMathAlphabet{\mathscr}{U}{rsfso}{m}{n}

\renewcommand{\epsilon}{\varepsilon}
\newcommand{\xx}{\mathbf{x}}
\newcommand{\yy}{\mathbf{y}}

\begin{document}

\title{Introduction to the theory of generalized locally Toeplitz sequences and its applications}
\author{Carlo Garoni\\
\footnotesize Department of Mathematics, University of Rome Tor Vergata, Rome, Italy (garoni@mat.uniroma2.it)}
\date{}
\maketitle

\begin{abstract}
The theory of generalized locally Toeplitz (GLT) sequences was conceived as an apparatus for computing the spectral distribution of matrices arising from the numerical discretization of differential equations (DEs).
The purpose of this review is to introduce the reader to the theory of GLT sequences and to present some of its applications to the computation of the spectral distribution of DE discretization matrices.
We mainly focus on the applications, whereas the theory is presented in a self-contained tool-kit fashion, without entering into technical details.
The exposition is supposed to be understandable to master's degree students in mathematics. It also discloses new more efficient approaches to the spectral analysis of DE discretization matrices as well as a novel spectral analysis tool that has not been considered in the GLT literature heretofore, i.e., the modulus of integral continuity.

\medskip

\noindent{\em Keywords:} generalized locally Toeplitz sequences, singular values and eigenvalues, spectral distribution, numerical discretization of differential equations, finite differences, finite elements

\medskip

\noindent{\em MSC 2010:} 15B05, 15A18, 47B06, 65N06, 65N30
\end{abstract}

{\footnotesize\tableofcontents}

\section{Introduction}\label{Ch1}

Suppose that we are given a linear differential equation (DE)
\begin{equation}\label{DE}
\mathscr Au=f
\end{equation}
(plus some boundary or initial conditions), where $\mathscr A$ is the linear differential operator, $f$ is the right-hand side, and $u$ is the solution.
Suppose also that the DE \eqref{DE} is discretized by a (linear) numerical method characterized by a mesh fineness parameter~$n$. In this case, the computation of the numerical solution reduces to solving a (square) linear system
\begin{equation}\label{LS}
A_n\mathbf u_n=\mathbf f_n.
\end{equation}
The size of the system \eqref{LS} grows as $n$ increases, i.e., as the mesh is progressively refined with the purpose of obtaining more and more accurate approximations $\mathbf u_n$ converging (in some topology) to the exact solution $u$. We are therefore in the presence of a sequence of matrices $A_n$ such that ${\rm size}(A_n)=d_n\to\infty$ as $n\to\infty$.
What is often observed in practice is that $A_n$ enjoys a spectral distribution as $n\to\infty$, in the sense that, for a large class of test functions~$F$,
\begin{equation*}
\lim_{n\to\infty}\frac1{d_n}\sum_{i=1}^{d_n}F(\lambda_i(A_n))=\frac1{\mu_k(D)}\int_DF(\kappa(\mathbf s)){\rm d}\mathbf s,
\end{equation*}
where $\lambda_i(A_n)$, $i=1,\ldots,d_n$, are the eigenvalues of $A_n$, $\mu_k$ is the Lebesgue measure in $\mathbb R^k$, and $\kappa:D\subset\mathbb R^k\to\mathbb C$ is a function. In this situation, $\kappa$ is referred to as the {\em spectral symbol} of the sequence $\{A_n\}_n$. The information contained in the spectral symbol $\kappa$ can be informally summarized as follows: assuming that $n$ is large enough, the eigenvalues of $A_n$, except possibly for $o(d_n)$ outliers, are approximately equal to the samples of $\kappa$ over a uniform grid in $D$. For example, if $k=1$, $d_n=n$ and $D=[a,b]$, then, assuming we have no outliers, the eigenvalues of $A_n$ are approximately equal to 
\[ \kappa\Bigl(a+i\,\frac{b-a}{n}\Bigr),\qquad i=1,\ldots,n, \]
for $n$ large enough. Similarly, if $k=2$, $d_n=n^2$ and $D=[a_1,b_1]\times [a_2,b_2]$, then, assuming we have no outliers, the eigenvalues of $A_n$ are approximately equal to
\[ \kappa\Bigl(a_1+i_1\,\frac{b_1-a_1}n,\,\,a_2+i_2\,\frac{b_2-a_2}n\Bigr),\qquad i_1,i_2=1,\ldots,n, \]
for $n$ large enough. It is therefore clear that the spectral symbol $\kappa$ provides a ``compact'' and quite accurate description of the spectrum of the matrices $A_n$ (for $n$ large enough). For a list of practical uses of the spectral symbol, we refer the reader to \cite[Section~1.1]{GLT-bookI}; see also \cite[p.~617]{F-test}.

The theory of generalized locally Toeplitz (GLT) sequences was conceived as an apparatus 
for computing the spectral symbol~$\kappa$ of sequences of matrices $\{A_n\}_n$ arising from DE discretizations. Indeed, every sequence of this kind often turns out to be a GLT sequence. This happens, for instance, if the numerical method used to discretize the DE under consideration 
belongs to the family of the so-called ``local methods''. Local methods are, for example, finite difference (FD) methods, finite element (FE) methods with ``locally supported'' basis functions, isogeometric analysis (IgA), collocation methods, etc.; in short, all standard numerical methods for the approximation of DEs.

The present review was born from the need to introduce, in a relatively simple and effective way, the theory of GLT sequences and its applications to doctoral students and researchers who are new to the subject. This need has not been satisfied by the GLT literature heretofore.
The material included herein consists of the lecture notes used by the author in his PhD courses on GLT sequences held in various universities.
The exposition is supposed to be understandable to master's degree students in mathematics and includes examples born from students' questions.
With respect to the book~\cite{GLT-bookI}, besides the advantage of being accessible to a wider audience, the present review also contains a number of updates, which reflect the main advances in the field of GLT sequences that followed the publication of \cite{GLT-bookI}. The major updates that are worth to be emphasized are the following.
\begin{itemize}[nolistsep,leftmargin=*]
	\item Several new and more efficient approaches to the GLT spectral analysis of DE discretization matrices are proposed on the basis of the results appeared in \cite{a.u.,NLAA}. 
	\item A novel tool for the GLT spectral analysis of DE discretization matrices is introduced for the first time, namely, the modulus of integral continuity; see Section~\ref{moduli}. This tool is used in the context of FE discretizations to obtain spectral distribution results under minimal integrability assumptions on the DE coefficients; see Section~\ref{sec:FEs} (in particular, Theorem~\ref{FE_T1}).
\end{itemize}
The review is organized as follows. In Section~\ref{Ch9} we present the theory of GLT sequences in a self-contained tool-kit fashion, without entering into technical details.
In Section~\ref{Ch10} we present a number of applications of the theory of GLT sequences to the computation of the spectral distribution of DE discretization matrices.
Following~\cite{GLT-bookI}, we here consider only unidimensional applications (i.e., unidimensional DEs) both for simplicity and because the key ``GLT ideas'' are better conveyed in the univariate setting, without the ``fog'' of technical details. 
For the multivariate setting, we refer the reader to \cite{aip} for a quick overview and to \cite{reducedGLT,rectangularGLT,mblockGLT,GLT-bookII} for more advanced studies; see also \cite{blockGLT} for the so-called univariate ``block'' case.


\section{The Theory of GLT Sequences: A Summary}\label{Ch9}

In this section we present a {\em self-contained} summary of the theory of GLT sequences. Despite its conciseness, our presentation contains {\em everything one needs to know} in order to understand the applications presented in Section~\ref{Ch10}.

\paragraph{Matrix-sequences}
A {\em matrix-sequence} is a sequence of the form $\{A_n\}_n$, where $A_n$ is an $n\times n$ matrix. 

\paragraph{Singular value and eigenvalue distribution of a matrix-sequence}
Let $\mu_k$ be the Lebesgue measure in $\mathbb R^k$. Throughout this work, all terminology from measure theory (such as ``measurable set'', ``measurable function'', ``a.e.'', etc.) is always referred to the Lebesgue measure.
Let $C_c(\mathbb R)$ (resp., $C_c(\mathbb C)$) be the space of continuous complex-valued functions with bounded support defined on $\mathbb R$ (resp., $\mathbb C$). If $A\in\mathbb C^{n\times n}$, the singular values and eigenvalues of $A$ are denoted by $\sigma_1(A),\ldots,\sigma_n(A)$ and $\lambda_1(A),\ldots,\lambda_n(A)$, respectively. The set of the eigenvalues (i.e., the spectrum) of $A$ is denoted by $\Lambda(A)$.

\begin{definition}[\textbf{singular value and eigenvalue distribution of a matrix-sequence}]\label{def:distr}
Let $\{A_n\}_n$ be a matrix-sequence and let $f:D\subset\mathbb R^k\to\mathbb C$ be a measurable function defined on a set $D$ with $0<\mu_k(D)<\infty$.
\begin{itemize}[nolistsep,leftmargin=*]
	\item We say that $\{A_n\}_n$ has a singular value distribution described by $f$, and we write $\{A_n\}_n\sim_\sigma f$, if
	\begin{equation}\label{tilde_sigma}
	\lim_{n\to\infty}\frac1{n}\sum_{i=1}^{n}F(\sigma_i(A_n))=\frac1{\mu_k(D)}\int_DF(|f(\xx)|){\rm d}\xx,\qquad\forall\,F\in C_c(\mathbb R).
	\end{equation}
	In this case, $f$ is called the {\em singular value symbol} of $\{A_n\}_n$.
	\item We say that $\{A_n\}_n$ has a spectral (or eigenvalue) distribution described by $f$, and we write $\{A_n\}_n\sim_\lambda f$, if
	\begin{equation}\label{tilde_lambda}
	\lim_{n\to\infty}\frac1{n}\sum_{i=1}^{n}F(\lambda_i(A_n))=\frac1{\mu_k(D)}\int_DF(f(\xx)){\rm d}\xx,\qquad\forall\,F\in C_c(\mathbb C).
	\end{equation}
	In this case, $f$ is called the {\em spectral (or eigenvalue) symbol} of $\{A_n\}_n$.
\end{itemize}
\end{definition}

Whenever we write a relation such as $\{A_n\}_n\sim_\sigma f$ or $\{A_n\}_n\sim_\lambda f$, it is understood that $\{A_n\}_n$ and $f$ are as in Definition~\ref{def:distr}, i.e., $\{A_n\}_n$ is a matrix-sequence and $f$ is a measurable function defined on a subset $D$ of some $\mathbb R^k$ with $0<\mu_k(D)<\infty$. 
If $\{A_n\}_n$ has both a singular value and a spectral distribution described by $f$, we write $\{A_n\}_n\sim_{\sigma,\lambda}f$.

We report in {\bf S1} and {\bf S2} the statements of two useful results concerning the spectral distribution of matrix-sequences.
For the proof of \textbf{S1}, see \cite[Lemma~2.1 and Corollary~3.1]{GLT-bookI}. For the proof of {\bf S2}, see \cite{NLAA}.
Throughout this work, if $A\in\mathbb C^{n\times n}$ and $1\le p\le\infty$, we denote by $\|A\|_p$ the Schatten $p$-norm of $A$, i.e., the $p$-norm of the vector $(\sigma_1(A),\ldots,\sigma_n(A))$ formed by the singular values of $A$.
The Schatten 1-norm $\|A\|_1$ is the sum of the singular values of $A$ and is often referred to as the trace-norm of $A$. The Schatten $2$-norm $\|A\|_2$ coincides with the Frobenius norm. Indeed, since the squares of the singular values of $A$ are the eigenvalues of $A^*A$, we have
\begin{align*}
\|A\|_2&=\sqrt{\sum_{i=1}^n\sigma_i(A)^2}=\sqrt{\sum_{i=1}^n\lambda_i(A^*A)}=\sqrt{{\rm trace}(A^*A)}=\sqrt{\sum_{j=1}^n(A^*A)_{jj}}\\
&=\sqrt{\sum_{j=1}^n\sum_{i=1}^n(A^*)_{ji}A_{ij}}=\sqrt{\sum_{i,j=1}^n|A_{ij}|^2}.
\end{align*}
The Schatten $\infty$-norm $\|A\|_\infty$ is the largest singular value of $A$ and coincides with the induced 2-norm $\|A\|$, also known as the spectral or Euclidean norm; see \cite[p.~33]{GLT-bookI}. For more on Schatten $p$-norms, see \cite[Section~2.4.3]{GLT-bookI} and \cite{Bhatia}.
The (topological) closure of a set $S$ is denoted by $\overline S$.
We use a notation borrowed from probability theory to indicate sets. For example, if $f,g:D\subseteq\mathbb R^k\to\mathbb R$, then $\{f\le1\}=\{\xx\in D:f(\xx)\le1\}$, $\mu_k\{f>0,\:g<0\}$ is the measure of the set $\{\xx\in D:f(\xx)>0,\:g(\xx)<0\}$, etc. 
\begin{enumerate}[leftmargin=18pt,nolistsep]
	\item[\textbf{S1.}] If $\{A_n\}_n\sim_\lambda f$ and $\Lambda(A_n)\subseteq S$ for all $n$ then 
	$f\in\overline{S}$ a.e.
	\item[\textbf{S2.}] If $A_n=X_n+Y_n$ where
	\begin{itemize}[nolistsep,leftmargin=*]
		\item each $X_n$ is Hermitian and $\{X_n\}_n\sim_\lambda f$,
		\item $\|Y_n\|_2=o(n^{1/2})$,
	\end{itemize}
	then $\{A_n\}_n\sim_\lambda f$.
\end{enumerate}

\paragraph{Informal meaning}
Assuming that $f$ is at least continuous a.e., the spectral distribution $\{A_n\}_n\sim_\lambda f$ in \eqref{tilde_lambda} has the following informal meaning: all the eigenvalues of $A_n$, except possibly for $o(n)$ outliers, are approximately equal to the samples of $f$ over a uniform grid in $D$ (for $n$ large enough).\,\footnote{\,Note that, if the eigenvalues of $A_n$ were exact samples of $f$ over a uniform grid in $D$, then \eqref{tilde_lambda} would be intuitively satisfied; think, for example, to the case where $D=[a,b]$ and $\lambda_i(A_n)=f(a+i\,\frac{b-a}{n})$ for $i=1,\ldots,n$.}
We refer the reader to \cite{sdau,F-test} for a mathematical proof under certain assumptions of the informal meaning given here.
For instance, if $k=1$ and $D=[a,b]$, then, assuming we have no outliers, the eigenvalues of $A_n$ are approximately equal to 
\[ f\Bigl(a+i\,\frac{b-a}{n}\Bigr),\qquad i=1,\ldots,n, \]
for $n$ large enough. Similarly, if $k=2$, $n$ is a perfect square and $D=[a_1,b_1]\times [a_2,b_2]$, then, assuming we have no outliers, the eigenvalues of $A_n$ are approximately equal to
\[ f\Bigl(a_1+i_1\,\frac{b_1-a_1}{\sqrt n},\,\,a_2+i_2\,\frac{b_2-a_2}{\sqrt n}\Bigr),\qquad i_1,i_2=1,\ldots,\sqrt n, \]
for $n$ large enough. A completely analogous meaning can also be given for the singular value distribution $\{A_n\}_n\sim_\sigma f$ in \eqref{tilde_sigma}.

\paragraph{Monotone rearrangement}
The singular value symbol and the spectral symbol of a matrix-sequence are not unique.
In particular, if we have $\{A_n\}_n\sim_{\sigma,\lambda}f$ for some real measurable function $f:D\subset\mathbb R^k\to\mathbb R$ with $0<\mu_k(D)<\infty$, then we also have $\{A_n\}_n\sim_{\sigma,\lambda}f^\dag$, where
\begin{equation*}
f^\dag:(0,1)\to\mathbb R,\qquad f^\dag(t)=\inf\biggl\{u\in\mathbb R:\frac{\mu_k\{f\le u\}}{\mu_k(D)}\ge t\biggr\}.
\end{equation*}
This result follows from Definition~\ref{def:distr} and the following property \cite[Section~3.1]{sdau}:
\begin{equation*}
\int_0^1F(f^\dag(t)){\rm d}t=\frac{1}{\mu_k(D)}\int_DF(f(\xx)){\rm d}\xx,\qquad\forall\,F\in C_c(\mathbb C).
\end{equation*}
The function $f^\dag$---which is monotone non-decreasing on $(0,1)$ by definition, since the function $u\mapsto\mu_k\{f\le u\}$ is monotone non-decreasing on $\mathbb R$---is referred to as the {\em monotone rearrangement} of $f$.
A simple procedure for constructing the monotone rearrangement of a bounded and a.e.\ continuous function is reported in {\bf R1} \cite[Theorem~3.2]{sdau}.
We first recall that, given a real measurable function $f:D\subseteq\mathbb R^k\to\mathbb R$, the essential range of $f$ is defined as
\[ \mathcal{ER}(f)=\{y\in\mathbb R:\hspace{0.5pt}\mu_k\{f\in(y-\epsilon,y+\epsilon)\}>0\hspace{0.5pt}\mbox{ for every }\hspace{0.5pt}\epsilon>0\}, \]
and the essential infimum (resp., supremum) of $f$ on $D$ are defined as
\[ \mathop{\rm ess\,inf\vphantom{p}}_Df=\inf\mathcal{ER}(f),\qquad\mathop{\rm ess\,sup}_Df=\sup\mathcal{ER}(f). \]
Roughly speaking, the essential range of $f$ is the ``true'' range of $f$. For example, if
\[ f:[0,1]\to\mathbb R,\qquad f(x)=\left\{\begin{aligned}
&x, &\quad\mbox{if }x\ne\textstyle{\frac12},\\
&2, &\quad\mbox{if }x=\textstyle{\frac12},
\end{aligned}\right. \]
then
\begin{alignat*}{3}
\mathcal{ER}(f)&=[0,1],&\qquad f([0,1])&=\{2\}\cup[0,1]\setminus\{\textstyle{\frac12}\},\\
\mathop{\rm ess\,inf}_{[0,1]}f&=0=\inf_{[0,1]}f,&\qquad\mathop{\rm ess\,sup}_{[0,1]}f&=1\ne2=\sup_{[0,1]}f.
\end{alignat*}
If $f:D\subseteq\mathbb R^k\to\mathbb R$ is measurable and $\mathcal{ER}(f)$ is bounded, then $f^\dag$ is defined also at the endpoints of the interval $(0,1)$ as follows \cite[Lemma~3.1 and Definition~3.2]{sdau}:
\[ f^\dag(0)=\lim_{y\to0}f^\dag(y)=\mathop{\rm ess\,inf\vphantom{p}}_Df,\qquad f^\dag(1)=\lim_{y\to1}f^\dag(y)=\mathop{\rm ess\,sup}_Df. \]

\begin{enumerate}[leftmargin=23pt,nolistsep]
	\item[\textbf{R1.}] Let $D=[a_1,b_1]\times\cdots\times[a_k,b_k]$ be a hyperrectangle in $\mathbb R^k$ and let $f:D\to\mathbb R$ be a bounded and a.e.\ continuous function such that $\mathcal{ER}(f)=[\inf_Df,\sup_Df]$.
	For each $r\in\mathbb N=\{1,2,\ldots\}$, consider the uniform samples
	\begin{equation*}
	f\Bigl(a_1+i_1\frac{b_{1_{\vphantom{1}}}-a_{1_{\vphantom{1}}}}{r},\ldots,a_k+i_k\frac{b_{k_{\vphantom{1}}}-a_{k_{\vphantom{1}}}}{r}\Bigr),\qquad i_1,\ldots,i_k=1,\ldots,r,
	\end{equation*}
	sort them in ascending order, and put them into a vector $(s_0,s_1,\ldots,s_{r^k})$.
	Let $f^\dag_r:[0,1]\to\mathbb R$ be the piecewise linear function that interpolates the samples $(s_0,s_1,\ldots,s_{r^k})$ over the equally spaced nodes $(0,\frac{1}{r^{k\vphantom{\int}}},\frac{2}{r^{k\vphantom{\int}}},\ldots,1)$ in $[0,1]$. Then $f^\dag_r\to f^\dag$ uniformly on $[0,1]$ as $r\to\infty$.
\end{enumerate}

\paragraph{Zero-distributed sequences}
A matrix-sequence $\{Z_n\}_n$ such that $\{Z_n\}_n\sim_\sigma0$ 
is called 
zero-distributed sequence. In other words, $\{Z_n\}_n$ is zero-distributed if and only if 
\[ \lim_{n\to\infty}\frac1{n}\sum_{i=1}^nF(\sigma_i(Z_n))=F(0),\qquad\forall\,F\in C_c(\mathbb R). \]
{\bf Z1}--{\bf Z2} provide us with an important characterization of zero-distributed sequences together with a useful sufficient condition for detecting such sequences. For the proof of {\bf Z1}--{\bf Z2}, see \cite[Section~3.4]{GLT-bookI}. 
Throughout this work we use the natural convention $1/\infty=0$.
\begin{enumerate}[leftmargin=22pt,nolistsep]
	\item[\textbf{Z1.}] $\{Z_n\}_n\sim_\sigma0$ if and only if $Z_n=R_n+N_n$ with $n^{-1}{\rm rank}(R_n)\to0$ and $\|N_n\|\to0$.
	\item[\textbf{Z2.}] $\{Z_n\}_n\sim_\sigma0$ if there is a $p\in[1,\infty]$ such that $\|Z_n\|_p=o(n^{1/p})$.
\end{enumerate}

\paragraph{Sequences of diagonal sampling matrices}
Throughout this work, any finite sequence of points in $\mathbb R$ is referred to as a grid.
If $n\in\mathbb N$, $a:[0,1]\to\mathbb C$ and $\mathcal G_n=\{x_{i,n}\}_{i=1,\ldots,n}$ is a grid of $n$ points in $[0,1]$, the $n$th diagonal sampling matrix generated by $a$ corresponding to the grid $\mathcal G_n$ is the $n\times n$ diagonal matrix given by
\begin{equation*}
D_n^{\mathcal G_n}(a)=\mathop{\rm diag}_{i=1,\ldots,n}a(x_{i,n}).
\end{equation*}
Any matrix-sequence of the form $\{D_n^{\mathcal G_n}(a)\}_n$ is referred to as a sequence of diagonal sampling matrices generated by $a$. We say that the grid $\mathcal G_n=\{x_{i,n}\}_{i=1,\ldots,n}\subset[0,1]$ is asymptotically uniform (a.u.)\ in $[0,1]$ if
\[ \lim_{n\to\infty}m(\mathcal G_n)=0, \]
where
\[ m(\mathcal G_n)=\max_{i=1,\ldots,n}\biggl|x_{i,n}-\frac in\biggr| \]
is the distance of $\mathcal G_n$ from the uniform grid $\{\frac in\}_{i=1,\ldots,n}$. 
In the special case where $\mathcal G_n=\{\frac in\}_{i=1,\ldots,n}$, we write $D_n(a)$ instead of $D_n^{\mathcal G_n}(a)$.

\paragraph{Toeplitz sequences}
A Toeplitz matrix is a matrix whose entries are constant along each diagonal, i.e., a matrix of the form
\begin{equation*}
[a_{i-j}]_{i,j=1}^{n}=\begin{bmatrix}
a_0 & a_{-1} & \ \cdots & \ \ \cdots & a_{-(n-1)} \\
a_1 & \ddots & \ \ddots & \ \ & \vdots\\
\vdots & \ddots & \ \ddots & \ \ \ddots & \vdots\\
\vdots & & \ \ddots & \ \ \ddots & a_{-1}\\
a_{n-1} & \cdots & \ \cdots & \ \ a_1 & a_0
\end{bmatrix}.
\end{equation*}
If $n\in\mathbb N$ and $f:[-\pi,\pi]\to\mathbb C$ is a function in $L^1([-\pi,\pi])$, the $n$th Toeplitz matrix generated by $f$ is the $n\times n$ matrix
\[ T_n(f)=[f_{i-j}]_{i,j=1}^n, \] 
where the numbers $f_k$ are the Fourier coefficients of $f$,
\[ f_k=\frac1{2\pi}\int_{-\pi}^\pi f(\theta){\rm e}^{-{\rm i}k\theta}{\rm d}\theta,\qquad k\in\mathbb Z. \]
$\{T_n(f)\}_n$ is referred to as the Toeplitz sequence generated by $f$. 
If $f$ is a trigonometric polynomial of degree $r$, i.e., a function of the form
\begin{equation}\label{trig-sum}
f(\theta)=\sum_{k=-r}^rf_k{\rm e}^{{\rm i}k\theta},
\end{equation}
with $f_{-r},\ldots,f_r\in\mathbb C$, then it can be checked by direct computation that, as indicated by the notation, the Fourier coefficients of $f$ are given by the coefficients $f_k$ in \eqref{trig-sum} for $|k|\le r$, and are equal to $0$ for $|k|>r$. 
It follows that the Toeplitz sequence generated by $f$ is a sequence of banded matrices with bandwidth $2r+1$:
\begin{align*}
T_n(f)&=\left[\begin{array}{cc|ccccccc|cc}
f_0 & \multicolumn{1}{c}{\ \ f_{-1}} & \ \ \cdots & \ \ f_{-r} & \ \ & \ \ & \ \ & \ \ & \multicolumn{1}{c}{\ \ } & \ \ & \ \ \\
f_1 & \multicolumn{1}{c}{\ \ \ddots} & \ \ \ddots & \ \ & \ \ \ddots & \ \ & \ \ & \ \ & \multicolumn{1}{c}{\ \ } & \ \ & \ \ \\
\vdots & \multicolumn{1}{c}{\ \ \ddots} & \ \ \ddots & \ \ \ddots & \ \ & \ \ \ddots & \ \ & \ \ & \multicolumn{1}{c}{\ \ } & \ \ & \ \ \\
f_r & \multicolumn{1}{c}{\ \ } & \ \ \ddots & \ \ \ddots & \ \ \ddots & \ \ & \ \ \ddots & \ \ & \multicolumn{1}{c}{\ \ } & \ \ & \ \ \\
& \multicolumn{1}{c}{\ \ \ddots} & \ \ & \ \ \ddots & \ \ \ddots & \ \ \ddots & \ \ & \ \ \ddots & \multicolumn{1}{c}{\ \ } & \ \ & \ \ \\
\cline{3-9}
& \ \ & \ \ f_r & \ \ \cdots & \ \ f_1 & \ \ f_0 & \ \ f_{-1} & \ \ \cdots & \ \ f_{-r} & \ \ & \ \ \\
\cline{3-9}
& \multicolumn{1}{c}{\ \ } & \ \ & \ \ \ddots & \ \ & \ \ \ddots & \ \ \ddots & \ \ \ddots & \multicolumn{1}{c}{\ \ } & \ \ \ddots & \ \ \\
& \multicolumn{1}{c}{\ \ } & \ \ & \ \ & \ \ \ddots & \ \ & \ \ \ddots & \ \ \ddots & \multicolumn{1}{c}{\ \ \ddots} & \ \ & \ \ f_{-r}\\
& \multicolumn{1}{c}{\ \ } & \ \ & \ \ & \ \ & \ \ \ddots & \ \ & \ \ \ddots & \multicolumn{1}{c}{\ \ \ddots} & \ \ \ddots & \ \ \vdots\\
& \multicolumn{1}{c}{\ \ } & \ \ & \ \ & \ \ & \ \ & \ \ \ddots & \ \ & \multicolumn{1}{c}{\ \ \ddots} & \ \ \ddots & \ \ f_{-1}\\
& \multicolumn{1}{c}{\ \ } & \ \ & \ \ & \ \ & \ \ & \ \ & \ \ f_r & \multicolumn{1}{c}{\ \ \cdots} & \ \ f_1 & \ \ f_0
\end{array}\right].
\end{align*}

\paragraph{Approximating classes of sequences}
The notion of approximating classes of sequences (a.c.s.)\ is the fundamental concept on which the theory of GLT sequences is based.
We use the abbreviation ``a.c.s.'' for both the singular ``approximating class of sequences'' and the plural ``approximating classes of sequences''.

\begin{definition}[\textbf{approximating class of sequences}]\label{S.a.c.s.}
Let $\{A_n\}_n$ be a matrix-sequence and let $\{\{B_{n,m}\}_n\}_m$ be a sequence of matrix-sequences. We say that $\{\{B_{n,m}\}_n\}_m$ is an approximating class of sequences (a.c.s.)\ for $\{A_n\}_n$ if the following condition is met: for every $m$ there exists $n_m$ such that, for $n\ge n_m$,
\begin{equation*}
A_n=B_{n,m}+R_{n,m}+N_{n,m},\qquad{\rm rank}(R_{n,m})\le c(m)n,\qquad\|N_{n,m}\|\le\omega(m),
\end{equation*}
where $n_m,\,c(m),\,\omega(m)$ depend only on $m$, and $\displaystyle\lim_{m\to\infty}c(m)=\lim_{m\to\infty}\omega(m)=0$.
\end{definition}

Roughly speaking, $\{\{B_{n,m}\}_n\}_m$ is an a.c.s.\ for $\{A_n\}_n$ if, for every sufficiently large $m$, the sequence $\{B_{n,m}\}_n$ approximates $\{A_n\}_n$ in the sense that $A_n$ is eventually equal to $B_{n,m}$ plus a small-rank matrix (with respect to the matrix size $n$) plus a small-norm matrix. 

The notion of a.c.s.\ is a notion of convergence in the space of matrix-sequences
\[ \mathscr E=\{\{A_n\}_n:\{A_n\}_n\mbox{ is a matrix-sequence}\}, \]
because there exists a distance $d_{\rm a.c.s.}$ on $\mathscr E$ such that
\[ \{\{B_{n,m}\}_n\}_m\textup{ is an a.c.s.\ for }\{A_n\}_n\ \iff\ d_{\rm a.c.s.}(\{\{B_{n,m}\}_n\}_m,\{A_n\}_n)\to0\textup{ as }m\to\infty; \]
see \cite[Section~5.2]{GLT-bookI} and \cite{tau_acs}.
The theory of a.c.s.\ may then be interpreted as an approximation theory for matrix-sequences, and for this reason we will use the convergence notation $\{B_{n,m}\}_n\stackrel{\rm a.c.s.}{\longrightarrow}\{A_n\}_n$ to indicate that $\{\{B_{n,m}\}_n\}_m$ is an a.c.s.\ for $\{A_n\}_n$. For the proof of the next property, see \cite[Corollary~5.3]{GLT-bookI}.
\begin{enumerate}[leftmargin=38pt,nolistsep]
	\item[\textbf{ACS1.}] Let $p\in[1,\infty]$ and suppose that for every $m$ there exists $n_m$ such that, for $n\ge n_m$, 
	\[ \|A_n-B_{n,m}\|_p\le\epsilon(m,n)n^{1/p}, \]
	where $\displaystyle\lim_{m\to\infty}\limsup_{n\to\infty}\epsilon(m,n)=0$. Then $\{B_{n,m}\}_n\stackrel{\rm a.c.s.}{\longrightarrow}\{A_n\}_n$.
\end{enumerate}

\paragraph{Generalized locally Toeplitz sequences}
A generalized locally Toeplitz (GLT) sequence $\{A_n\}_n$ is a special matrix-sequence equipped with a measurable function $\kappa:[0,1]\times[-\pi,\pi]\to\mathbb C$, the so-called {\em symbol}. We use the notation $\{A_n\}_n\sim_{\rm GLT}\kappa$ to indicate that $\{A_n\}_n$ is a GLT sequence with symbol~$\kappa$.

The main properties of GLT sequences are summarized in the following list. The corresponding proofs can be found in \cite[Chapter~8]{GLT-bookI}, except for {\bf GLT2}, whose proof can be found in \cite{NLAA}, and the second item of {\bf GLT3}, whose proof can be found in \cite{a.u.}. After the list of properties {\bf GLT1}--{\bf GLT7}, we give a formal definition of GLT sequences that can be inferred from \cite[Theorem~8.6]{GLT-bookI}, and we explain the origin of the name ``generalized locally Toeplitz sequence''.

Throughout this work, if $A$ is a matrix, we denote by $A^\dag$ the Moore--Penrose pseudoinverse of $A$. What is relevant for our purposes is that $A^\dag=A^{-1}$ whenever $A$ is invertible. For more details on the pseudoinverse of a matrix, see \cite{Bini,GV}. If $A$ is a diagonalizable matrix and $f$ is a function defined at each point of $\Lambda(A)$, we denote by $f(A)$ the unique matrix such that $f(A)\mathbf v=f(\lambda)\mathbf v$ whenever $A\mathbf v=\lambda\mathbf v$. Note that a spectral decomposition of $A$ immediately implies a spectral decomposition of $f(A)$:
\begin{equation}\label{A->f(A)}
A=V\begin{bmatrix}\lambda_1 & & \\ & \ddots & \\ & & \lambda_n\end{bmatrix}V^{-1}\ \implies\ f(A)=V\begin{bmatrix}f(\lambda_1) & & \\ & \ddots & \\ & & f(\lambda_n)\end{bmatrix}V^{-1}.
\end{equation}
For more on matrix functions, we refer the reader to Higham's book~\cite{Higham}.
Hereafter, the composite function $f\circ g$ is denoted by $f(g)$. We recall that, if $f_m,f:D\subseteq\mathbb R^k\to\mathbb C$ are measurable functions, then $f_m\to f$ in measure if, by definition,
\[ \lim_{m\to\infty}\mu_k\{|f_m-f|>\epsilon\}=0,\qquad\forall\,\epsilon>0. \]
\begin{enumerate}[leftmargin=37.5pt,nolistsep]
	\item[\textbf{GLT1.}] If $\{A_n\}_n\sim_{\rm GLT}\kappa$ then $\{A_n\}_n\sim_\sigma\kappa$. If $\{A_n\}_n\sim_{\rm GLT}\kappa$ and each $A_n$ is Hermitian then $\{A_n\}_n\sim_\lambda\kappa$.
	\item[\textbf{GLT2.}] If $\{A_n\}_n\sim_{\rm GLT}\kappa$ and $A_n=X_n+Y_n$, where
	\begin{itemize}[nolistsep,leftmargin=*]
		\item every $X_n$ is Hermitian,
		\item $\|Y_n\|_2=o(n^{1/2})$,
	\end{itemize}
	then $\{A_n\}_n\sim_\lambda\kappa$.
	\item[\textbf{GLT3.}] We have
	\begin{itemize}[nolistsep,leftmargin=*]
		\item $\{T_n(f)\}_n\sim_{\rm GLT}\kappa(x,\theta)=f(\theta)$ if $f\in L^1([-\pi,\pi])$,
		\item $\{D_n^{\mathcal G_n}(a)\}_n\sim_{\rm GLT}\kappa(x,\theta)=a(x)$ if $a:[0,1]\to\mathbb C$ is continuous a.e.\ and $\mathcal G_n$ is a.u.\ in $[0,1]$, 
		\item $\{Z_n\}_n\sim_{\rm GLT}\kappa(x,\theta)=0$ if and only if $\{Z_n\}_n\sim_\sigma0$.
	\end{itemize}
	\item[\textbf{GLT4.}] If $\{A_n\}_n\sim_{\rm GLT}\kappa$ and $\{B_n\}_n\sim_{\rm GLT}\xi$ then
	\begin{itemize}[nolistsep,leftmargin=*]
		\item $\{A_n^*\}_n\sim_{\rm GLT}\overline\kappa$,
		\item $\{\alpha A_n+\beta B_n\}_n\sim_{\rm GLT}\alpha\kappa+\beta\xi$ for all $\alpha,\beta\in\mathbb C$,
		\item $\{A_nB_n\}_n\sim_{\rm GLT}\kappa\xi$.
	\end{itemize}
	\item[\textbf{GLT5.}] If $\{A_n\}_n\sim_{\rm GLT}\kappa$ and $\kappa\ne0$ a.e.\ then $\{A_n^\dag\}_n\sim_{\rm GLT}\kappa^{-1}$.
	\item[\textbf{GLT6.}] If $\{A_n\}_n\sim_{\rm GLT}\kappa$ and each $A_n$ is Hermitian then $\{f(A_n)\}_n\sim_{\rm GLT}f(\kappa)$ for every continuous function $f:\mathbb C\to\mathbb C$.
	\item[\textbf{GLT7.}] $\{A_n\}_n\sim_{\rm GLT}\kappa$ if and only if there exist GLT sequences $\{B_{n,m}\}_n\sim_{\rm GLT}\kappa_m$ such that $\{B_{n,m}\}_n\stackrel{\rm a.c.s.}{\longrightarrow}\{A_n\}_n$ and $\kappa_m\to\kappa$ in measure.
\end{enumerate}
{\bf GLT1}--{\bf GLT2} are the main distribution results for GLT sequences. Roughly speaking, they say that the symbol $\kappa$ of a GLT sequence $\{A_n\}_n$ is automatically a singular value symbol for $\{A_n\}_n$, and it is also a spectral symbol for $\{A_n\}_n$ if the matrices $A_n$ are Hermitian or ``almost Hermitian''.
{\bf  GLT3} lists the fundamental examples of GLT sequences, from which one can construct many other GLT sequences via {\bf GLT4}. {\bf GLT4} says that the set of GLT sequences is a *-algebra. In practice, this means that, if $\{A_{n,i}\}_n$ is a GLT sequence with symbol $\kappa_i$ for $i=1,\ldots,t$, and if $A_n={\rm ops}(A_{n,1},\ldots,A_{n,t})$ is obtained from $A_{n,1},\ldots,A_{n,t}$ by means of certain algebraic operations ``ops'' such as conjugate transpositions, linear combinations and products, then $\{A_n\}_n$ is a GLT sequence with symbol $\kappa={\rm ops}(\kappa_1,\ldots,\kappa_t)$ obtained by performing the same algebraic operations on the symbols $\kappa_1,\ldots,\kappa_t$. {\bf GLT5}--{\bf GLT6} show that the set of GLT sequences is closed under operations more general than sums and products, such as pseudoinversions and matrix functions. {\bf GLT7} is a closure property of GLT sequences that can be rephrased as follows: if a sequence of GLT sequences $\{B_{n,m}\}_n$ converges to a matrix-sequence $\{A_n\}_n$ (in the a.c.s.\ sense), and if the corresponding sequence of symbols $\kappa_m$ converges to a function $\kappa$ (in measure), then $\{A_n\}_n$ is a GLT sequence with symbol $\kappa$.


\begin{definition}[\textbf{generalized locally Toeplitz sequence}]\label{def:GLT}
Let $\{A_n\}_n$ be a matrix-sequence and let $\kappa:[0,1]\times[-\pi,\pi]\to\mathbb C$ be a measurable function. We say that $\{A_n\}_n$ is a generalized locally Toeplitz (GLT) sequence with symbol $\kappa$, and we write $\{A_n\}_n\sim_{\rm GLT}\kappa$, if there exist functions $a_{i,m}$, $f_{i,m}$, $i=1,\ldots,N_m$, such that:
\begin{itemize}[nolistsep,leftmargin=*]
	\item $a_{i,m}:[0,1]\to\mathbb C$ is continuous a.e.\ and $f_{i,m}\in L^1([-\pi,\pi])$. \vspace{3pt}
	\item $\kappa_m(x,\theta)=\sum_{i=1}^{N_m}a_{i,m}(x)f_{i,m}(\theta)\to\kappa(x,\theta)$ in measure on $[0,1]\times[-\pi,\pi]$. \vspace{3pt}
	\item $\{A_{n,m}\}_n=\bigl\{\sum_{i=1}^{N_m}D_n(a_{i,m})T_n(f_{i,m})\bigr\}_n\stackrel{\rm a.c.s.}\longrightarrow\{A_n\}_n$. 
	\end{itemize}
\end{definition}

\begin{remark}[\textbf{origin of the name ``generalized locally Toeplitz sequence''}]
The prototype of a ``locally Toeplitz'' sequence is the matrix-sequence $\{A_n\}_n$, where 
\begin{align*}
A_n&=D_n(a)T_n(f)\\
&=\begin{bmatrix}
a(x_1)f_0 & a(x_1)f_{-1} & \ a(x_1)f_{-2} & \ \cdots & \ \ \cdots & a(x_1)f_{-(n-1)} \\[10pt]
a(x_2)f_1 & \ddots & \ \ddots & \ \ddots & \ \ & \vdots\\[10pt]
a(x_3)f_2 & \ddots & \ \ddots & \ \ddots & \ \ \ddots & \vdots\\[10pt]
\vdots & \ddots & \ \ddots & \ \ddots & \ \ \ddots & a(x_{n-2})f_{-2}\\[10pt]
\vdots & & \ \ddots & \ \ddots & \ \ \ddots & a(x_{n-1})f_{-1}\\[10pt]
a(x_n)f_{n-1} & \cdots & \ \cdots & \ a(x_n)f_2 & \ \ a(x_n)f_1 & a(x_n)f_0
\end{bmatrix},
\end{align*}
$x_i=\frac in$ for $i=1,\ldots,n$, $f\in L^1([-\pi,\pi])$, and $a:[0,1]\to\mathbb C$ is continuous.
The name is due to the fact that the entries of $A_n$ along any diagonal, although not constant as in the case of a Toeplitz matrix, vary ``gradually'' from the top to the bottom of the diagonal. For example, the entries of $A_n$ along the main diagonal are given by $a(x_i)f_0$, $i=1,\ldots,n$. The distance between two consecutive entries is bounded by $\omega_a(\frac1n)|f_0|$, where $\omega_a(\cdot)$ is the modulus of continuity of $a$ and $\omega_a(\delta)\to0$ as $\delta\to0$; see Section~\ref{moduli}.
The transition from the first entry $a(x_1)f_0\approx a(0)f_0$ to the last entry $a(1)f_0$ is more and more gradual as $n$ increases and, in a sense, we can say that the transition is continuous in the limit as $n\to\infty$, just as the function $a(x)f_0$.
As a consequence, any submatrix of $A_n$ made up of $k_n=o(n)$ consecutive rows and columns possesses a sort of approximate Toeplitz structure. 
For instance, the $2\times 2$ leading principal submatrix
\[ \begin{bmatrix}a(x_1)f_0 & a(x_1)f_{-1}\\[3pt]
a(x_2)f_1 & a(x_2)f_0
\end{bmatrix} \]
is approximately equal to
\[ a(x_1)\begin{bmatrix}f_0 & f_{-1}\\ f_1 & f_0 \end{bmatrix} = a(x_1)T_2(f), \]
because the difference between these two matrices goes to $0$ in spectral norm as $n\to\infty$.
Similarly, if $B_{k_n}$ is a principal submatrix of size $k_n$ made up of $k_n=o(n)$ consecutive rows and columns of $A_n$, then $B_{k_n}\approx a(x_i)T_{k_n}(f)$, where $a(x_i)$ is any of the evaluations of $a(x)$ appearing in $B_{k_n}$. More precisely, one can prove that 
\begin{equation*} 
B_{k_n}=a(x_i)T_{k_n}(f)+E_{k_n}, 
\end{equation*}
where the error $E_{k_n}$ tends to the zero matrix in spectral norm as $n\to\infty$, as a consequence of the fact that 
$\omega_a(k_n/n)\to0$. In other words, if we explore ``locally'' the matrix $A_n$, using an ideal microscope and considering a large value of $n$, then we realize that the ``local'' structure of $A_n$ is approximately the Toeplitz structure generated by $f$ and weighted through the function $a(x)$.
This original intuition led Tilli to coin the name ``locally Toeplitz sequences'' \cite{Tilli98}. The name was soon changed to ``generalized locally Toeplitz sequences'' by Serra-Capizzano, who generalized Tilli's intuition and introduced for the first time the modern notion of GLT sequences \cite{glt_1,glt_2}.
\end{remark}

\section{Applications}\label{Ch10}

In this section we present applications of the theory of GLT sequences to the spectral analysis of DE discretization matrices.
In practice, we show how to compute the singular value and spectral distribution of matrix-sequences arising from a DE discretization through the ``GLT tools'' presented in Section~\ref{Ch9}. 
We first consider FD discretizations in Section~\ref{sec:FDs} and then we move to FE discretizations in Section~\ref{sec:FEs}.
For further applications not included herein, we refer the reader to \cite[Chapter~10]{GLT-bookI}.
Before starting, we collect in Section~\ref{preli} some auxiliary results.

\subsection{Preliminaries}\label{preli}

\subsubsection{Matrix-Norm Inequalities}
If $1\le p\le\infty$, we denote by $|\cdot|_p$ both the $p$-norm of vectors and the associated induced norm for matrices:
\begin{align*}
|\xx|_p&=\left\{\begin{aligned}&\textstyle{\left(\sum_{i=1}^m|x_i|^p\right)^{1/p}}, &&\quad\mbox{if $1\le p<\infty$,}\\[3pt]
&\textstyle{\max_{i=1,\ldots,m}|x_i|}, &&\quad\mbox{if $p=\infty$,}\end{aligned}\right.\qquad \xx\in\mathbb C^m,\\[5pt]
|X|_p&=\max_{\substack{\xx\in\mathbb C^m\\\xx\ne\mathbf{0}}}\frac{|X\xx|_p}{|\xx|_p}, \qquad X\in\mathbb C^{m\times m}.
\end{align*}
The 2-norm $|\cdot|_2$ is also known as the spectral (or Euclidean) norm and it is preferably denoted by $\|\cdot\|$.
An important inequality involving the $p$-norms with $p=1,2,\infty$ is the following \cite[p.~29]{GLT-bookI}:
\begin{align}
\|X\|&\le\sqrt{|X|_1|X|_\infty},\qquad X\in\mathbb C^{m\times m}.\label{2-norm}
\end{align}
Since $|X|_1=\max_{j=1,\ldots,m}\sum_{i=1}^m|X_{ij}|$ and $|X|_\infty=\max_{i=1,\ldots,m}\sum_{j=1}^m|X_{ij}|$, the inequality \eqref{2-norm} is particularly useful to estimate the spectral norm of a matrix when we have bounds for its components.

As mentioned in Section~\ref{Ch9}, the Schatten $p$-norm of an $m\times m$ matrix $X$ is defined as the $p$-norm of the vector $(\sigma_1(X),\ldots,\sigma_m(X))$ formed by the singular values of $X$.
The Schatten $p$-norms are deeply studied in Bhatia's book \cite{Bhatia}. 
Here, we just recall a basic trace-norm inequality \cite[Section~2.4.3]{GLT-bookI}:
\begin{align}
\|X\|_1&\le\sum_{i,j=1}^m|X_{ij}|,\qquad X\in\mathbb C^{m\times m}.\label{trace-norm-Lusin}
\end{align}

\subsubsection{Modulus of Continuity and Integral Continuity}\label{moduli}

\paragraph{Modulus of continuity}
If $f:D\subseteq\mathbb C^k\to\mathbb C$, the modulus of continuity of $f$ is defined as
\[ \displaystyle\omega_f(\delta)=\sup_{\substack{\xx,\yy\in D\\\|\xx-\yy\|\le\delta}}|f(\xx)-f(\yy)|,\qquad\delta>0. \]
Note that the function $\delta\mapsto\omega_f(\delta)$ is monotone non-decreasing on $(0,\infty)$.
What is relevant for our purposes is that $\omega_f(\delta)\to0$ as $\delta\to0$ for every uniformly continuous function $f$ on $D$ (this is actually the definition of a uniformly continuous function). In particular, $\omega_f(\delta)\to0$ as $\delta\to0$ whenever $f$ is continuous on $D$ and $D$ is compact, because any continuous function on a compact set is uniformly continuous by the Heine--Cantor theorem \cite[Theorem~4.19]{Rudinino}.

\paragraph{Modulus of integral continuity}
If $f:D\subseteq\mathbb R^k\to\mathbb C$ is a function in $L^1(D)$, we define the modulus of integral continuity of $f$ as
\[ \displaystyle\omega_f^{\rm int}(\delta)=\sup_{\substack{E\subseteq D\textup{ measurable}\\\mu_k(E)\le\delta}}\int_E|f(\xx)|{\rm d}\xx,\qquad\delta>0. \]
Note that the function $\delta\mapsto\omega_f^{\rm int}(\delta)$ is monotone non-decreasing on $(0,\infty)$.
What is relevant for our purposes is that $\omega_f^{\rm int}(\delta)\to0$ as $\delta\to0$ for every $f\in L^1(D)$. This property is known as the absolute continuity of the Lebesgue integral and is proved below for the reader's convenience. Throughout this work, we denote by $\chi_E$ the characteristic (indicator) function of the set $E$.

\begin{theorem}[\textbf{absolute continuity of the Lebesgue integral}]\label{ac}
Let $f:D\subseteq\mathbb R^k\to\mathbb C$ be a function in $L^1(D)$. Then, for every $\epsilon>0$ there exists $\delta=\delta(\epsilon)>0$ such that
\[ \int_E|f(\xx)|{\rm d}\xx\le\epsilon \]
for every measurable $E\subseteq D$ with $\mu_k(E)\le\delta$.
\end{theorem}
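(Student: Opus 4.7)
The plan is to reduce to the bounded case by truncation and then combine with a monotone/dominated convergence argument for the tail. Concretely, I would split
\[ |f(\xx)| = |f(\xx)|\,\chi_{\{|f|\le M\}}(\xx) + |f(\xx)|\,\chi_{\{|f|>M\}}(\xx) \]
for a parameter $M>0$ to be chosen. On the ``bounded part'' I get a clean estimate $\int_E |f|\,\chi_{\{|f|\le M\}}\,{\rm d}\xx \le M\,\mu_k(E)$, which can be made $\le \epsilon/2$ by requiring $\mu_k(E)\le\delta$ with $\delta=\epsilon/(2M)$. On the ``tail part'' the estimate is uniform in $E$ since $\int_E |f|\,\chi_{\{|f|>M\}}\,{\rm d}\xx \le \int_D |f|\,\chi_{\{|f|>M\}}\,{\rm d}\xx$, so it suffices to show that this tail integral tends to $0$ as $M\to\infty$ and then to fix $M$ large enough to make it $\le \epsilon/2$.

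For the tail, I would observe that the sequence of functions $g_M = |f|\,\chi_{\{|f|>M\}}$ is dominated by the integrable function $|f|$ and satisfies $g_M(\xx)\to 0$ pointwise on $D$ (for every $\xx$ at which $f(\xx)$ is finite, which is a.e.\ since $f\in L^1(D)$: for $M>|f(\xx)|$ the characteristic function vanishes). Hence by the dominated convergence theorem $\int_D g_M\,{\rm d}\xx \to 0$ as $M\to\infty$. Equivalently, one can invoke the monotone convergence theorem applied to the increasing sequence $|f|\,\chi_{\{|f|\le M\}}\uparrow |f|$, obtaining $\int_D |f|\,\chi_{\{|f|\le M\}}\,{\rm d}\xx \to \int_D |f|\,{\rm d}\xx$, which is finite, and then subtracting.

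Putting the two pieces together, given $\epsilon>0$, first choose $M=M(\epsilon)$ so large that $\int_D |f|\,\chi_{\{|f|>M\}}\,{\rm d}\xx \le \epsilon/2$, then set $\delta = \epsilon/(2M)$; for any measurable $E\subseteq D$ with $\mu_k(E)\le\delta$,
\[ \int_E |f(\xx)|\,{\rm d}\xx \le M\,\mu_k(E) + \int_D |f|\,\chi_{\{|f|>M\}}\,{\rm d}\xx \le \frac{\epsilon}{2} + \frac{\epsilon}{2} = \epsilon. \]
There is no real obstacle here: the only mildly delicate point is justifying the vanishing of the tail, which is standard and requires either dominated or monotone convergence, both of which are available since $f\in L^1(D)$. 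An alternative route is to approximate $|f|$ in $L^1$ by a bounded (or simple) function $s$ with $\int_D ||f|-s|\,{\rm d}\xx\le\epsilon/2$ and then apply the same ``$M\,\mu_k(E)$'' bound to $s$; this yields the same conclusion by the density of bounded/simple functions in $L^1$.
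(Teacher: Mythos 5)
Your proof is correct and follows essentially the same argument as the paper: split $|f|$ into a bounded part and a tail $|f|\chi_{\{|f|>M\}}$, show the tail integral vanishes as $M\to\infty$ by dominated convergence, choose $M$ to make the tail $\le\epsilon/2$, and then set $\delta=\epsilon/(2M)$. The alternative routes you mention (monotone convergence, or $L^1$-approximation by simple functions) are also valid, but the paper's proof is the dominated-convergence version you gave first.
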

\begin{proof}
We first note that, by the dominated convergence theorem \cite[Theorem~1.34]{Rudinone},
\[ \lim_{M\to\infty}\int_{\{|f|>M\}}|f(\xx)|{\rm d}\xx=\lim_{M\to\infty}\int_D|f(\xx)|\chi_{\{|f|>M\}}(\xx){\rm d}\xx=0, \]
because $|f(\xx)|\chi_{\{|f|>M\}}(\xx)\to0$ pointwise as $M\to\infty$ and $|f(\xx)|\chi_{\{|f|>M\}}(\xx)\le|f(\xx)|$ for every $\xx\in D$ and every~$M$.
Now, fix $\epsilon>0$. Choose $M=M(\epsilon)>0$ such that
\[ \int_{\{|f|>M\}}|f(\xx)|{\rm d}\xx\le\frac\epsilon2, \]
and take $\delta=\delta(\epsilon)=\dfrac\epsilon{2M}$. Then, for every measurable $E\subseteq D$ such that $\mu_k(E)\le\delta$, we have
\begin{align*}
\int_E|f(\xx)|{\rm d}\xx&=\int_{E\cap\{|f|>M\}}|f(\xx)|{\rm d}\xx+\int_{E\cap\{|f|\le M\}}|f(\xx)|{\rm d}\xx\\
&\le\int_{\{|f|>M\}}|f(\xx)|{\rm d}\xx+\int_{E\cap\{|f|\le M\}}M\hspace{0.5pt}{\rm d}\xx\le\frac\epsilon2+M\delta=\epsilon.\qedhere
\end{align*}
\end{proof}

As a consequence of Theorem~\ref{ac}, given a function $f:D\subseteq\mathbb R^k\to\mathbb C$ belonging to $L^1(D)$, for every $\epsilon>0$ there exists $\hat\delta=\hat\delta(\epsilon)>0$ such that $\omega_f^{\rm int}(\hat\delta)\le\epsilon$. This implies that $\omega_f^{\rm int}(\delta)\le\epsilon$ for every $\delta\le\hat\delta$, because the function $\delta\mapsto\omega_f^{\rm int}(\delta)$ is monotone non-decreasing. We conclude that $\lim_{\delta\to0}\omega_f^{\rm int}(\delta)=0$, as claimed above.

\begin{remark}
The modulus of integral continuity introduced in this section should not be confused with the more famous integral modulus of continuity~\cite{Rees}.
While the latter is useless for our purposes, the former will be used in the context of FE discretizations to obtain spectral distribution results under minimal integrability assumptions on the DE coefficients; see Section~\ref{sec:FEs} (in particular, Theorem~\ref{FE_T1}).
\end{remark}

\subsubsection{GLT Preconditioning}

The next theorem is an important result in the context of GLT preconditioning, but will be used only in Section~\ref{FE_eigp}.
The reader may decide to skip it on first reading and come back here afterwards, just before moving on to Section~\ref{FE_eigp}.

\begin{theorem}\label{exe-preconditioning}
Let $\{A_n\}_n$ be a sequence of Hermitian matrices such that $\{A_n\}_n\sim_{\rm GLT}\kappa$, and let $\{P_n\}_n$ be a sequence of Hermitian positive definite (HPD) matrices such that $\{P_n\}_n\sim_{\rm GLT}\xi$ with $\xi\ne0$ a.e. Then, the sequence of preconditioned matrices $P_n^{-1}A_n$ satisfies
\begin{equation}\label{GLTp-GLT}
\{P_n^{-1}A_n\}_n\sim_{\rm GLT}\xi^{-1}\kappa,
\end{equation}
and
\begin{equation}\label{GLTp-sigla}
\{P_n^{-1}A_n\}_n\sim_{\sigma,\lambda}\xi^{-1}\kappa.
\end{equation}
\end{theorem}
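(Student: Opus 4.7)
The statement \eqref{GLTp-GLT} and the singular-value half of \eqref{GLTp-sigla} follow directly from the *-algebra structure of GLT sequences. Since each $P_n$ is HPD and hence invertible, $P_n^\dag = P_n^{-1}$, so \textbf{GLT5} applied to $\{P_n\}_n \sim_{\rm GLT} \xi$ (using $\xi \ne 0$ a.e.) gives $\{P_n^{-1}\}_n \sim_{\rm GLT} \xi^{-1}$, and \textbf{GLT4} (product) then yields $\{P_n^{-1}A_n\}_n \sim_{\rm GLT} \xi^{-1}\kappa$. The singular-value distribution $\{P_n^{-1}A_n\}_n \sim_\sigma \xi^{-1}\kappa$ is immediate from \textbf{GLT1}.

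For the spectral half of \eqref{GLTp-sigla}, I would handle the non-Hermitianness of $P_n^{-1}A_n$ via a similarity trick: $P_n^{-1}A_n = P_n^{-1/2}H_nP_n^{1/2}$ with $H_n := P_n^{-1/2}A_nP_n^{-1/2}$, which is Hermitian and shares the eigenvalues of $P_n^{-1}A_n$. Thus $\{P_n^{-1}A_n\}_n \sim_\lambda \xi^{-1}\kappa$ if and only if $\{H_n\}_n \sim_\lambda \xi^{-1}\kappa$, and by \textbf{GLT1} (Hermitian case) it suffices to prove $\{H_n\}_n \sim_{\rm GLT} \xi^{-1}\kappa$. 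Two applications of \textbf{GLT4} reduce this in turn to the single GLT identification $\{P_n^{-1/2}\}_n \sim_{\rm GLT} \xi^{-1/2}$.

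The main obstacle is precisely this last reduction, since $z \mapsto z^{-1/2}$ has no continuous extension to all of $\mathbb{C}$ and so \textbf{GLT6} cannot be invoked directly on the inverse square root. My plan is to apply \textbf{GLT6} to the square root first and then invert. First, \textbf{GLT1} applied to the Hermitian (HPD) sequence $\{P_n\}_n$ gives $\{P_n\}_n \sim_\lambda \xi$, and since $\Lambda(P_n) \subset (0,\infty)$ for every $n$, property \textbf{S1} forces $\xi \in [0,\infty)$ a.e.; combined with $\xi \ne 0$ a.e., this gives $\xi > 0$ a.e. Now define $f:\mathbb{C}\to\mathbb{C}$ by $f(z) = \sqrt{\max(\mathrm{Re}(z),0)}$; this function is continuous on all of $\mathbb{C}$, it satisfies $f(P_n) = P_n^{1/2}$ by the spectral formula \eqref{A->f(A)} (since $\Lambda(P_n) \subset (0,\infty)$), and it satisfies $f(\xi) = \xi^{1/2}$ a.e. By \textbf{GLT6}, $\{P_n^{1/2}\}_n \sim_{\rm GLT} \xi^{1/2}$, and since $\xi^{1/2} \ne 0$ a.e., a second application of \textbf{GLT5} yields $\{P_n^{-1/2}\}_n \sim_{\rm GLT} \xi^{-1/2}$. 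Two uses of \textbf{GLT4} then deliver $\{H_n\}_n \sim_{\rm GLT} \xi^{-1}\kappa$, concluding the proof. Everything apart from this continuous-extension step is a mechanical application of the toolbox \textbf{GLT1}--\textbf{GLT7}.
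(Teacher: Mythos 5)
Your proof is correct and follows essentially the same route as the paper's: the similarity $P_n^{-1}A_n \sim P_n^{-1/2}A_nP_n^{-1/2}$, an application of \textbf{GLT6} with a continuous extension of the square root (you use $f(z)=\sqrt{\max(\mathrm{Re}(z),0)}$, the paper uses $f(\lambda)=|\lambda|^{1/2}$), \textbf{GLT5} for the inverse, and \textbf{S1}/\textbf{GLT1} to establish $\xi\ge0$ a.e.\ so that the auxiliary function agrees with $\xi^{1/2}$ a.e. The only cosmetic difference is that you pin down $\xi>0$ a.e.\ at the outset whereas the paper carries $|\xi|$ through and identifies $|\xi|^{-1}\kappa=\xi^{-1}\kappa$ at the end.
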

\begin{proof}
The GLT relation \eqref{GLTp-GLT} 
is a direct consequence of {\bf GLT4}--{\bf GLT5}. The singular value distribution in \eqref{GLTp-sigla} 
follows immediately from \eqref{GLTp-GLT} and {\bf GLT1}. The only difficult part is to prove the spectral distribution in \eqref{GLTp-sigla}. 
Since $P_n$ is HPD, the eigenvalues of $P_n$ are positive and the matrices $P_n^{1/2}$ and $P_n^{-1/2}$ are well-defined as functions of $P_n$. Indeed, we have $P_n^{1/2}=\varphi(P_n)$ with $\varphi(\lambda)=\lambda^{1/2}$ and $P_n^{-1/2}=\psi(P_n)$ with $\psi(\lambda)=\lambda^{-1/2}$, where $\varphi$ and $\psi$ are well-defined on $\Lambda(P_n)\subset(0,\infty)$. Note that $P_n^{1/2}P_n^{-1/2}=I_n$ and both $P_n^{1/2}$ and $P_n^{-1/2}$ are HPD.\,\footnote{\,These properties follow from \eqref{A->f(A)} applied with $A=P_n$ and with a unitary matrix $V$, which is possible because $P_n$ is HPD.}
Moreover, we have
\begin{equation}\label{sim.sim}
P_n^{-1}A_n\sim P_n^{1/2}(P_n^{-1}A_n)P_n^{-1/2}=P_n^{-1/2}A_nP_n^{-1/2},
\end{equation}
where $X\sim Y$ means that $X$ is similar to $Y$. The good news is that $P_n^{-1/2}A_nP_n^{-1/2}$ is Hermitian, because $P_n^{-1/2}$ and $A_n$ are both Hermitian. By {\bf GLT4}--{\bf GLT5} and {\bf GLT6} applied with $f(\lambda)=|\lambda|^{1/2}$, we have
\begin{align}
\{P_n^{1/2}\}_n=\{f(P_n)\}_n&\sim_{\rm GLT}f(\xi)=|\xi|^{1/2},\notag\\
\{P_n^{-1/2}\}_n=\{(P_n^{1/2})^{-1}\}_n&\sim_{\rm GLT}(|\xi|^{1/2})^{-1}=|\xi|^{-1/2},\notag\\
\{P_n^{-1/2}A_nP_n^{-1/2}\}_n&\sim_{\rm GLT}|\xi|^{-1/2}\kappa|\xi|^{-1/2}=|\xi|^{-1}\kappa.\label{illabel}
\end{align}
Note that $\xi\ge0$ a.e.\ by {\bf S1}, since $\{P_n\}_n\sim_\lambda\xi$ by {\bf GLT1} and $\Lambda(P_n)\subset(0,\infty)$ because $P_n$ is HPD.
Hence, $|\xi|^{-1}\kappa=\xi^{-1}\kappa$ a.e., and we infer from \eqref{illabel} and the definition of GLT sequences (Definition~\ref{def:GLT}) that
\[ \{P_n^{-1/2}A_nP_n^{-1/2}\}_n\sim_{\rm GLT}\xi^{-1}\kappa. \]
Since $P_n^{-1/2}A_nP_n^{-1/2}$ is Hermitian, {\bf GLT1} yields
\[ \{P_n^{-1/2}A_nP_n^{-1/2}\}_n\sim_\lambda\xi^{-1}\kappa. \]
Thus, by the similarity \eqref{sim.sim}, $\{P_n^{-1}A_n\}_n\sim_\lambda\xi^{-1}\kappa$.
\end{proof}

\subsubsection{Arrow-Shaped Sampling Matrices}\label{assm}
If $n\in\mathbb N$, $a:[0,1]\to\mathbb C$ and $\mathcal G_n=\{x_{i,n}\}_{i=1,\ldots,n}$ is a grid of $n$ points in $[0,1]$, the $n$th arrow-shaped sampling matrix generated by $a$ corresponding to the grid $\mathcal G_n$ is the $n\times n$ symmetric matrix given by 
\begin{equation}\label{a-s}
(S_n^{\mathcal G_n}(a))_{ij}=(D_n^{\mathcal G_n}(a))_{\min(i,j),\min(i,j)}=a(x_{\min(i,j),n}),\qquad i,j=1,\ldots,n,
\end{equation}
i.e.,
\begin{equation*} 
S_n^{\mathcal G_n}(a)=\begin{bmatrix}
a(x_{1,n}) & a(x_{1,n}) & a(x_{1,n}) & \cdots & \ \cdots & a(x_{1,n})\\[5pt]
a(x_{1,n}) & a(x_{2,n}) & a(x_{2,n}) & \cdots & \ \cdots & a(x_{2,n})\\[5pt]
a(x_{1,n}) & a(x_{2,n}) & a(x_{3,n}) & \cdots & \ \cdots & a(x_{3,n})\\
\vdots & \vdots & \vdots & \ddots & & \vdots\\
\vdots & \vdots & \vdots & & \ \ddots & \vdots\\
a(x_{1,n}) & a(x_{2,n}) & a(x_{3,n}) & \cdots & \ \cdots & a(x_{n,n})
\end{bmatrix}.
\end{equation*}
The name is due to the fact that, if we imagine to color the matrix $S_n^{\mathcal G_n}(a)$ by assigning the color $i$ to the entries $a(x_{i,n})$, the resulting picture looks like a sort of arrow pointing toward the upper left corner. In the special case where $\mathcal G_n=\{\frac in\}_{i=1,\ldots,n}$, we write $S_n(a)$ instead of $S_n^{\mathcal G_n}(a)$.
Throughout this work, if $X,Y\in\mathbb C^{n\times n}$, we denote by $X\circ Y$ the componentwise (Hadamard) product of $X$ and $Y$: 
\[ (X\circ Y)_{ij}=X_{ij}Y_{ij},\qquad i,j=1,\ldots,n. \]
Moreover, if $f:D\to\mathbb C$ is any function, we set $\|f\|_\infty=\sup_{\xi\in D}|f(\xi)|$. 
Note that $\|f\|_\infty<\infty$ if and only if $f$ is bounded over its domain $D$.

\begin{theorem}\label{Dtilde_lemma}
Let $a:[0,1]\to\mathbb C$ be continuous, let $f$ be a trigonometric polynomial of degree $\le r$, and let $\mathcal G_n=\{x_{i,n}\}_{i=1,\ldots,n}$ be an a.u.\ grid in $[0,1]$. Then, for every $n\in\mathbb N$, we have
\begin{equation}\label{DtD}
\|S_n^{\mathcal G_n}(a)\circ T_n(f)-D_n^{\mathcal G_n}(a)T_n(f)\|_2\le r^{1/2}\|f\|_\infty n^{1/2}\omega_a\Bigl(\frac{r}{n}+2m(\mathcal G_n)\Bigr)=o(n^{1/2}),
\end{equation}
and
\begin{equation}\label{annex2}
\{S_n^{\mathcal G_n}(a)\circ T_n(f)\}_n\sim_{\rm GLT}a(x)f(\theta).
\end{equation}
\end{theorem}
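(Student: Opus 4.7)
The plan is to prove the Frobenius-norm estimate \eqref{DtD} by a direct entrywise computation, and then to obtain the GLT relation \eqref{annex2} by writing $S_n^{\mathcal G_n}(a)\circ T_n(f)$ as $D_n^{\mathcal G_n}(a)T_n(f)$ plus a zero-distributed perturbation.

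First I will compute the entries of the difference matrix $E_n := S_n^{\mathcal G_n}(a)\circ T_n(f) - D_n^{\mathcal G_n}(a)T_n(f)$. By definition of $S_n^{\mathcal G_n}(a)$ in \eqref{a-s} and the standard form of $T_n(f)$ and $D_n^{\mathcal G_n}(a)$, we get $(E_n)_{ij} = \bigl(a(x_{\min(i,j),n}) - a(x_{i,n})\bigr) f_{i-j}$. The key structural observation is that this vanishes whenever $i \le j$ (since then $\min(i,j) = i$) and also whenever $|i-j| > r$ (since $f$ has degree $\le r$, so $f_{i-j}=0$). Hence $E_n$ is supported on the sub-diagonal band $\{(i,j): 1 \le i-j \le r\}$, which contains at most $rn$ entries.

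Next I will bound each surviving entry. Using $|f_k| \le \|f\|_\infty$ for every Fourier coefficient (immediate from the defining integral) and the fact that $|x_{j,n} - x_{i,n}| \le |i-j|/n + 2m(\mathcal G_n) \le r/n + 2m(\mathcal G_n)$ for $1 \le i-j \le r$, I get $|(E_n)_{ij}| \le \|f\|_\infty\,\omega_a(r/n + 2m(\mathcal G_n))$. Summing $|(E_n)_{ij}|^2$ over the at most $rn$ nonzero positions and taking the square root yields the Frobenius-norm bound in \eqref{DtD}. The $o(n^{1/2})$ tail of \eqref{DtD} then follows because $a$ is continuous on the compact set $[0,1]$, hence uniformly continuous by Heine--Cantor, so $\omega_a(\delta)\to 0$ as $\delta\to 0$, while $r/n + 2m(\mathcal G_n)\to 0$ since $\mathcal G_n$ is a.u.

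For \eqref{annex2}, I will combine \eqref{DtD} with the GLT tool-kit. By \textbf{GLT3} and \textbf{GLT4}, the product $\{D_n^{\mathcal G_n}(a)T_n(f)\}_n$ is GLT with symbol $a(x)f(\theta)$. The difference $E_n$ satisfies $\|E_n\|_2 = o(n^{1/2})$, so \textbf{Z2} with $p=2$ gives $\{E_n\}_n\sim_\sigma 0$, and then the third bullet of \textbf{GLT3} gives $\{E_n\}_n\sim_{\rm GLT}0$. Adding the two GLT sequences via the linearity bullet of \textbf{GLT4} yields $\{S_n^{\mathcal G_n}(a)\circ T_n(f)\}_n\sim_{\rm GLT} a(x)f(\theta)$, as desired.

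The only mildly delicate step is the first one: one must not overlook that $E_n$ is identically zero on and above the main diagonal (this is what saves a factor of $n$ in the entry count and is responsible for the sharp $n^{1/2}$ rate) and that the degree bound on $f$ confines the remaining nonzero entries to a band of width $r$. Everything else is routine bookkeeping plus direct appeals to \textbf{GLT3}, \textbf{GLT4}, and \textbf{Z2}.
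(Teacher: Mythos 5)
Your proof is correct and follows essentially the same route as the paper's: you identify that the difference $E_n = S_n^{\mathcal G_n}(a)\circ T_n(f) - D_n^{\mathcal G_n}(a)T_n(f)$ is supported on the strictly lower-triangular band of width $r$, bound each entry by $\|f\|_\infty\,\omega_a(r/n + 2m(\mathcal G_n))$ via the same triangle-inequality estimate on $|x_{i,n}-x_{j,n}|$, sum over the $\le rn$ nonzero positions to get \eqref{DtD}, and then deduce \eqref{annex2} from \textbf{Z2}, \textbf{GLT3}, and \textbf{GLT4} exactly as in the paper. The only cosmetic difference is that you bound each entry uniformly and multiply by the count, whereas the paper writes out the double sum $\sum_{k=1}^r\sum_{\ell=1}^{n-k}$ and factors out $|f_k|^2$; the resulting estimate is identical.
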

\begin{proof}
It suffices to prove \eqref{DtD}.
Indeed, \eqref{DtD} implies that
\[ \{S_n^{\mathcal G_n}(a)\circ T_n(f)-D_n^{\mathcal G_n}(a)T_n(f)\}_n\sim_\sigma0 \]
by {\bf Z2} and 
\[ \{S_n^{\mathcal G_n}(a)\circ T_n(f)-D_n^{\mathcal G_n}(a)T_n(f)\}_n\sim_{\rm GLT}0 \]
by {\bf GLT3}. Hence, the GLT relation \eqref{annex2} follows from {\bf GLT3}--{\bf GLT4} and the decomposition
\[ S_n^{\mathcal G_n}(a)\circ T_n(f)=D_n^{\mathcal G_n}(a)T_n(f)+(S_n^{\mathcal G_n}(a)\circ T_n(f)-D_n^{\mathcal G_n}(a)T_n(f)). \]
Let us prove \eqref{DtD}. A direct comparison between
\[ S_n^{\mathcal G_n}(a)\circ T_n(f)\hspace{-1pt}=\hspace{-3pt}\begin{bmatrix}
a_1f_0 & a_1f_{-1} & \cdots & a_1f_{-r} & & & & & \\
a_1f_1 & \ddots & \ddots & & \ddots & & & & \\
\vdots & \ddots & \ddots & \ddots & & \ddots & & & \\
a_1f_r & & \ddots & \ddots & \ddots & & \ddots & & \\
& \ddots & & \ddots & \ddots & \ddots & & \ddots & \\
& & \ddots & & \ddots & \ddots & \ddots & & a_{n-r}f_r\\ 
& & & \ddots & & \ddots & \ddots & \ddots & \vdots\\
& & & & \ddots & & \ddots & \ddots & a_{n-1}f_{-1}\\
& & & & & a_{n-r}f_r & \cdots & a_{n-1}f_1 & a_nf_0
\end{bmatrix} \]
and 
\[ D_n^{\mathcal G_n}(a)T_n(f)\hspace{-1pt}=\hspace{-3pt}\begin{bmatrix}
a_1f_0 & a_1f_{-1} & \cdots & a_1f_{-r} & & & & & \\
a_2f_1 & \ddots & \ddots & & \ddots & & & & \\
\vdots & \ddots & \ddots & \ddots & & \ddots & & & \\
a_{r+1}f_r & & \ddots & \ddots & \ddots & & \ddots & & \\
& \ddots & & \ddots & \ddots & \ddots & & \ddots & \\
& & \ddots & & \ddots & \ddots & \ddots & & a_{n-r}f_r\\ 
& & & \ddots & & \ddots & \ddots & \ddots & \vdots\\
& & & & \ddots & & \ddots & \ddots & a_{n-1}f_{-1}\\
& & & & & a_nf_r & \cdots & a_nf_1 & a_nf_0
\end{bmatrix}, \]
where $a_i=a(x_{i,n})$ for all $i=1,\ldots,n$, shows that these two matrices have the same main diagonal and the same upper triangular part, meaning that the only nonzero entries of their difference are located in the (strictly) lower triangular part. Moreover, only the first $r$ diagonals of this lower triangular part can be nonzero, because $f$ is a trigonometric polynomial of degree $\le r$ and so its Fourier coefficients $f_k$ are zero for $k>r$. Thus,
\begin{align*}
\|S_n^{\mathcal G_n}(a)\circ T_n(f)-D_n^{\mathcal G_n}(a)T_n(f)\|_2^2&=\sum_{k=1}^r\sum_{\ell=1}^{n-k}\bigl|(S_n^{\mathcal G_n}(a)\circ T_n(f))_{k+\ell,\ell}-(D_n^{\mathcal G_n}(a)T_n(f))_{k+\ell,\ell}\bigr|^2\\
&=\sum_{k=1}^r\sum_{\ell=1}^{n-k}\bigl|a_\ell f_k-a_{k+\ell}f_k\bigr|^2=\sum_{k=1}^r|f_k|^2\sum_{\ell=1}^{n-k}\bigl|a(x_{\ell,n})-a(x_{k+\ell,n})\bigr|^2\\
&\le r\|f\|_\infty^2n\,\omega_a\Bigl(\frac rn+2m(\mathcal G_n)\Bigr)^2,
\end{align*}
where the last inequality follows from two observations:
\begin{itemize}[nolistsep,leftmargin=*]
	\item for every $k\in\mathbb Z$, the Fourier coefficients of $f$ satisfy
	\[ |f_k|=\left|\frac1{2\pi}\int_{-\pi}^\pi f(\theta){\rm e}^{-{\rm i}k\theta}{\rm d}\theta\right|\le\frac1{2\pi}\int_{-\pi}^\pi|f(\theta)|{\rm d}\theta\le\frac1{2\pi}\int_{-\pi}^\pi\|f\|_{\infty}{\rm d}\theta=\|f\|_\infty; \]
	\item for every $k=1,\ldots,r$ and every $\ell=1,\ldots,n-k$, we have
	\[ |x_{\ell,n}-x_{k+\ell,n}|\le\left|x_{\ell,n}-\frac\ell n\right|+\left|\frac\ell n-\frac{k+\ell}n\right|+\left|\frac{k+\ell}n-x_{k+\ell,n}\right|\le m(\mathcal G_n)+\frac rn+m(\mathcal G_n). \tag*{\qedhere} \]
\end{itemize}
\end{proof}

\subsection{FD Discretization of DEs}\label{sec:FDs}

\subsubsection{FD Discretization of Diffusion Equations}\label{1D-case}

Consider the following second-order differential problem:
\begin{equation}\label{1d.pde}
\left\{\begin{aligned}
&-(a(x)u'(x))'=f(x), &&\quad x\in(0,1),\\[3pt]
&u(0)=\alpha,\quad u(1)=\beta.
\end{aligned}\right.
\end{equation}

\paragraph{FD discretization}
We consider the discretization of \eqref{1d.pde} by the classical second-order central FD scheme on a uniform grid. In the case where $a(x)$ is constant, this is also known as the $(-1,2,-1)$ scheme. Let us describe it shortly; for more details on FD methods, we refer the reader to the available literature (see \cite{Smith} or any good book on FDs). Choose a discretization parameter $n\in\mathbb N$, set $h=\frac1{n+1}$ and $x_j=jh$ for all $j\in[0,n+1]$. For $j=1,\ldots,n$, we approximate $-(a(x)u'(x))'|_{x=x_j}$ by the classical second-order central FD formula:
\begin{align}\label{FD-formula}
-(a(x)u'(x))'|_{x=x_j}&\approx-\frac{a(x_{j+\frac1{2^{\vphantom{\mbox{\tiny 1}}}}})u'(x_{j+\frac1{2^{\vphantom{\mbox{\tiny 1}}}}})-a(x_{j-\frac1{2^{\vphantom{\mbox{\tiny 1}}}}})u'(x_{j-\frac1{2^{\vphantom{\mbox{\tiny 1}}}}})}{h}\notag\\[5pt]
&\approx-\frac{a(x_{j+\frac1{2^{\vphantom{\mbox{\tiny 1}}}}})\dfrac{u(x_{j+1})-u(x_j)}{h}-a(x_{j-\frac1{2^{\vphantom{\mbox{\tiny 1}}}}})\dfrac{u(x_j)-u({x_{j-1}})}{h}}{h}\notag\\[5pt]
&=\frac{-a(x_{j+\frac1{2^{\vphantom{\mbox{\tiny 1}}}}})u(x_{j+1})+\bigl(a(x_{j+\frac1{2^{\vphantom{\mbox{\tiny 1}}}}})+a(x_{j-\frac1{2^{\vphantom{\mbox{\tiny 1}}}}})\bigr)u(x_j)-a(x_{j-\frac1{2^{\vphantom{\mbox{\tiny 1}}}}})u(x_{j-1})}{h^2}.
\end{align}
This means that the values of the solution $u$ at the nodes $x_j$, $j=1,\ldots,n$, satisfy (approximately) the following linear system:
\begin{equation*}
-a(x_{j+\frac1{2^{\vphantom{\mbox{\tiny 1}}}}})u(x_{j+1})+\bigl(a(x_{j+\frac1{2^{\vphantom{\mbox{\tiny 1}}}}})+a(x_{j-\frac1{2^{\vphantom{\mbox{\tiny 1}}}}})\bigr)u(x_j)-a(x_{j-\frac1{2^{\vphantom{\mbox{\tiny 1}}}}})u(x_{j-1})=h^2f(x_j),\qquad j=1,\ldots,n.
\end{equation*}
We therefore approximate 
the nodal value $u(x_j)$ with the value $u_j$ 
for $j=0,\ldots,n+1$, where $u_0=\alpha$, $u_{n+1}=\beta$, and $\mathbf{u}=(u_1,\ldots,u_n)^T$ solves
\begin{equation}\label{j-form}
-a(x_{j+\frac1{2^{\vphantom{\mbox{\tiny1}}}}})u_{j+1}+\bigl(a(x_{j+\frac1{2^{\vphantom{\mbox{\tiny 1}}}}})+a(x_{j-\frac1{2^{\vphantom{\mbox{\tiny 1}}}}})\bigr)u_j-a(x_{j-\frac1{2^{\vphantom{\mbox{\tiny 1}}}}})u_{j-1}=h^2f(x_j),\qquad j=1,\ldots,n.
\end{equation}
The matrix of the linear system \eqref{j-form} is the $n\times n$ tridiagonal symmetric matrix given by
\begin{equation}\label{An}
A_n = \begin{bmatrix}
a_{\frac1{2^{\vphantom{\mbox{\tiny 1}}}}}+a_{\frac3{2^{\vphantom{\mbox{\tiny 1}}}}} & \ \ -a_{\frac3{2^{\vphantom{\mbox{\tiny 1}}}}} & \ \ \ \ & \ \ \ \ & \ \ \ \ \\[10pt]
-a_{\frac3{2^{\vphantom{\mbox{\tiny 1}}}}} & \ \ a_{\frac3{2^{\vphantom{\mbox{\tiny 1}}}}}+a_{\frac5{2^{\vphantom{\mbox{\tiny 1}}}}} & \ \ \ \ -a_{\frac5{2^{\vphantom{\mbox{\tiny 1}}}}} & \ \ \ \ & \ \ \ \ \\[10pt]
& \ \ -a_{\frac5{2^{\vphantom{\mbox{\tiny 1}}}}} & \ \ \ \ \ddots & \ \ \ \ \ddots & \ \ \ \ \\[10pt]
& \ \ & \ \ \ \ \ddots & \ \ \ \ \ddots & \ \ \ \ -a_{n-\frac1{2^{\vphantom{\mbox{\tiny 1}}}}} \\[10pt]
& \ \ & \ \ \ \ & \ \ \ \ -a_{n-\frac1{2^{\vphantom{\mbox{\tiny 1}}}}} & \ \ \ \ a_{n-\frac1{2^{\vphantom{\mbox{\tiny 1}}}}}+a_{n+\frac1{2^{\vphantom{\mbox{\tiny 1}}}}}
\end{bmatrix},
\end{equation}
where $a_i=a(x_i)$ for all $i\in[0,n+1]$. 

\paragraph{GLT spectral analysis of the FD discretization matrices}
We are going to see that the theory of GLT sequences allows one to compute the singular value and spectral distribution of the sequence of FD discretization matrices $\{A_n\}_n$ under a very weak hypothesis on the coefficient $a(x)$. 

\begin{theorem}\label{FD_T1}
If $a:[0,1]\to\mathbb R$ is continuous a.e.\ then
\begin{equation}\label{diffGLT}
\{A_n\}_n\sim_{\rm GLT}a(x)(2-2\cos\theta)
\end{equation}
and
\begin{equation}\label{diffsigla}
\{A_n\}_n\sim_{\sigma,\lambda}a(x)(2-2\cos\theta).
\end{equation}
\end{theorem}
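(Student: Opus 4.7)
The plan is to write $A_n$ as a short polynomial expression in elementary ``GLT building blocks'' and then invoke the algebra structure captured by \textbf{GLT3}--\textbf{GLT4}. I would introduce the two staggered diagonal sampling matrices
\[ D_n^\pm(a) = D_n^{\mathcal G_n^\pm}(a), \qquad \mathcal G_n^\pm = \Bigl\{\frac{2j\pm 1}{2(n+1)}\Bigr\}_{j=1}^n \subset [0,1], \]
which collect the midpoint values $a_{j\pm 1/2}$ that appear in \eqref{An}. A short calculation gives $m(\mathcal G_n^\pm) \le 3/(2(n+1)) \to 0$, so both grids are asymptotically uniform in $[0,1]$. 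Letting $L_n = T_n({\rm e}^{{\rm i}\theta})$ and $U_n = T_n({\rm e}^{-{\rm i}\theta}) = L_n^T$ denote the $n\times n$ subdiagonal and superdiagonal shift matrices, an entry-by-entry comparison with the main, sub- and super-diagonals of \eqref{An} yields the exact identity
\[ A_n = D_n^-(a) + D_n^+(a) - L_n D_n^+(a) - D_n^+(a) U_n. \]

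With this decomposition in hand, the GLT relation \eqref{diffGLT} is essentially immediate. By the first two bullets of \textbf{GLT3}, the hypothesis that $a$ is continuous a.e., together with the a.u.\ property of $\mathcal G_n^\pm$, gives $\{D_n^\pm(a)\}_n \sim_{\rm GLT} a(x)$, while the Toeplitz bullet of \textbf{GLT3} applied to the $L^1$ functions ${\rm e}^{\pm{\rm i}\theta}$ gives $\{L_n\}_n \sim_{\rm GLT} {\rm e}^{{\rm i}\theta}$ and $\{U_n\}_n \sim_{\rm GLT} {\rm e}^{-{\rm i}\theta}$. Applying the $*$-algebra property \textbf{GLT4} to the decomposition above then produces
\[ \{A_n\}_n \sim_{\rm GLT} a(x) + a(x) - {\rm e}^{{\rm i}\theta} a(x) - a(x) {\rm e}^{-{\rm i}\theta} = a(x)(2-2\cos\theta), \]
which is \eqref{diffGLT}. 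For \eqref{diffsigla}, it suffices to observe that each $A_n$ is real symmetric (because $a$ is real-valued); the Hermitian clause of \textbf{GLT1} then converts the GLT relation just obtained into both the singular value and the spectral distribution with the same symbol.

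The only real work lies in the matrix identity, which reduces to matching the three nonzero diagonals of \eqref{An} with the contribution of each summand on the right-hand side. I do not foresee a genuine obstacle: since $\mathcal G_n^\pm$ are asymptotically uniform and $a$ is only assumed continuous a.e., \textbf{GLT3} covers the two sampling factors directly, so no separate approximation-in-measure argument through \textbf{GLT7} is needed, and Theorem~\ref{Dtilde_lemma} can be bypassed altogether. In this sense, the classical second-order FD scheme is perfectly tailored to the GLT framework: its discretization matrix decomposes into a single sum-and-product of diagonal sampling matrices and banded Toeplitz matrices, after which \textbf{GLT3}--\textbf{GLT4} and the Hermitian part of \textbf{GLT1} do all the work.
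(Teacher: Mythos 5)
Your proposal is correct and takes essentially the same approach as the paper. The paper writes $A_n = D_n^+T_n(1-{\rm e}^{-{\rm i}\theta}) + D_n^-T_n(1-{\rm e}^{{\rm i}\theta})$, which is just an algebraic rearrangement of your decomposition (indeed $D_n^-T_n({\rm e}^{{\rm i}\theta}) = T_n({\rm e}^{{\rm i}\theta})D_n^+$, so the two expressions are identical entry by entry); both then invoke \textbf{GLT3}--\textbf{GLT4} on the a.u.\ staggered grids and conclude \eqref{diffsigla} from \textbf{GLT1} using the symmetry of $A_n$.
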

\begin{proof}
It suffices to prove \eqref{diffGLT} because \eqref{diffsigla} follows from \eqref{diffGLT} and {\bf GLT1} as the matrices $A_n$ are symmetric.
Write
\[ A_n=D_n^+K_n^++D_n^-K_n^-, \]
where
\begin{align*}
K_n^+&= \left[\begin{array}{ccccc}
1 & -1 & & & \\
& 1 & -1 & & \\
& & \ddots & \ddots & \\
& & & 1 & -1\\
& & & & 1
\end{array}\right]=T_n(1-{\rm e}^{-{\rm i}\theta}), &K_n^-&=\left[\begin{array}{ccccc}
1 & & & & \\
-1 & 1 & & & \\
& \ddots & \ddots & & \\
& & -1 & 1 & \\
& & & -1 & 1
\end{array}\right]=T_n(1-{\rm e}^{{\rm i}\theta}),\\[5pt]
D_n^+&=\mathop{\rm diag}_{j=1,\ldots,n}a_{j+\frac1{2^{\vphantom{\mbox{\tiny 1}}}}}=\mathop{\rm diag}_{j=1,\ldots,n}a(x_{j+\frac1{2^{\vphantom{\mbox{\tiny 1}}}}}), &D_n^-&=\mathop{\rm diag}_{j=1,\ldots,n}a_{j-\frac1{2^{\vphantom{\mbox{\tiny 1}}}}}=\mathop{\rm diag}_{j=1,\ldots,n}a(x_{j-\frac1{2^{\vphantom{\mbox{\tiny 1}}}}}).\notag
\end{align*}
It is easy to check that the grids $\mathcal G_n^+=\{x_{j+\frac1{2^{\vphantom{\mbox{\tiny 1}}}}}\}_{j=1,\ldots,n}$ and $\mathcal G_n^-=\{x_{j-\frac1{2^{\vphantom{\mbox{\tiny 1}}}}}\}_{j=1,\ldots,n}$ are a.u.\ in $[0,1]$. Hence, by {\bf GLT3}, $\{D_n^+\}_n\sim_{\rm GLT}a(x)$ and $\{D_n^-\}_n\sim_{\rm GLT}a(x)$. We then infer from {\bf GLT3}--{\bf GLT4} that
\begin{equation*}
\{A_n\}_n\sim_{\rm GLT}a(x)(1-{\rm e}^{-{\rm i}\theta})+a(x)(1-{\rm e}^{{\rm i}\theta})=a(x)(2-2\cos\theta). \tag*{\qedhere}
\end{equation*}
\end{proof}

\begin{remark}[\textbf{formal structure of the symbol}]\label{oss1}
From a formal point of view (i.e., disregarding the regularity of $a(x)$ and $u(x)$), problem \eqref{1d.pde} can be rewritten in the form
\begin{equation*} 
\left\{\begin{aligned}
&-a(x)u''(x)-a'(x)u'(x)=f(x), &&\quad x\in(0,1),\\[3pt]
&u(0)=\alpha,\quad u(1)=\beta.
\end{aligned}\right.
\end{equation*}
From this reformulation, it appears more clearly that the symbol $a(x)(2-2\cos\theta)$ consists of two ``ingredients'':
\begin{itemize}[nolistsep,leftmargin=*]
	\item The coefficient of the higher-order differential operator, namely $a(x)$, in the physical variable $x$. To make a parallelism with H\"ormander's theory \cite{hormander}, the higher-order differential operator $-a(x)u''(x)$ is the so-called principal symbol of the complete differential operator $-a(x)u''(x)-a'(x)u'(x)$ and $a(x)$ is then the coefficient of the principal symbol.
	\item The trigonometric polynomial associated with the FD formula $(-1,2,-1)$ used to approximate the higher-order derivative $-u''(x)$, namely $2-2\cos\theta={-{\rm e}^{-{\rm i}\theta}+2-{\rm e}^{{\rm i}\theta}}$, in the Fourier variable $\theta$. To see that $(-1,2,-1)$ is precisely the FD formula used to approximate $-u''(x)$, simply imagine $a(x)=1$ and note that in this case the FD formula \eqref{FD-formula} becomes
	\[ -u''(x_j)\approx\frac{-u(x_{j+1})+2u(x_j)-u(x_{j-1})}{h^2}, \]
	i.e., the FD formula $(-1,2,-1)$ to approximate $-u''(x_j)$.
\end{itemize}
We observe that the term $-a'(x)u'(x)$, which only depends on lower-order derivatives of $u(x)$, does not enter the expression of the symbol.
\end{remark}

\begin{remark}[\textbf{nonnegativity and order of the zero at $\boldsymbol{\theta=0}$}]\label{oss1.1}
The trigonometric polynomial $2-2\cos\theta$ is nonnegative on $[-\pi,\pi]$ and it has a unique zero of order 2 at $\theta=0$, because 
\[ \lim_{\theta\to0}\frac{2-2\cos\theta}{\theta^2}=1. \]
This reflects the fact that the associated FD formula $(-1,2,-1)$ approximates $-u''(x)$, which is a differential operator of order 2 (it is also nonnegative on the space of functions $v\in C^2([0,1])$ such that $v(0)=v(1)=0$, in the sense that $\int_0^1-v''(x)v(x){\rm d} x=\int_0^1(v'(x))^2{\rm d} x\ge0$ for all such $v$).
\end{remark}

\begin{example}\label{exa}
\begin{figure}
\centering
\begin{minipage}{0.6\textwidth}
\centering
\includegraphics[width=\textwidth]{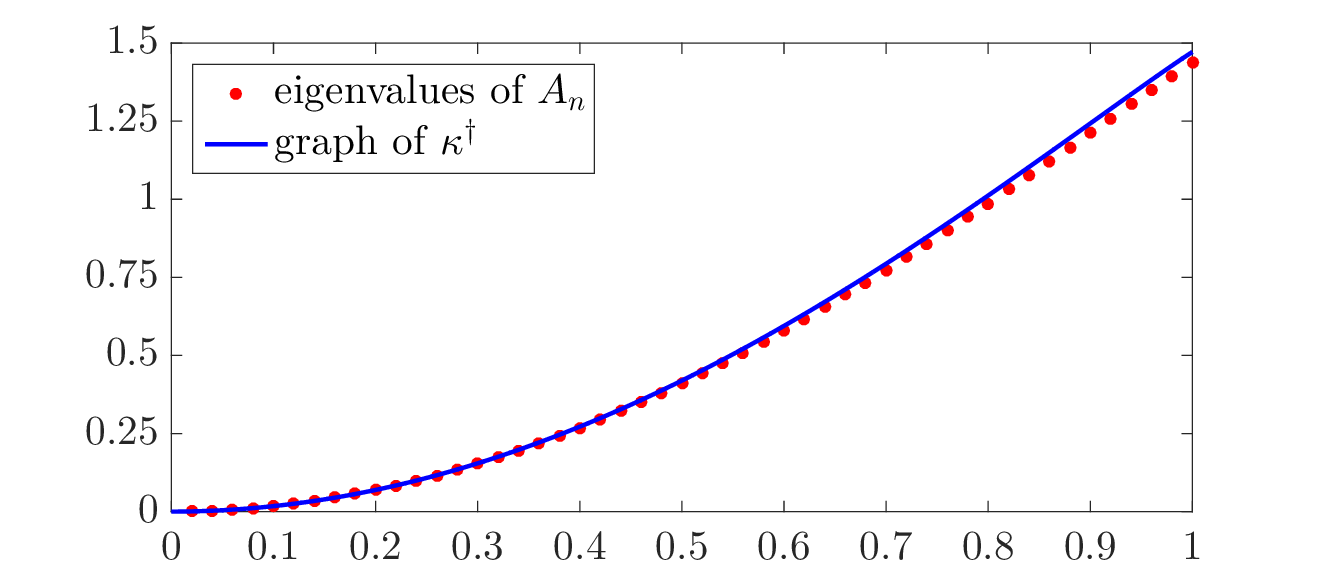}
\caption{Example~\ref{exa}: Comparison between the spectrum of $A_n$ and the monotone rearrangement $\kappa^\dag$ of the symbol $a(x)(2-2\cos\theta)$ for $n=50$ and $a(x)=x{\rm e}^{-x}$.}
\label{FDd2}
\end{minipage}
\hspace{0.01\textwidth}
\begin{minipage}{0.375\textwidth}
\centering
\captionof{table}{Example~\ref{exa}: Computation of $|\mathbf{s}_n-\mathbf{e}_n|_\infty$ for $a(x)=x{\rm e}^{-x}$ and for increasing values of $n$.}
\begin{tabular}{rc}
\toprule
$n$ & $|\mathbf{s}_n-\mathbf{e}_n|_\infty$\\
\midrule
50   & 0.0327\\
100  & 0.0165\\
200  & 0.0083\\
400  & 0.0042\\
800 & 0.0022\\
1600 & 0.0011\\
\bottomrule
\end{tabular}
\label{FDdT2}
\end{minipage}
\end{figure}
According to the informal meaning of the spectral distribution $\{A_n\}_n\sim_\lambda a(x)(2-2\cos\theta)$ in \eqref{diffsigla}, if $n$ is large enough, then, assuming we have no outliers, the eigenvalues of $A_n$ are approximately equal to the uniform samples $\kappa^\dag(\frac in)$, $i=1,\ldots,n$, where $\kappa^\dag:[0,1]\to\mathbb R$ is the monotone rearrangement of the spectral symbol $\kappa(x,\theta)=a(x)(2-2\cos\theta):[0,1]\times[0,\pi]\to\mathbb R$.\,\footnote{\,Note that, by Definition~\ref{def:distr}, both $\kappa(x,\theta):[0,1]\times[-\pi,\pi]\to\mathbb R$ and its restriction to $[0,1]\times[0,\pi]$ considered here are spectral symbols for $\{A_n\}_n$, due to the symmetry of $\kappa(x,\theta)$ with respect to the variable $\theta$.} 
This is confirmed by Figure~\ref{FDd2} and Table~\ref{FDdT2} for the case $a(x)=x{\rm e}^{-x}$.
In Figure~\ref{FDd2}, we plotted the eigenvalues of $A_n$ for $n=50$ and the graph of $\kappa^\dag$. The eigenvalues of $A_n$ were sorted in ascending order and positioned at $\frac in$, $i=1,\ldots,n$. The graph of $\kappa^\dag$ was obtained through the procedure {\bf R1}, i.e., by plotting for $r=1000$ the graph of the piecewise linear function $\kappa^\dag_r:[0,1]\to\mathbb R$ that interpolates over the nodes $(0,\frac1{r^{2^{\vphantom{0}}}},\frac2{r^{2^{\vphantom{0}}}},\ldots,1)$ the samples $\kappa(\frac{i_{1_{\vspace{0pt}}}}r,\frac{i_{2_{\vspace{0pt}}}\pi}r)$, $i_1,i_2=1,\ldots,r$, arranged in ascending order.
We observe an excellent agreement between the eigenvalues of $A_n$ and the graph of $\kappa^\dag$ (which also indicates the absence of outliers in this case).
In Table~\ref{FDdT2}, we computed, for increasing values of $n$, the $\infty$-norm of the difference $\mathbf{s}_n-\mathbf{e}_n$, where
\begin{itemize}[nolistsep,leftmargin=*]
	\item $\mathbf{e}_n$ is the vector of the eigenvalues $\lambda_i(A_n)$, $i=1,\ldots,n$, sorted in ascending order,
	\item $\mathbf{s}_n$ is the vector of the samples $\kappa^\dag(\frac in)$, $i=1,\ldots,n$, sorted in ascending order.
\end{itemize}
As shown in Table~\ref{FDdT2}, the norm $|\mathbf{s}_n-\mathbf{e}_n|_\infty$ converges to~$0$ as $n\to\infty$, though the convergence is slow.
We point out that in Table~\ref{FDdT2} we approximated $\kappa^\dag$ by $\kappa^\dag_r$ with $r=5000$ instead of $r=1000$, so as to obtain more accurate values for the samples $\kappa^\dag(\frac in)$, $i=1,\ldots,n$.
\end{example}


\subsubsection{FD Discretization of Convection-Diffusion-Reaction Equations}\label{oss2}

\subsubsection*{1st Part}
Suppose we add to the diffusion equation \eqref{1d.pde} a convection and a reaction term. In this way, we obtain the following convection-diffusion-reaction equation in divergence form with Dirichlet boundary conditions:
\begin{equation}\label{1d.pde''}
\left\{\begin{aligned}
&-(a(x)u'(x))'+b(x)u'(x)+c(x)u(x)=f(x), &&\quad x\in(0,1),\\[3pt]
&u(0)=\alpha,\quad u(1)=\beta.
\end{aligned}\right.
\end{equation}
Based on Remark~\ref{oss1}, we expect that the term $b(x)u'(x)+c(x)u(x)$, which only involves lower-order derivatives of $u(x)$, does not enter the expression of the symbol. In other words, if we discretize the higher-order term $-(a(x)u'(x))'$ as in \eqref{FD-formula}, the symbol of the resulting FD discretization matrices $B_n$ should be again $a(x)(2-2\cos\theta)$. We are going to show that this is in fact the case. This highlights a general aspect: {\em lower-order terms such as $b(x)u'(x)+c(x)u(x)$ do not enter the expression of the symbol and do not affect in any way the singular value and spectral distribution of DE discretization matrices.}

\paragraph{FD discretization}
Let $n\in\mathbb N$, set $h=\frac1{n+1}$ and $x_j=jh$ for all $j\in[0,n+1]$. Consider the discretization of \eqref{1d.pde''} by the FD scheme defined as follows.
\begin{itemize}[nolistsep,leftmargin=*]
 	\item To approximate the higher-order (diffusion) term $-(a(x)u'(x))'$, use again the FD formula \eqref{FD-formula}, i.e.,
 	\begin{equation}\label{FD-formula-repeat}
 	-(a(x)u'(x))'|_{x=x_j}\approx\frac{-a(x_{j+\frac1{2^{\vphantom{\mbox{\tiny 1}}}}})u(x_{j+1})+\bigl(a(x_{j+\frac1{2^{\vphantom{\mbox{\tiny 1}}}}})+a(x_{j-\frac1{2^{\vphantom{\mbox{\tiny 1}}}}})\bigr)u(x_j)-a(x_{j-\frac1{2^{\vphantom{\mbox{\tiny 1}}}}})u(x_{j-1})}{h^2}.
 	\end{equation}
 	\item To approximate the convection term $b(x)u'(x)$, use any 
 	FD formula; to fix ideas, here we use the second-order central formula
\begin{equation}\label{FD-ordine1}
b(x)u'(x)|_{x=x_j}\approx b(x_j)\frac{u(x_{j+1})-u(x_{j-1})}{2h}.
\end{equation}
	\item To approximate the reaction term $c(x)u(x)$, use the obvious equation
	\begin{equation}\label{f.re}
	c(x)u(x)|_{x=x_j}=c(x_j)u(x_j).
	\end{equation}
\end{itemize}
This means that the values of the solution $u$ at the nodes $x_j$, $j=1,\ldots,n$, satisfy (approximately) the following linear system:
\begin{align*}
&-a(x_{j+\frac1{2^{\vphantom{\mbox{\tiny 1}}}}})u(x_{j+1})+\bigl(a(x_{j+\frac1{2^{\vphantom{\mbox{\tiny 1}}}}})+a(x_{j-\frac1{2^{\vphantom{\mbox{\tiny 1}}}}})\bigr)u(x_j)-a(x_{j-\frac1{2^{\vphantom{\mbox{\tiny 1}}}}})u(x_{j-1})\notag\\
&+\frac h2\bigl(b(x_j)u(x_{j+1})-b(x_j)u(x_{j-1})\bigr)+h^2c(x_j)u(x_j)=h^2f(x_j),\qquad j=1,\ldots,n.
\end{align*}
We therefore approximate the nodal value $u(x_j)$ with the value $u_j$ for $j=0,\ldots,n+1$, where $u_0=\alpha$, $u_{n+1}=\beta$, and $\mathbf{u}=(u_1,\ldots,u_n)^T$ solves
\begin{align*}
&-a(x_{j+\frac1{2^{\vphantom{\mbox{\tiny 1}}}}})u_{j+1}+\bigl(a(x_{j+\frac1{2^{\vphantom{\mbox{\tiny 1}}}}})+a(x_{j-\frac1{2^{\vphantom{\mbox{\tiny 1}}}}})\bigr)u_j-a(x_{j-\frac1{2^{\vphantom{\mbox{\tiny 1}}}}})u_{j-1}\notag\\
&+\frac h2\bigl(b(x_j)u_{j+1}-b(x_j)u_{j-1}\bigr)+h^2c(x_j)u_j=h^2f(x_j),\qquad j=1,\ldots,n.
\end{align*}
The matrix $B_n$ of this linear system
admits a natural decomposition as 
\begin{equation}\label{conv-reac}
B_n=A_n+Z_n,
\end{equation}
where $A_n$ is the matrix coming from the discretization of the higher-order (diffusion) term $-(a(x)u'(x))$, while $Z_n$ is the matrix coming from the discretization of the lower-order (convection and reaction) terms $b(x)u'(x)$ and $c(x)u(x)$. Note that $A_n$ is given by \eqref{An} and $Z_n$ is given by 
\begin{align}\label{Y_n-cdr}
Z_n&=\frac h2\begin{bmatrix}
0 & b_1 & & & \\[5pt]
-b_2 & 0 & b_2 & & \\[5pt]
& \ddots & \ddots & \ddots & \\[5pt]
& & -b_{n-1} & 0 & b_{n-1}\\[5pt]
& & & -b_n & 0
\end{bmatrix}+h^2\begin{bmatrix}
c_1 & & & & \\[5pt]
& c_2 & & & \\[5pt]
& & \ddots & & \\[5pt]
& & & c_{n-1} & \\[5pt]
& & & & c_n
\end{bmatrix},
\end{align}
where $b_i=b(x_i)$ and $c_i=c(x_i)$ for all $i=1,\ldots,n$.

\paragraph{GLT spectral analysis of the FD discretization matrices}
We now prove that Theorem~\ref{FD_T1} holds unchanged with $B_n$ in place of $A_n$. 

\begin{theorem}\label{FD_T2}
If $a:[0,1]\to\mathbb R$ is continuous a.e.\ and $b,c:[0,1]\to\mathbb R$ are bounded then
\begin{equation}\label{cdr1GLT}
\{B_n\}_n\sim_{\rm GLT}a(x)(2-2\cos\theta)
\end{equation}
and
\begin{equation}\label{cdr1sigla}
\{B_n\}_n\sim_{\sigma,\lambda}a(x)(2-2\cos\theta).
\end{equation}
\end{theorem}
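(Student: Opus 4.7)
The plan is to exploit the decomposition \eqref{conv-reac}, namely $B_n=A_n+Z_n$, where $A_n$ is the higher-order diffusion matrix already analyzed in Theorem~\ref{FD_T1} and $Z_n$ is the lower-order matrix \eqref{Y_n-cdr} coming from the convection and reaction terms. The entries of $Z_n$ carry explicit factors of $h=1/(n+1)$ and $h^2$, and since $b,c$ are bounded this suggests that $Z_n$ should be ``negligible'' from the GLT viewpoint, i.e., zero-distributed.

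First I would show $\{Z_n\}_n\sim_\sigma 0$. Using \eqref{Y_n-cdr} and the boundedness of $b,c$, each row and each column of $Z_n$ has at most three nonzero entries of magnitude $O(h)$, so $|Z_n|_1=O(h)$ and $|Z_n|_\infty=O(h)$, whence by \eqref{2-norm}
\[ \|Z_n\|=\|Z_n\|_\infty\le\sqrt{|Z_n|_1\,|Z_n|_\infty}=O(h)=O(n^{-1})=o(1). \]
By \textbf{Z2} with $p=\infty$ we obtain $\{Z_n\}_n\sim_\sigma 0$, and then by the third item of \textbf{GLT3} we get $\{Z_n\}_n\sim_{\rm GLT}0$.

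Next, combining Theorem~\ref{FD_T1} (which gives $\{A_n\}_n\sim_{\rm GLT}a(x)(2-2\cos\theta)$) with the sum rule in \textbf{GLT4} yields
\[ \{B_n\}_n=\{A_n+Z_n\}_n\sim_{\rm GLT}a(x)(2-2\cos\theta)+0=a(x)(2-2\cos\theta), \]
which is \eqref{cdr1GLT}. The singular value half of \eqref{cdr1sigla} then follows immediately from \textbf{GLT1}.

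The only slightly delicate point is the spectral distribution, since $B_n$ is not Hermitian (the convection part of $Z_n$ in \eqref{Y_n-cdr} is skew-symmetric). Here I would invoke \textbf{GLT2} with the splitting $B_n=A_n+Z_n$: the matrix $A_n$ is symmetric by \eqref{An}, and it remains to check $\|Z_n\|_2=o(n^{1/2})$. Since $Z_n$ has at most $3n$ nonzero entries, each of magnitude $O(h)$, one has
\[ \|Z_n\|_2^2=\sum_{i,j}|(Z_n)_{ij}|^2\le 3n\cdot O(h^2)=O(n^{-1}), \]
so $\|Z_n\|_2=O(n^{-1/2})=o(n^{1/2})$. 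The hypotheses of \textbf{GLT2} are then satisfied for $\{B_n\}_n$ (already known to be GLT with the stated symbol), yielding $\{B_n\}_n\sim_\lambda a(x)(2-2\cos\theta)$ and completing \eqref{cdr1sigla}. No step presents a real obstacle; the only thing to be careful about is distinguishing which norm of $Z_n$ is needed at each stage (spectral norm for zero-distributedness via \textbf{Z2}, Frobenius norm for the Hermitian perturbation argument via \textbf{GLT2}).
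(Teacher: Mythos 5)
Your proof is correct and follows essentially the same route as the paper: decompose $B_n=A_n+Z_n$, show $Z_n$ is zero-distributed (hence a GLT sequence with symbol $0$), apply \textbf{GLT4} with Theorem~\ref{FD_T1} to get the GLT relation, and then invoke \textbf{GLT2} on the same decomposition for the spectral distribution. The only superficial difference is that the paper establishes $\|Z_n\|_2=o(n^{1/2})$ once via the Frobenius norm and uses that single bound for both \textbf{Z2} (with $p=2$) and \textbf{GLT2}, whereas you first use the spectral norm with $p=\infty$ and then the Frobenius norm separately (also, your parenthetical that the convection block of $Z_n$ is ``skew-symmetric'' holds only for constant $b$, but this remark plays no role in the argument).
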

\begin{proof}
The matrix $Z_n$ in \eqref{Y_n-cdr} satisfies
\begin{equation}\label{Z0d}
\|Z_n\|_2\le\left(2(n-1)\|b\|_\infty^2\frac{h^2}4\right)^{1/2}+\left(n\|c\|_\infty^2h^4\right)^{1/2}=2^{1/2}(n-1)^{1/2}\|b\|_\infty\frac h2+n^{1/2}\|c\|_\infty h^2=o(n^{1/2}).
\end{equation}
As a consequence, $\{Z_n\}_n\sim_\sigma0$ by {\bf Z2} and $\{Z_n\}_n\sim_{\rm GLT}0$ by {\bf GLT3}.
Since $\{A_n\}_n\sim_{\rm GLT}a(x)(2-2\cos\theta)$ by Theorem~\ref{FD_T1}, the decomposition $B_n=A_n+Z_n$ in \eqref{conv-reac} and {\bf GLT4} imply \eqref{cdr1GLT}.

Now, if the convection term is constant, i.e., $b(x)=C$ identically for some constant $C$, then $B_n$ is symmetric and \eqref{cdr1sigla} follows from \eqref{cdr1GLT} and {\bf GLT1}. If $b(x)$ is not constant, then $B_n$ is not symmetric in general and so \eqref{cdr1GLT} and {\bf GLT1} only imply the singular value distribution $\{B_n\}_n\sim_\sigma a(x)(2-2\cos\theta)$.
However, also the spectral distribution $\{B_n\}_n\sim_\lambda a(x)(2-2\cos\theta)$ holds by \eqref{cdr1GLT} and {\bf GLT2} in view of the decomposition $B_n=A_n+Z_n$, since $A_n$ is symmetric and $\|Z_n\|_2=o(n^{1/2})$ by \eqref{Z0d}.
\end{proof}



\subsubsection*{2nd Part}
So far, we only considered DEs with Dirichlet boundary conditions. A natural question is the following: if we change the boundary conditions, does the expression of the symbol change? The answer is ``no'': {\em boundary conditions do not affect the singular value and eigenvalue distribution because they only produce a small-rank perturbation in the resulting discretization matrices}. To better understand this point, we consider problem \eqref{1d.pde''} with Neumann boundary conditions:
\begin{equation}\label{Neumann}
\left\{\begin{aligned}
&-(a(x)u'(x))'+b(x)u'(x)+c(x)u(x)=f(x), &&\quad x\in(0,1),\\[3pt]
&u'(0)=\alpha,\quad u'(1)=\beta.
\end{aligned}\right.
\end{equation}

\paragraph{FD discretization}
We discretize \eqref{Neumann} by the same FD scheme considered in the 1st part, which is defined by the FD formulas \eqref{FD-formula-repeat}--\eqref{f.re}. In this way, we arrive at the linear system
\begin{equation}\label{complete.sys}
\begin{aligned}
&-a(x_{j+\frac1{2^{\vphantom{\mbox{\tiny 1}}}}})u_{j+1}+\bigl(a(x_{j+\frac1{2^{\vphantom{\mbox{\tiny 1}}}}})+a(x_{j-\frac1{2^{\vphantom{\mbox{\tiny 1}}}}})\bigr)u_j-a(x_{j-\frac1{2^{\vphantom{\mbox{\tiny 1}}}}})u_{j-1}\\
&+\frac h2\bigl(b(x_j)u_{j+1}-b(x_j)u_{j-1}\bigr)+h^2c(x_j)u_j=h^2f(x_j),\qquad j=1,\ldots,n,
\end{aligned}
\end{equation}
which is formed by $n$ equations in $n+2$ unknowns $u_0,u_1,\ldots,u_n,u_{n+1}$. Note that $u_0$ and $u_{n+1}$ should now be considered as unknowns, because they are not specified by the Dirichlet boundary conditions. However, as it is common in the FD context, $u_0$ and $u_{n+1}$ are expressed in terms of $u_1,\ldots,u_n$ by exploiting the Neumann boundary conditions. The simplest choice is to express $u_0$ and $u_{n+1}$ as a function of $u_1$ and $u_n$, respectively, by imposing the conditions
\begin{equation}\label{dBCs}
\frac{u_1-u_0}{h}=\alpha,\qquad\frac{u_{n+1}-u_n}{h}=\beta,
\end{equation}
which yield $u_0=u_1-\alpha h$ and $u_{n+1}=u_n+\beta h$. Substituting into \eqref{complete.sys}, we obtain a linear system with $n$ equations and $n$ unknowns $u_1,\ldots,u_n$. Setting ${a_i=a(x_i)}$, ${b_i=b(x_i)}$, ${c_i=c(x_i)}$ for all $i\in[0,n+1]$, the matrix of this system is
\begin{equation}\label{C-eq}
C_n=B_n+R_n=A_n+Z_n+R_n,
\end{equation}
where $A_n$, $B_n$, $Z_n$ are given by \eqref{An}, \eqref{conv-reac}, \eqref{Y_n-cdr}, respectively, and
\[ R_n=\begin{bmatrix}
-a_{\frac1{2^{\vphantom{\mbox{\tiny 1}}}}}-\dfrac{h}{2^{\vphantom{\mbox{\tiny 1}}}}b_1 & & & & \\
& & & & \\
& & & & \\
& & & & -a_{n+\frac{1}{2^{\vphantom{\mbox{\tiny 1}}}}}+\dfrac{h}{2^{\vphantom{\mbox{\tiny 1}}}}b_n
\end{bmatrix} \]
is a small-rank correction coming from the discretization \eqref{dBCs} of the boundary conditions.

\paragraph{GLT spectral analysis of the FD discretization matrices}
We prove that Theorems~\ref{FD_T1} and~\ref{FD_T2} hold unchanged with $C_n$ in place of $A_n$ and $B_n$, respectively.
\begin{theorem}\label{FD_T3}
If $a:[0,1]\to\mathbb R$ is continuous a.e.\ and $b,c:[0,1]\to\mathbb R$ are bounded then
\begin{equation}\label{cdr2GLT}
\{C_n\}_n\sim_{\rm GLT}a(x)(2-2\cos\theta)
\end{equation}
and
\begin{equation}\label{cdr2sigla}
\{C_n\}_n\sim_{\sigma,\lambda}a(x)(2-2\cos\theta).
\end{equation}
\end{theorem}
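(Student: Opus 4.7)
The plan is to reduce Theorem~\ref{FD_T3} to Theorem~\ref{FD_T2} by observing that $R_n$ is a rank-at-most-$2$ perturbation of $B_n$. Since $R_n$ is diagonal with at most two nonzero entries, $\mathrm{rank}(R_n)\le 2$, so applying \textbf{Z1} with rank-part $R_n$ and norm-part $0$ gives $\{R_n\}_n\sim_\sigma 0$, and the third bullet of \textbf{GLT3} then yields $\{R_n\}_n\sim_{\rm GLT}0$. From the decomposition \eqref{C-eq}, namely $C_n=B_n+R_n$, Theorem~\ref{FD_T2} together with the sum property in \textbf{GLT4} immediately produces $\{C_n\}_n\sim_{\rm GLT}a(x)(2-2\cos\theta)$, proving \eqref{cdr2GLT}. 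The singular value half of \eqref{cdr2sigla} is then automatic via \textbf{GLT1}.

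For the spectral half of \eqref{cdr2sigla}, the key observation is that both $A_n$ and $R_n$ are Hermitian (real symmetric, in fact, since $a,b$ are real), so I would regroup \eqref{C-eq} as $C_n=(A_n+R_n)+Z_n$ and apply \textbf{GLT2} with $X_n=A_n+R_n$ and $Y_n=Z_n$. The hypothesis of \textbf{GLT2} is met because $X_n$ is Hermitian and $\|Z_n\|_2=o(n^{1/2})$ was already established in the proof of Theorem~\ref{FD_T2} via \eqref{Z0d}. Combined with \eqref{cdr2GLT} already in hand, \textbf{GLT2} gives $\{C_n\}_n\sim_\lambda a(x)(2-2\cos\theta)$.

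The subtlety I want to bypass is any need to control $\|R_n\|_2$ directly: since $a$ is only assumed continuous a.e., the values $a(h/2)$ and $a(1-h/2)$ entering $R_n$ could in principle grow along the sequence, so absorbing $R_n$ into the small-norm part $Y_n$ of the \textbf{GLT2} decomposition would require an additional boundedness hypothesis on $a$ near the endpoints. Keeping $R_n$ inside the Hermitian part $X_n=A_n+R_n$ sidesteps this issue entirely, exploiting the trivial fact that diagonal real matrices are Hermitian. This is really the only idea in the argument; everything else is bookkeeping with the tools collected in Section~\ref{Ch9}.
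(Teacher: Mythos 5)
Your proof is correct, and it takes a genuinely different route from the paper's. The paper starts from $A_n$ rather than $B_n$: it bounds $\|R_n\|_2^2\le2\bigl(\|a\|_\infty+\frac h2\|b\|_\infty\bigr)^2=o(n)$, concludes $\|Z_n+R_n\|_2=o(n^{1/2})$, derives the GLT relation from Theorem~\ref{FD_T1}, {\bf Z2}, and {\bf GLT3}--{\bf GLT4}, and then applies {\bf GLT2} with $X_n=A_n$ (Hermitian) and $Y_n=Z_n+R_n$ (small Frobenius norm). You instead dispose of $R_n$ purely by its rank via {\bf Z1}, invoke Theorem~\ref{FD_T2} for the GLT relation, and regroup so that $R_n$ joins the Hermitian part: $X_n=A_n+R_n$, $Y_n=Z_n$. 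The subtlety you flagged is not merely cosmetic but a genuine improvement: the theorem assumes only that $a$ is continuous a.e., not bounded, so $\|a\|_\infty$ may be infinite and the values $a(h/2)$, $a(1-h/2)$ entering $R_n$ may grow with $n$ (take, e.g., $a(x)=x^{-3/4}$, which is continuous a.e.\ on $[0,1]$). In that case the paper's Frobenius bound on $R_n$ is vacuous, $\|Z_n+R_n\|_2=o(n^{1/2})$ can fail, and the paper's application of {\bf GLT2} does not go through as written, whereas yours does --- precisely because $R_n$ is real diagonal and therefore Hermitian, so it belongs in $X_n$ rather than in the small-norm remainder $Y_n$. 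Same toolkit, different and more robust decomposition.
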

\begin{proof}
The matrix $R_n$ satisfies
\[ \|R_n\|_2^2\le2\left(\|a\|_\infty+\frac{h}2\|b\|_\infty\right)^2=o(n). \]
Recalling that $\|Z_n\|_2=o(n^{1/2})$ by \eqref{Z0d}, we have
\[ \|Z_n+R_n\|_2\le\|Z_n\|_2+\|R_n\|_2=o(n^{1/2}). \]
Hence, $\{Z_n+R_n\}_n\sim_\sigma0$ by {\bf Z2} and $\{Z_n+R_n\}_n\sim_{\rm GLT}0$ by ${\bf GLT3}$. Since $\{A_n\}_n\sim_{\rm GLT}a(x)(2-2\cos\theta)$ by Theorem~\ref{FD_T1}, the decomposition $C_n=A_n+Z_n+R_n$ in \eqref{C-eq} and {\bf GLT4} imply \eqref{cdr2GLT}. The singular value distribution in \eqref{cdr2sigla} follows from \eqref{cdr2GLT} and {\bf GLT1}. The spectral distribution in \eqref{cdr2sigla} follows from \eqref{cdr2GLT} and {\bf GLT2} in view of the decomposition $C_n=A_n+Z_n+R_n$, since $A_n$ is symmetric and $\|Z_n+R_n\|_2=o(n^{1/2})$.
%
\end{proof}

\subsubsection*{3rd Part}
Consider the following convection-diffusion-reaction problem:
\begin{equation}\label{non-div}
\left\{\begin{aligned}
&-a(x)u''(x)+b(x)u'(x)+c(x)u(x)=f(x), &&\quad x\in(0,1),\\[3pt]
&u(0)=\alpha,\quad u(1)=\beta.
\end{aligned}\right.
\end{equation}
The difference with respect to problem \eqref{1d.pde''} is that the higher-order differential operator now appears in non-divergence form, i.e., we have $-a(x)u''(x)$ instead of $-(a(x)u'(x))'$. Nevertheless, based on Remark~\ref{oss1}, if we use again the FD formula $(-1,2,-1)$ to discretize the second derivative $-u''(x)$, the symbol of the resulting FD discretization matrices should be again $a(x)(2-2\cos\theta)$. We are going to show that this is in fact the case.

\paragraph{FD discretization}
Let $n\in\mathbb N$, set $h=\frac1{n+1}$ and $x_j=jh$ for all $j=0,\ldots,n+1$. We discretize \eqref{non-div} by using again the central second-order FD scheme, which in this case is defined by the following formulas: 
\begin{align*}
-a(x)u''(x)|_{x=x_j}&\approx a(x_j)\frac{-u(x_{j+1})+2u(x_j)-u(x_{j-1})}{h^2},\qquad j=1,\ldots,n,\\
b(x)u'(x)|_{x=x_j}&\approx b(x_j)\frac{u(x_{j+1})-u(x_{j-1})}{2h},\qquad j=1,\ldots,n,\\
c(x)u(x)|_{x=x_j}&=c(x_j)u(x_j),\qquad j=1,\ldots,n.
\end{align*}
Then, we approximate the nodal value $u(x_j)$ with the value $u_j$ for $j=0,\ldots,n+1$, 
where $u_0=\alpha$, $u_{n+1}=\beta$, and $\mathbf{u}=(u_1,\ldots,u_n)^T$ solves
\begin{align*}
a(x_j)(-u_{j+1}+2u_j-u_{j-1})+\frac h2b(x_j)(u_{j+1}-u_{j-1})+h^2c(x_j)u_j=h^2f(x_j),\qquad j=1,\ldots,n.
\end{align*}
The matrix $E_n$ of this linear system can be decomposed according to the diffusion, convection and reaction term, as follows:
\begin{equation}\label{A-dcr}
E_n=K_n+Z_n,
\end{equation}
where $Z_n$ is the sum of the convection and reaction matrix and is given by \eqref{Y_n-cdr}, while
\begin{align}
K_n&=\begin{bmatrix}
2a_1 & \ -a_1 & \ & \ & \ \\[5pt]
-a_2 & \ 2a_2 & \ -a_2 & \ & \ \\[5pt]
& \ \ddots & \ \ddots & \ \ddots & \ \\[5pt]
& \ & \ -a_{n-1} & \ 2a_{n-1} & \ -a_{n-1}\\[5pt]
& \ & \ & \ -a_n & \ 2a_n
\end{bmatrix}=\left(\mathop{\rm diag}_{i=1,\ldots,n}a_i\right)T_n(2-2\cos\theta)\label{K.0}
\end{align}
is the diffusion matrix ($a_i=a(x_i)$ for all $i=1,\ldots,n$). 

\paragraph{GLT spectral analysis of the FD discretization matrices}
Despite the nonsymmetry of the diffusion matrix, which is due to the non-divergence form of the higher-order (diffusion) operator $-a(x)u''(x)$, we prove that Theorems~\ref{FD_T1}, \ref{FD_T2}, \ref{FD_T3} hold unchanged with $E_n$ in place of $A_n$, $B_n$, $C_n$, respectively. The only difference is that now we have to strengthen the hypothesis on the coefficient $a(x)$.

\begin{theorem}\label{FD_T4}
If $a:[0,1]\to\mathbb R$ is continuous and $b,c:[0,1]\to\mathbb R$ are bounded then
\begin{equation}\label{cdr3GLT}
\{E_n\}_n\sim_{\rm GLT}a(x)(2-2\cos\theta)
\end{equation}
and
\begin{equation}\label{cdr3sigla}
\{E_n\}_n\sim_{\sigma,\lambda}a(x)(2-2\cos\theta).
\end{equation}
\end{theorem}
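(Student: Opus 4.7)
The plan is to follow the same template as Theorems \ref{FD_T1}--\ref{FD_T3}: first establish the GLT relation \eqref{cdr3GLT}, derive the singular value half of \eqref{cdr3sigla} from {\bf GLT1}, and then dispatch the eigenvalue distribution via {\bf GLT2}. The genuine novelty to address is that, unlike the divergence-form discretization, the diffusion matrix $K_n$ here is not symmetric when $a$ is non-constant, so {\bf GLT1} will not deliver the eigenvalue distribution on its own.

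For \eqref{cdr3GLT}, I would begin by observing from \eqref{K.0} that $K_n = D_n^{\mathcal G_n}(a)\,T_n(2-2\cos\theta)$ with $\mathcal G_n = \{x_i\}_{i=1,\ldots,n} = \{i/(n+1)\}_{i=1,\ldots,n}$. This grid is a.u.\ in $[0,1]$, since $|x_i - i/n| = i/(n(n+1))\le 1/(n+1)\to 0$, so {\bf GLT3} yields $\{D_n^{\mathcal G_n}(a)\}_n\sim_{\rm GLT} a(x)$ and $\{T_n(2-2\cos\theta)\}_n\sim_{\rm GLT} 2-2\cos\theta$; {\bf GLT4} then gives $\{K_n\}_n\sim_{\rm GLT}a(x)(2-2\cos\theta)$. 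Exactly as in the proof of Theorem~\ref{FD_T2}, the matrix $Z_n$ in \eqref{Y_n-cdr} satisfies $\|Z_n\|_2 = o(n^{1/2})$ by \eqref{Z0d}, hence $\{Z_n\}_n\sim_\sigma 0$ by {\bf Z2} and $\{Z_n\}_n\sim_{\rm GLT} 0$ by {\bf GLT3}. Summing via {\bf GLT4} using the decomposition $E_n = K_n + Z_n$ from \eqref{A-dcr} proves \eqref{cdr3GLT}. The singular value half of \eqref{cdr3sigla} is then immediate from {\bf GLT1}.

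For the spectral distribution, I would write $K_n = S_n + F_n$, where $S_n$ is the symmetric tridiagonal matrix obtained by replacing the asymmetric off-diagonal pair $(-a_i,-a_{i+1})$ by its average $-\tfrac12(a_i+a_{i+1})$. The only nonzero entries of $F_n$ are $\pm\tfrac12(a_{i+1}-a_i)$ on the two off-diagonals, so
\[
\|F_n\|_2^2 \le 2\sum_{i=1}^{n-1}\tfrac14|a_{i+1}-a_i|^2 \le \tfrac{n-1}{2}\,\omega_a(h)^2.
\]
Since $a$ is continuous on the compact set $[0,1]$, Heine--Cantor gives $\omega_a(h)\to 0$, whence $\|F_n\|_2 = o(n^{1/2})$. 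Combining with the bound on $Z_n$, the decomposition $E_n = S_n + (F_n + Z_n)$ exhibits $E_n$ as a Hermitian matrix plus a perturbation with $\|F_n+Z_n\|_2=o(n^{1/2})$. Feeding this into {\bf GLT2} together with \eqref{cdr3GLT} yields the spectral distribution in \eqref{cdr3sigla}.

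The essential obstacle is precisely the asymmetry of $K_n$ caused by the non-divergence form of the diffusion operator, and the symmetrisation above is what dissolves it. This also explains why the hypothesis on $a$ must be strengthened from ``continuous a.e.''\ to ``continuous'': uniform continuity (equivalently, $\omega_a(1/n)\to 0$) is exactly what renders $\|F_n\|_2$ negligible on the scale $n^{1/2}$ and allows {\bf GLT2} to fire.
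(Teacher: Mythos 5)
Your proof is correct and follows essentially the same strategy as the paper's: prove the GLT relation and the singular value distribution as in Theorem~\ref{FD_T2}, then obtain the eigenvalue distribution by replacing $K_n$ with a symmetric approximation at $o(n^{1/2})$ distance and invoking {\bf GLT2}. The only (inconsequential) difference is the choice of symmetrization: you average the off-diagonal pair $(-a_i,-a_{i+1})$ to $-\tfrac12(a_i+a_{i+1})$, whereas the paper copies the upper off-diagonal into the lower one, yielding $\tilde K_n=S_n^{\mathcal G_n}(a)\circ T_n(2-2\cos\theta)$ (the arrow-shaped sampling matrix of Section~\ref{assm}); both give the same bound $O(n\,\omega_a(h)^2)=o(n)$ for the squared Frobenius norm of the correction, so the argument goes through identically.
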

\begin{proof}
We know from \eqref{Z0d} that
\[ \|Z_n\|_2=o(n^{1/2}), \]
and so $\{Z_n\}_n\sim_\sigma0$ by {\bf Z2} and $\{Z_n\}_n\sim_{\rm GLT}0$ by {\bf GLT3}.
Since the grid $\mathcal G_n=\{x_j\}_{j=1,\ldots,n}$ is clearly a.u.\ in $[0,1]$, by \eqref{K.0} and {\bf GLT3}--{\bf GLT4} we have
\begin{equation*}
\{K_n\}_n\sim_{\rm GLT}a(x)(2-2\cos\theta).
\end{equation*}
Hence, \eqref{cdr3GLT} follows from {\bf GLT4} and the decomposition \eqref{A-dcr}.
From \eqref{cdr3GLT} and {\bf GLT1} we obtain the singular value distribution in \eqref{cdr3sigla}. Note that so far we have not used the continuity assumption on $a$, i.e., both \eqref{cdr3GLT} and the singular value distribution in \eqref{cdr3sigla} hold under the weaker assumption that $a$ is continuous a.e.
To obtain the spectral distribution in \eqref{cdr3sigla}, the idea is to exploit the fact that $K_n$ is ``almost'' symmetric, because $a(x)$ varies continuously when $x$ ranges in $[0,1]$, and so $a(x_j)\approx a(x_{j+1})$ for all $j=1,\ldots,n-1$ (when $n$ is large enough). Therefore, by replacing $K_n$ with one of its symmetric approximations $\tilde K_n$, we can write
\begin{equation}\label{Ktilde-dec}
E_n=\tilde K_n+(K_n-\tilde K_n)+Z_n,
\end{equation}
and we want to obtain the spectral distribution in \eqref{cdr3sigla} from \eqref{cdr3GLT} and {\bf GLT2} applied with $X_n=\tilde K_n$ and $Y_n=(K_n-\tilde K_n)+Z_n$.
Let 
\begin{equation}\label{K_tilde}
\tilde K_n=\begin{bmatrix}
2a_1 & \ -a_1 & \ & \ & \ \\[5pt]
-a_1 & \ 2a_2 & \ -a_2 & \ & \ \\[5pt]
& \ \ddots & \ \ddots & \ \ddots & \ \\[5pt]
& \ & \ -a_{n-2} & \ 2a_{n-1} & \ -a_{n-1}\\[5pt]
& \ & \ & \ -a_{n-1} & \ 2a_n
\end{bmatrix}.
\end{equation}
We have
\begin{align*}
&\|K_n-\tilde K_n\|_2^2=\sum_{\ell=1}^{n-1}(a_{\ell+1}-a_\ell)^2=\sum_{\ell=1}^{n-1}(a(x_{\ell+1})-a(x_\ell))^2\le(n-1)\,\omega_a(h)^2=o(n)
\end{align*}
and
\[ \|(K_n-\tilde K_n)+Z_n\|_2\le\|K_n-\tilde K_n\|_2+\|Z_n\|_2=o(n^{1/2}). \]
Thus, {\bf GLT2} implies $\{E_n\}_n\sim_\lambda a(x)(2-2\cos\theta)$.
\end{proof}

\begin{remark}\label{Dt}
The matrix $\tilde K_n$ used in the proof of Theorem~\ref{FD_T4} is nothing else than $S_n^{\mathcal G_n}(a)\circ T_n(2-2\cos\theta)$, where $S_n^{\mathcal G_n}(a)$ is the $n$th arrow-shaped sampling matrix generated by $a$ corresponding to the a.u.\ grid $\mathcal G_n=\{x_j\}_{j=1,\ldots,n}$; see Section~\ref{assm}.
\end{remark}

\subsubsection*{4th Part}
Based on Remark~\ref{oss1}, if we change the FD scheme to discretize 
the differential problem \eqref{non-div}, the symbol should become $a(x)p(\theta)$, where $p(\theta)$ is the trigonometric polynomial associated with the new FD formula used to approximate the second derivative $-u''(x)$ (the higher-order differential operator). We are going to show through an example that this is indeed the case.

\paragraph{FD discretization}
Consider the convection-diffusion-reaction problem \eqref{non-div}. Instead of the second-order central FD scheme $(-1,2,-1)$, this time we use the fourth-order central FD scheme $\frac1{12^{\vphantom{\mbox{\tiny1}}}}(1,-16,30,-16,1)$ to approximate the second derivative $-u''(x)$. 
In other words, for $j=2,\ldots,n-1$ we approximate the higher-order term $-a(x)u''(x)$ by the FD formula
$$-a(x)u''(x)|_{x=x_j}\approx a(x_j)\frac{u(x_{j+2})-16u(x_{j+1})+30u(x_j)-16u(x_{j-1})+u(x_{j-2})}{12h^2},$$
while for $j=1,n$ we use again the FD scheme $(-1,2,-1)$,
$$-a(x)u''(x)|_{x=x_j}\approx a(x_j)\frac{-u(x_{j+1})+2u(x_j)-u(x_{j-1})}{h^2}.$$
From a numerical point of view, this is not a good choice because the FD formula $\frac1{12^{\vphantom{\mbox{\tiny1}}}}(1,-16,30,-16,1)$ is a very accurate fourth-order formula, and in order not to destroy the accuracy one would gain from this formula, one should use a fourth-order scheme also for $j=1,n$ instead of the classical $(-1,2,-1)$. However, in this work we are not concerned with this kind of issues and we use the classical $(-1,2,-1)$ because it is simpler and allows us to better illustrate the GLT spectral analysis without introducing technicalities. 
As already observed before, the FD schemes used to approximate the lower-order terms $b(x)u'(x)$ and $c(x)u(x)$ do not affect the symbol, as well as the singular value and eigenvalue distribution, of the resulting sequence of discretization matrices. To illustrate once again this point, in this example we assume to approximate $b(x)u'(x)$ and $c(x)u(x)$ by the following alternative FD formulas: for $j=1,\ldots,n$,
\begin{align*}
b(x)u'(x)|_{x=x_j}&\approx b(x_j)\frac{u(x_j)-u(x_{j-1})}{h},\\[3pt]
c(x)u(x)|_{x=x_j}&\approx c(x_j)\frac{u(x_{j+1})+u(x_j)+u(x_{j-1})}3.
\end{align*}
Setting $a_i=a(x_i),\ b_i=b(x_i),\ c_i=c(x_i)$ for all $i=1,\ldots,n$, the resulting FD discretization matrix $P_n$ can be decomposed according to the diffusion, convection and reaction term, as follows:
\begin{equation*} 
P_n=K_n+Z_n,
\end{equation*}
where $Z_n$ is the sum of the convection and reaction matrix,
$$Z_n=h\begin{bmatrix}
b_1 & \ & \ & \ & \ \\[5pt]
-b_2 & \ b_2 & \ & \ & \ \\[5pt]
& \ \ddots & \ \ddots & \ & \ \\[5pt]
& \ & \ -b_{n-1} & \ b_{n-1}\\[5pt]
& \ & \ & \ -b_n & \ b_n
\end{bmatrix}+
\frac{h^2}3\begin{bmatrix}
c_1 & \ c_1 & \ & \ & \ \\[5pt]
c_2 & \ c_2 & \ c_2 & \ & \ \\[5pt]
& \ \ddots & \ \ddots & \ \ddots & \ \\[5pt]
& \ & \ c_{n-1} & \ c_{n-1} & \ c_{n-1}\\[5pt]
& \ & \ & \ c_n & \ c_n
\end{bmatrix},$$
while $K_n$ is the diffusion matrix,
\begin{align*}
K_n&\hspace{-1pt}=\hspace{-0.5pt}\frac1{12}\hspace{-1pt}\begin{bmatrix}
24a_1 & \ -12a_1 & \ & \ & \ & \ & \ \\[9pt]
-16a_2 & \ 30a_2 & \ -16a_2 & \ a_2 & \ & \ & \ \\[9pt]
a_3 & \ -16a_3 & \ 30a_3 & \ -16a_3 & \ a_3 & \ & \ \\[9pt]
& \ \ddots & \ \ddots & \ \ddots & \ \ddots & \ \ddots & \ \\[9pt]
& \ & \ a_{n-2} & \ -16a_{n-2} & \ 30a_{n-2} & \ -16a_{n-2} & \ a_{n-2} \\[9pt]
& \ & \ & \ a_{n-1} & \ -16a_{n-1} & \ 30a_{n-1} & \ -16a_{n-1} \\[9pt]
& \ & \ & \ & \ & \ -12a_n & 24a_n
\end{bmatrix}\hspace{-1pt}. 
\end{align*}

\paragraph{GLT spectral analysis of the FD discretization matrices}
Let $p(\theta)$ be the trigonometric polynomial associated with the FD formula $\frac1{12^{\vphantom{\mbox{\tiny1}}}}(1,-16,30,-16,1)$ used to approximate the second derivative $-u''(x)$, i.e.,
\[ p(\theta)=\frac1{12}({\rm e}^{-2{\rm i}\theta}-16{\rm e}^{-{\rm i}\theta}+30-16{\rm e}^{{\rm i}\theta}+{\rm e}^{2{\rm i}\theta})=\frac1{12}(30-32\cos\theta+2\cos(2\theta)). \]
Based on Remark~\ref{oss1}, the following result is not unexpected. 
\begin{theorem}\label{FD_T5}
If $a:[0,1]\to\mathbb R$ is continuous and $b,c:[0,1]\to\mathbb R$ are bounded then
\begin{equation}\label{cdr4GLT}
\{P_n\}_n\sim_{\rm GLT}a(x)p(\theta)
\end{equation}
and
\begin{equation}\label{cdr4sigla}
\{P_n\}_n\sim_{\sigma,\lambda}a(x)p(\theta).
\end{equation}
\end{theorem}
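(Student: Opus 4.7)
The plan is to follow the pattern established in Theorem~\ref{FD_T4}: decompose $P_n$ as the sum of a ``leading'' diffusion part $K_n$ and a negligible convection--reaction part $Z_n$, prove the GLT relation via {\bf GLT3}--{\bf GLT4}, and then handle the non-symmetry of $K_n$ through a Hermitian approximation in order to invoke {\bf GLT2}. The matrix $Z_n$ has at most $O(n)$ nonzero entries, each bounded by $h\hspace{0.5pt}\|b\|_\infty$ (convection block) or $\frac{h^2}{3}\|c\|_\infty$ (reaction block), so a direct Frobenius-norm estimate gives $\|Z_n\|_2 = O(h\hspace{0.5pt}n^{1/2}) = o(n^{1/2})$, whence $\{Z_n\}_n \sim_\sigma 0$ by {\bf Z2} and $\{Z_n\}_n \sim_{\rm GLT} 0$ by {\bf GLT3}.

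Let $\mathcal G_n = \{x_j\}_{j=1,\ldots,n}$, which is clearly a.u.\ in $[0,1]$. A direct inspection shows that rows $2,\ldots,n-1$ of $K_n$ coincide with the corresponding rows of $D_n^{\mathcal G_n}(a) T_n(p)$, while rows $1$ and $n$ differ because they use the boundary stencil $(-1,2,-1)$ instead of $\frac1{12}(1,-16,30,-16,1)$. Thus $K_n = D_n^{\mathcal G_n}(a) T_n(p) + R_n$, where $R_n$ is supported on rows $1$ and $n$ only, so ${\rm rank}(R_n)\le 2$ and $\|R_n\|_2 = O(1) = o(n^{1/2})$, giving $\{R_n\}_n\sim_\sigma 0$ and $\{R_n\}_n\sim_{\rm GLT} 0$. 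Applying {\bf GLT3}--{\bf GLT4} to the decomposition $P_n = D_n^{\mathcal G_n}(a) T_n(p) + R_n + Z_n$ we obtain \eqref{cdr4GLT}, and the singular value half of \eqref{cdr4sigla} then follows from {\bf GLT1}. Note that everything up to this point works under the weaker hypothesis that $a$ is only continuous a.e.

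For the spectral distribution, $K_n$ is not symmetric, so I approximate it by the Hermitian arrow-shaped matrix $\tilde K_n = S_n^{\mathcal G_n}(a) \circ T_n(p)$ (real symmetric because $p$ has real even Fourier coefficients; see Remark~\ref{Dt} for the analogous construction in the proof of Theorem~\ref{FD_T4}). Since the degree of $p$ equals $r = 2$ and the continuity of $a$ (used here for the first time) ensures $\omega_a(\delta)\to 0$ as $\delta\to 0$, Theorem~\ref{Dtilde_lemma} gives
\[ \|\tilde K_n - D_n^{\mathcal G_n}(a) T_n(p)\|_2 \le \sqrt{2}\,\|p\|_\infty\, n^{1/2}\, \omega_a\Bigl(\tfrac{2}{n} + 2m(\mathcal G_n)\Bigr) = o(n^{1/2}). \]
Writing $P_n = \tilde K_n + Y_n$ with $Y_n = R_n + \bigl(D_n^{\mathcal G_n}(a) T_n(p) - \tilde K_n\bigr) + Z_n$, the triangle inequality yields $\|Y_n\|_2 = o(n^{1/2})$, and {\bf GLT2} combined with \eqref{cdr4GLT} delivers the spectral distribution in \eqref{cdr4sigla}. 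The main obstacle is precisely this non-symmetry of $K_n$ coming from the non-divergence form of the higher-order operator; it is what forces the passage from ``continuous a.e.''\ to ``continuous'' in the hypothesis on $a$, so that the arrow-sampling bound of Theorem~\ref{Dtilde_lemma} becomes effective.
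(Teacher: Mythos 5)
Your proof is correct and follows essentially the same route as the paper's: decompose $P_n$ into diffusion plus convection--reaction, use the arrow-shaped matrix $\tilde K_n = S_n^{\mathcal G_n}(a)\circ T_n(p)$ as the Hermitian approximation, invoke Theorem~\ref{Dtilde_lemma}, and conclude the eigenvalue part by {\bf GLT2}. The one organizational difference is that you observe $K_n - D_n^{\mathcal G_n}(a)T_n(p)$ is supported only on rows $1$ and $n$, so you get the GLT relation directly from {\bf GLT3}--{\bf GLT4} plus a rank-$2$ correction, whereas the paper goes through $\tilde K_n$ for the GLT part too (via \eqref{annex2}) and splits $K_n-\tilde K_n$ into a boundary-row piece $R_n$ and an ``arrow-shift'' piece $N_n$; the content is the same, but your bookkeeping of the boundary correction is a bit cleaner.

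One small caveat: the parenthetical claim that the GLT relation and singular value half hold under ``$a$ continuous a.e.''\ alone is slightly imprecise, since your estimate $\|R_n\|_2 = O(1)$ uses $\|a\|_\infty<\infty$ (the entries of $R_n$ involve $a(x_1)$ and $a(x_n)$ with $x_1\to 0$, $x_n\to 1$); one would need $a$ continuous a.e.\ and bounded (near the endpoints). This is moot under the stated hypothesis of continuity on $[0,1]$.
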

\begin{proof}
To simultaneously obtain \eqref{cdr4GLT} and \eqref{cdr4sigla}, we consider the following decomposition of $P_n$:
\begin{equation}\label{Pn_dec}
P_n=\tilde K_n+(K_n-\tilde K_n)+Z_n,
\end{equation}
where $\tilde K_n$ is the symmetric approximation of $K_n$ given by
\begin{align*}
\tilde K_n&=S_n^{\mathcal G_n}(a)\circ T_n(p)\\
&=\frac1{12}\begin{bmatrix}
30 a_1 & \ -16 a_1 & \ a_1 & \ & \ & \ & \ \\[9pt]
-16 a_1 & \ 30 a_2 & \ -16 a_2 & \ a_2 & \ & \ & \ \\[9pt]
 a_1 & \ -16 a_2 & \ 30 a_3 & \ -16 a_3 & \ a_3 & \ & \ \\[9pt]
& \ \ddots & \ \ddots & \ \ddots & \ \ddots & \ \ddots & \ \\[9pt]
& \ & \ a_{n-4} & \ -16 a_{n-3} & \ 30 a_{n-2} & \ -16 a_{n-2} & \ a_{n-2} \\[9pt]
& \ & \ & \ a_{n-3} & \ -16 a_{n-2} & \ 30 a_{n-1} & \ -16 a_{n-1} \\[9pt]
& \ & \ & \ & \ a_{n-2} & \ -16 a_{n-1} & 30 a_n
\end{bmatrix}
\end{align*}
and $\mathcal G_n=\{x_j\}_{j=1,\ldots,n}$.
We show that: 
\begin{enumerate}[nolistsep]
	\item[(a)] $\{\tilde K_n\}_n\sim_{\rm GLT}a(x)p(\theta)$; \vspace{3pt}
	\item[(b)] $\|Z_n\|_2=o(n^{1/2})$; \vspace{3pt}
	\item[(c)] $\|K_n-\tilde K_n\|_2=o(n^{1/2})$.
\end{enumerate}
Once we have proved (a)--(c), the theorem is proved. Indeed, (b)--(c) imply that 
\[ \|(K_n-\tilde K_n)+Z_n\|_2\le\|K_n-\tilde K_n\|_2+\|Z_n\|_2=o(n^{1/2}), \]
which in turn implies that $\{(K_n-\tilde K_n)+Z_n\}_n\sim_\sigma0$ by {\bf Z2} and $\{(K_n-\tilde K_n)+Z_n\}_n\sim_{\rm GLT}0$ by {\bf GLT3}. Hence, the GLT relation \eqref{cdr4GLT} follows from {\bf GLT4}, keeping in mind (a) and the decomposition \eqref{Pn_dec}; the singular value distribution in \eqref{cdr4sigla} follows from \eqref{cdr4GLT} and {\bf GLT1}; and the spectral distribution in \eqref{cdr4sigla} follows from \eqref{cdr4GLT} and {\bf GLT2} applied with $X_n=\tilde K_n$ and $Y_n=(K_n-\tilde K_n)+Z_n$.

\medskip

\noindent{\em Proof of }(a). This follows from Theorem~\ref{Dtilde_lemma}, since $\mathcal G_n$ is a.u.\ in $[0,1]$.

\medskip

\noindent{\em Proof of }(b). We have
\begin{align*}
\|Z_n\|_2\le\left((2n-1)\|b\|_\infty^2h^2\right)^{1/2}+\left((3n-2)\|c\|_\infty^2\frac{h^4}9\right)^{1/2}=(2n-1)^{1/2}\|b\|_\infty h+(3n-2)^{1/2}\|c\|_\infty\frac{h^2}{3}=o(n^{1/2}).
\end{align*}

\medskip

\noindent{\em Proof of }(c). A direct comparison between $K_n$ and $\tilde K_n$ shows that
$$K_n=\tilde K_n+R_n+N_n,$$
where $N_n=K_n-\tilde K_n-R_n$ and $R_n$ is the matrix whose rows are all zeros except for the first and the last one, which are given by
\[ \frac1{12}\left[\,-6a_1\qquad 4a_1\qquad-a_1\qquad0\qquad\cdots\qquad0\,\right] \]
and
\[ \frac1{12}\left[\,0\qquad\cdots\qquad0\qquad -a_{n-2}\qquad -12a_n+16a_{n-1}\qquad -6a_n\,\right], \]
respectively.
We have
\[ \|R_n\|_2^2\le7\|a\|_\infty^2,\qquad\|N_n\|_2^2=\sum_{\ell=1}^{n-2}256(a_{\ell+1}-a_\ell)^2+\sum_{\ell=1}^{n-3}(a_{\ell+2}-a_\ell)^2\le257n\,\omega_a(2h)^2 \]
and
\[ \|K_n-\tilde K_n\|_2\le\|R_n\|_2+\|N_n\|_2\le7^{1/2}\|a\|_\infty+257^{1/2}n^{1/2}\omega_a(2h)=o(n^{1/2}). \tag*{\qedhere} \]
\end{proof}

\begin{remark}[\textbf{nonnegativity and order of the zero at $\boldsymbol{\theta=0}$}]\label{order-zero}
Although we have changed the FD scheme to approximate the second derivative $-u''(x)$, the resulting trigonometric polynomial $p(\theta)$ retains some properties of $2-2\cos\theta$. Indeed, $p(\theta)$ is nonnegative on $[-\pi,\pi]$ and it has a unique zero of order 2 at $\theta=0$, because
$$\lim_{\theta\to0}\frac{p(\theta)}{\theta^2}=1=\lim_{\theta\to0}\frac{2-2\cos\theta}{\theta^2}.$$
This reflects the fact the the associated FD formula $\frac1{12^{\vphantom{1}}}(1,-16,30,-16,1)$ approximates $-u''(x)$, which is a differential operator of order 2 and it is also nonnegative on $\{v\in C^2([0,1]):v(0)=v(1)=0\}$; see also Remark~\ref{oss1.1}.
\end{remark}


\subsubsection{FD Discretization of Higher-Order Equations}\label{ho} 

So far we only considered the FD discretization of second-order DEs. In order to show that the GLT spectral analysis is not limited to second-order DEs, in this section we deal with an higher-order problem. For simplicity, we focus on the following fourth-order problem with homogeneous Dirichlet--Neumann boundary conditions:
\begin{equation}\label{hoPDE}
\left\{\begin{aligned}
&a(x)u^{(4)}(x)=f(x), &&\quad x\in(0,1),\\[3pt]
&u(0)=0,\quad u(1)=0,\\[3pt]
&u'(0)=0,\quad u'(1)=0.
\end{aligned}\right.
\end{equation}
We do not consider more complicated boundary conditions, and we do not include terms with lower-order derivatives, because we know from Remark~\ref{oss1} and the experience gained from Section~\ref{oss2} that both these ingredients only serve to complicate things, but ultimately they do not affect the symbol, as well as the singular value and eigenvalue distribution, of the resulting discretization matrices. Based on Remark~\ref{oss1}, the symbol of the matrix-sequence arising from the FD discretization of \eqref{hoPDE} should be $a(x)q(\theta)$, where $q(\theta)$ is the trigonometric polynomial associated with the FD formula used to discretize $u^{(4)}(x)$. We are going to see that this is in fact the case.

\paragraph{FD discretization}
We approximate the fourth derivative $u^{(4)}(x)$ by the second-order central FD scheme $(1,-4,6,$\linebreak$-4,1)$. 
This scheme yields the approximation
\[ a(x)u^{(4)}(x)|_{x=x_j}\approx a(x_j)\frac{u(x_{j+2})-4u(x_{j+1})+6u(x_j)-4u(x_{j-1})+u(x_{j-2})}{h^4}, \]
for all $j=2,\ldots,n+1$; here, $h=\frac1{n+3^{\vphantom{\mbox{\tiny 1}}}}$ and $x_j=jh$ for $j=0,\ldots,n+3$. Taking into account the homogeneous boundary conditions, we approximate
the nodal value $u(x_j)$ with the value $u_j$
for $j=0,\ldots,n+3$, where $u_0=u_1=u_{n+2}=u_{n+3}=0$ and $\mathbf{u}=(u_2,\ldots,u_{n+1})^T$ is the solution of the linear system
$$a(x_j)(u_{j+2}-4u_{j+1}+6u_j-4u_{j-1}+u_{j-2})=h^4f(x_j),\qquad j=2,\ldots,n+1.$$
The matrix $A_n$ of this linear system is given by
$$A_n=\begin{bmatrix}
6a_2 & -4a_2 & a_2 & & & & \\[7pt]
-4a_3 & 6a_3 & -4a_3 & a_3 & & & \\[7pt]
a_4 & -4a_4 & 6a_4 & -4a_4 & a_4 & & \\[7pt]
& \ddots & \ddots & \ddots & \ddots & \ddots & \\[7pt]
& & a_{n-1} & -4a_{n-1} & 6a_{n-1} & -4a_{n-1} & a_{n-1}\\[7pt]
& & & a_n & -4a_n & 6a_n & -4a_n\\[7pt]
& & & & a_{n+1} & -4a_{n+1} & 6a_{n+1}
\end{bmatrix},$$
where $a_i=a(x_i)$ for all $i=2,\ldots,n+1$.

\paragraph{GLT spectral analysis of the FD discretization matrices}
Let $q(\theta)$ be the trigonometric polynomial associated with the FD formula $(1,-4,6,-4,1)$, i.e.,
\[ q(\theta)={\rm e}^{-2{\rm i}\theta}-4{\rm e}^{-{\rm i}\theta}+6-4{\rm e}^{{\rm i}\theta}+{\rm e}^{2{\rm i}\theta}=6-8\cos\theta+2\cos(2\theta). \] 


\begin{theorem}\label{FD_T6}
If $a:[0,1]\to\mathbb R$ is continuous then
\begin{equation}\label{hoGLT}
\{A_n\}_n\sim_{\rm GLT}a(x)q(\theta)
\end{equation}
and
\begin{equation}\label{hosigla}
\{A_n\}_n\sim_{\sigma,\lambda}a(x)q(\theta).
\end{equation}
\end{theorem}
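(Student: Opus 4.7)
The plan is to mirror the strategy used for Theorems~\ref{FD_T4}--\ref{FD_T5}: first establish the GLT relation by factoring $A_n$ as a product of a diagonal-sampling matrix and a banded Toeplitz matrix, and only afterwards upgrade to the spectral distribution through a symmetric arrow-shaped approximation together with {\bf GLT2}.

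First I would observe that $A_n$ admits the clean factorization
\[ A_n = D_n^{\mathcal G_n}(a)\,T_n(q), \]
where $\mathcal G_n=\{x_{j+1}\}_{j=1,\ldots,n}$ is the grid of $n$ points with $x_{j+1}=(j+1)/(n+3)$ and $T_n(q)$ is the pentadiagonal Toeplitz matrix generated by $q(\theta)=6-8\cos\theta+2\cos(2\theta)$. Comparing rows confirms this: row~$i$ of $A_n$ equals $a_{i+1}$ times row~$i$ of $T_n(q)$, and the boundary rows of $A_n$ lose exactly the same off-diagonal entries as those of $T_n(q)$ (so no extra correction is needed, unlike in Theorem~\ref{FD_T5}). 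Moreover, since
\[ \left|x_{j+1}-\frac jn\right|=\left|\frac{n-2j}{n(n+3)}\right|\le\frac{1}{n+3}, \]
the grid $\mathcal G_n$ is asymptotically uniform in $[0,1]$. Consequently, by {\bf GLT3}, $\{D_n^{\mathcal G_n}(a)\}_n\sim_{\rm GLT}a(x)$ (using the continuity of $a$) and $\{T_n(q)\}_n\sim_{\rm GLT}q(\theta)$, and {\bf GLT4} then yields the GLT relation~\eqref{hoGLT}. The singular value distribution in~\eqref{hosigla} follows immediately from~\eqref{hoGLT} and {\bf GLT1}.

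For the spectral distribution the obstacle is that $A_n$ is not Hermitian in general, since the rows of $T_n(q)$ are scaled by different values of $a$. The remedy is the arrow-shaped sampling trick of Section~\ref{assm}: put $\tilde K_n=S_n^{\mathcal G_n}(a)\circ T_n(q)$, which is symmetric by construction. Theorem~\ref{Dtilde_lemma} gives on the one hand $\{\tilde K_n\}_n\sim_{\rm GLT}a(x)q(\theta)$, and on the other
\[ \|A_n-\tilde K_n\|_2=\|D_n^{\mathcal G_n}(a)T_n(q)-S_n^{\mathcal G_n}(a)\circ T_n(q)\|_2\le 2^{1/2}\|q\|_\infty n^{1/2}\,\omega_a\!\left(\tfrac{2}{n}+2m(\mathcal G_n)\right)=o(n^{1/2}). \]
Since $\tilde K_n$ is Hermitian, {\bf GLT1} upgrades the first relation to $\{\tilde K_n\}_n\sim_\lambda a(x)q(\theta)$. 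Applying {\bf GLT2} (or directly {\bf S2}) with $X_n=\tilde K_n$ and $Y_n=A_n-\tilde K_n$ then delivers the spectral part of~\eqref{hosigla}.

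The only delicate point is the use of the \emph{continuity} (not mere a.e.\ continuity) of $a$: this is what ensures $\omega_a(\delta)\to0$ as $\delta\to0$, and hence what makes the bound in Theorem~\ref{Dtilde_lemma} a genuine $o(n^{1/2})$. This is precisely the same strengthening of hypothesis that was required to pass from Theorem~\ref{FD_T3} to Theorem~\ref{FD_T4}, and it is needed again here for the same reason: to symmetrize an asymmetric diffusion-type matrix without spoiling the trace-norm estimate. Beyond this one subtlety, the proof is routine book-keeping once one recognizes the factorization $A_n=D_n^{\mathcal G_n}(a)T_n(q)$ and invokes the machinery already assembled.
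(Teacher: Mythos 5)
Your proof is correct and follows essentially the same route as the paper: recognize the factorization $A_n=D_n^{\mathcal G_n}(a)T_n(q)$ with the a.u.\ grid $\mathcal G_n=\{x_{i+1}\}_{i=1,\ldots,n}$, get \eqref{hoGLT} from {\bf GLT3}--{\bf GLT4} and the singular value part from {\bf GLT1}, then upgrade to the eigenvalue distribution via the symmetric arrow-shaped approximation $S_n^{\mathcal G_n}(a)\circ T_n(q)$, Theorem~\ref{Dtilde_lemma}, and {\bf GLT2}. (There is only a harmless arithmetic slip in the a.u.\ check: $\frac{j+1}{n+3}-\frac jn=\frac{n-3j}{n(n+3)}$, not $\frac{n-2j}{n(n+3)}$, but the resulting bound is still $O(1/n)\to0$ and the conclusion is unchanged.)
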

\begin{proof}
The grid $\mathcal G_n=\{x_{i+1}\}_{i=1,\ldots,n}$ is clearly a.u.\ in $[0,1]$, and it is clear from the expressions of $A_n$ and $q(\theta)$ that
\[ A_n=D_n^{\mathcal G_n}(a)T_n(q). \]
Thus, \eqref{hoGLT} follows immediately from {\bf GLT3}--{\bf GLT4}. The singular value distribution in \eqref{hosigla} follows from \eqref{hoGLT} and {\bf GLT1}. The spectral distribution in \eqref{hosigla} follows from \eqref{hoGLT} and {\bf GLT2} applied to the decomposition
\[ A_n=S_n^{\mathcal G_n}(a)\circ T_n(q)+(A_n-S_n^{\mathcal G_n}(a)\circ T_n(q)), \]
taking into account that $S_n^{\mathcal G_n}(a)\circ T_n(q)$ is symmetric and $\|A_n-S_n^{\mathcal G_n}(a)\circ T_n(q)\|_2=o(n^{1/2})$ by Theorem~\ref{Dtilde_lemma}.
\end{proof}

\begin{remark}[\textbf{nonnegativity and order of the zero at $\boldsymbol{\theta=0}$}]\label{order-zero'}
The polynomial $q(\theta)$ is nonnegative on $[-\pi,\pi]$ and has a unique zero of order 4 at $\theta=0$, because 
$$\lim_{\theta\to0}\frac{q(\theta)}{\theta^4}=1.$$
This reflects the fact that the FD formula $(1,-4,6,-4,1)$ associated with $q(\theta)$ approximates the fourth derivative $u^{(4)}(x)$, which is a differential operator of order 4 (it is also nonnegative on the space of functions $v\in C^4([0,1])$ such that $v(0)=v(1)=0$ and $v'(0)=v'(1)=0$, in the sense that $\int_0^1v^{(4)}(x)v(x){\rm d} x=\int_0^1(v''(x))^2{\rm d} x\ge0$ for all such~$v$); see also Remarks~\ref{oss1.1} and~\ref{order-zero}.
\end{remark}


\subsubsection{Non-Uniform FD Discretizations}\label{non-uniform-FDs}
All the FD discretizations considered in Sections~\ref{1D-case}--\ref{ho} are based on uniform grids. It is natural to ask whether the theory of GLT sequences finds applications also in the context of non-uniform FD discretizations. The answer to this question is affirmative, at least in the case where the non-uniform grid is obtained as the mapping of a uniform grid through a fixed function $G$, independent of the mesh size. In this section we illustrate this claim by means of a simple example.

\paragraph{FD discretization}
Consider the diffusion equation~\eqref{1d.pde}. 
Take a discretization parameter $n\in\mathbb N$, fix a set of grid points $0=x_0<x_1<\ldots<x_{n+1}=1$ and define the corresponding stepsizes $h_j=x_j-x_{j-1}$ for $j=1,\ldots,n+1$. For each $j=1,\ldots,n$, we approximate $-(a(x)u'(x))'|_{x=x_j}$ by the FD formula
{\allowdisplaybreaks\begin{align*}
&-(a(x)u'(x))'|_{x=x_j}\approx-\frac{a(x_j+\frac{h_{j+1}}{2^{\vphantom{1}}})u'(x_j+\frac{h_{j+1}}{2^{\vphantom{1}}})-a(x_j-\frac{h_j}{2^{\vphantom{1}}})u'(x_j-\frac{h_j}{2^{\vphantom{1}}})}{\frac{h_{j+1}}{2^{\vphantom{1}}}+\frac{h_j}{2^{\vphantom{1}}}}\\[5pt]
&\approx-\frac{a(x_j+\frac{h_{j+1}}{2^{\vphantom{1}}})\dfrac{u(x_{j+1})-u(x_j)}{h_{j+1}}-a(x_j-\frac{h_j}{2^{\vphantom{1}}})\dfrac{u(x_j)-u(x_{j-1})}{h_j}}{\frac{h_{j+1}}{2^{\vphantom{1}}}+\frac{h_j}{2^{\vphantom{1}}}}\\[5pt]
&=\frac2{h_j+h_{j+1}}\left[-\frac{a(x_j-\frac{h_j}{2^{\vphantom{1}}})}{h_j}u(x_{j-1})+\biggl(\frac{a(x_j-\frac{h_j}{2^{\vphantom{1}}})}{h_j}
+\frac{a(x_j+\frac{h_{j+1}}{2^{\vphantom{1}}})}{h_{j+1}}\biggr)u(x_j)-\frac{a(x_j+\frac{h_{j+1}}{2^{\vphantom{1}}})}{h_{j+1}}u(x_{j+1})\right].
\end{align*}}%
This means that the 
values of the solution $u$ at the nodes $x_j$, $j=1,\ldots,n$, satisfy (approximately) the following linear system:
\begin{align*}
&-\frac{a(x_j-\frac{h_j}{2^{\vphantom{1}}})}{h_j}u(x_{j-1})+\biggl(\frac{a(x_j-\frac{h_j}{2^{\vphantom{1}}})}{h_j}+\frac{a(x_j+\frac{h_{j+1}}{2^{\vphantom{1}}})}{h_{j+1}}\biggr)u(x_j)-\frac{a(x_j+\frac{h_{j+1}}{2^{\vphantom{1}}})}{h_{j+1}}u(x_{j+1})=\frac{h_j+h_{j+1}}2f(x_j),\\[5pt]
&j=1,\ldots,n.
\end{align*}
We therefore approximate 
the nodal value $u(x_j)$ with the value $u_j$
for $j=0,\ldots,n+1$, where $u_0=\alpha$, $u_{n+1}=\beta$, and $\mathbf u=(u_1,\ldots,u_n)^T$ solves
\begin{align*}
&-\frac{a(x_j-\frac{h_j}{2^{\vphantom{1}}})}{h_j}u_{j-1}+\biggl(\frac{a(x_j-\frac{h_j}{2^{\vphantom{1}}})}{h_j}+\frac{a(x_j+\frac{h_{j+1}}{2^{\vphantom{1}}})}{h_{j+1}}\biggr)u_j-\frac{a(x_j+\frac{h_{j+1}}{2^{\vphantom{1}}})}{h_{j+1}}u_{j+1}=\frac{h_j+h_{j+1}}2f(x_j),\\[5pt]
&j=1,\ldots,n. 
\end{align*}
The matrix of this linear system is the $n\times n$ tridiagonal symmetric matrix given by
\begin{equation}\label{FD-AGn0}
{\rm tridiag}_n\biggl[-\frac{a(x_j-\frac{h_j}{2^{\vphantom{1}}})}{h_j},\ \frac{a(x_j-\frac{h_j}{2^{\vphantom{1}}})}{h_j}+\frac{a(x_j+\frac{h_{j+1}}{2^{\vphantom{1}}})}{h_{j+1}},\ -\frac{a(x_j+\frac{h_{j+1}}{2^{\vphantom{1}}})}{h_{j+1}}\biggr],
\end{equation}
where 
\[ {\rm tridiag}_n[r_j,s_j,t_j]=\begin{bmatrix}
s_1 & t_1 & & & \\
r_2 & s_2 & t_2 & & \\
& \ddots & \ddots & \ddots & \\
& & r_{n-1} & s_{n-1} & t_{n-1}\\
& & & r_n & s_n
\end{bmatrix}. \]

\paragraph{GLT spectral analysis of the FD discretization matrices}
Let $h=\frac1{n+1}$ and $\hat x_j=jh$, $j=0,\ldots,n+1$. In the following, we assume that the set of points $\{x_0,x_1,\ldots,x_{n+1}\}$ is obtained as the mapping of the uniform grid $\{\hat x_0,\hat x_1,\ldots,\hat x_{n+1}\}$ through a fixed function $G$, i.e., $x_j=G(\hat x_j)$ for $j=0,\ldots,n+1$, where $G:[0,1]\to[0,1]$ is an increasing and bijective map, independent of the mesh parameter $n$. The resulting FD discretization matrix \eqref{FD-AGn0} will be denoted by $A_{G,n}$ in order to emphasize its dependence on $G$. In formulas, 
\begin{equation}\label{FD-AGn}
A_{G,n}={\rm tridiag}_n\biggl[-\frac{a(G(\hat x_j)-\frac{h_j}{2^{\vphantom{1}}})}{h_j},\ \frac{a(G(\hat x_j)-\frac{h_j}{2^{\vphantom{1}}})}{h_j}+\frac{a(G(\hat x_j)+\frac{h_{j+1}}{2^{\vphantom{1}}})}{h_{j+1}},\ -\frac{a(G(\hat x_j)+\frac{h_{j+1}}{2^{\vphantom{1}}})}{h_{j+1}}\biggr]
\end{equation}
with
\[ h_j=G(\hat x_j)-G(\hat x_{j-1}),\qquad j=1,\ldots,n+1. \]

\begin{theorem}\label{FD_T7}
Let $a:[0,1]\to\mathbb R$ be continuous. Suppose that $G:[0,1]\to[0,1]$ is an increasing bijective map in $C^1([0,1])$ and there exist at most finitely many points $\hat x$ such that $G'(\hat x)=0$. Then\,\footnote{\,It is understood that the function $\dfrac{a(G(\hat x))}{G'(\hat x)}(2-2\cos\theta)$ is defined a.e.\ by its expression, i.e., for all $(\hat x,\theta)$ such that $G'(\hat x)\ne0$.}
\begin{equation}\label{nuGLT}
\Bigl\{\frac1{n+1}A_{G,n}\Bigr\}_n\sim_{\rm GLT}\frac{a(G(\hat x))}{G'(\hat x)}(2-2\cos\theta)
\end{equation}
and
\begin{equation}\label{nusigla}
\Bigl\{\frac1{n+1}A_{G,n}\Bigr\}_n\sim_{\sigma,\lambda}\frac{a(G(\hat x))}{G'(\hat x)}(2-2\cos\theta).
\end{equation}
\end{theorem}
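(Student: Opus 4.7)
The strategy has two stages: first handle the subcase where $G'$ is bounded below by a positive constant via a direct reduction to a product $D_n\cdot T_n$, then extend to the general hypothesis by the closure property \textbf{GLT7}.

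\textbf{Stage 1} (assuming $G'\ge\eta>0$ on $[0,1]$). In this subcase $c(\hat x):=a(G(\hat x))/G'(\hat x)$ is continuous on $[0,1]$. Set $h=1/(n+1)$, $\hat x_j=jh$, and observe that $\hat{\mathcal G}_n=\{\hat x_j\}_{j=1,\ldots,n}$ is a.u.\ in $[0,1]$. Introduce
\[ b_j=\frac{h\,a(m_j)}{h_j},\qquad m_j=\tfrac12\bigl(G(\hat x_{j-1})+G(\hat x_j)\bigr), \]
so that the symmetry of \eqref{FD-AGn} allows one to write $\frac1{n+1}A_{G,n}$ as the tridiagonal matrix with main diagonal $b_j+b_{j+1}$ and off-diagonals $-b_{j+1}$. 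By the mean value theorem, $h_j=G'(\xi_j)h$ and $m_j=G(\eta_j)$ for some $\xi_j,\eta_j\in(\hat x_{j-1},\hat x_j)$, hence $b_j=a(G(\eta_j))/G'(\xi_j)$, and the uniform continuity of $a\circ G$ and $G'$ on $[0,1]$ together with $G'\ge\eta>0$ yields
\[ \max_j|b_j-c(\hat x_j)|=O\bigl(\omega_{a\circ G}(h)+\omega_{G'}(h)\bigr)=o(1). \]
An entry-wise comparison then gives
\[ \tfrac1{n+1}A_{G,n}=D_n^{\hat{\mathcal G}_n}(c)\,T_n(2-2\cos\theta)+N_n, \]
where $N_n$ is tridiagonal with entries $o(1)$; the inequality \eqref{2-norm} therefore gives $\|N_n\|=o(1)$. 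By \textbf{Z2} and \textbf{GLT3}, $\{N_n\}_n\sim_{\rm GLT}0$; combining this with \textbf{GLT3} applied to $D_n^{\hat{\mathcal G}_n}(c)$ and $T_n(2-2\cos\theta)$, followed by \textbf{GLT4}, yields \eqref{nuGLT} in this subcase. Symmetry of $A_{G,n}$ and \textbf{GLT1} then give \eqref{nusigla}.

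\textbf{Stage 2} (general case). Let $\hat x^{(1)},\ldots,\hat x^{(k)}$ be the zeros of $G'$. For each $m\in\mathbb N$, choose a neighborhood $U_m$ of $\{\hat x^{(1)},\ldots,\hat x^{(k)}\}$ in $[0,1]$ with $\mu_1(U_m)\le 1/m$ and a $C^1$ increasing bijection $\tilde G_m:[0,1]\to[0,1]$ that agrees with $G$ on $[0,1]\setminus U_m$ and satisfies $\tilde G_m'\ge\eta_m>0$ globally (always possible since $G$ is strictly increasing and bijective on $[0,1]$). By Stage 1, $\bigl\{\tfrac{1}{n+1}A_{\tilde G_m,n}\bigr\}_n\sim_{\rm GLT}c_m(\hat x)(2-2\cos\theta)$ with $c_m=(a\circ\tilde G_m)/\tilde G_m'$. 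Since $c_m\equiv c$ on $[0,1]\setminus U_m$ and $\mu_1(U_m)\to 0$, we have $c_m\to c$ a.e.\ and hence in measure, so $\kappa_m:=c_m(\hat x)(2-2\cos\theta)\to c(\hat x)(2-2\cos\theta)$ in measure. On the matrix side, $A_{G,n}$ and $A_{\tilde G_m,n}$ agree in every row $j$ with $\{\hat x_{j-1},\hat x_j,\hat x_{j+1}\}\cap U_m=\emptyset$, so their difference has at most $O(n/m)$ nonzero rows and rank at most $Cn/m$. Taking $R_{n,m}=\tfrac{1}{n+1}(A_{\tilde G_m,n}-A_{G,n})$ and $N_{n,m}=0$ in Definition~\ref{S.a.c.s.}, one obtains $\bigl\{\tfrac{1}{n+1}A_{\tilde G_m,n}\bigr\}_n\stackrel{\rm a.c.s.}\longrightarrow\bigl\{\tfrac{1}{n+1}A_{G,n}\bigr\}_n$, and \textbf{GLT7} delivers \eqref{nuGLT}. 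The spectral distribution \eqref{nusigla} follows once more from symmetry and \textbf{GLT1}.

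\textbf{Main obstacle.} The substantive content lives in Stage 1, specifically the uniform estimate $\max_j|b_j-c(\hat x_j)|=o(1)$ and its propagation to a spectral-norm estimate via the tridiagonal structure; once that is in place, everything reduces to routine applications of \textbf{GLT3}--\textbf{GLT4}. Stage 2 is essentially formal, provided one identifies the right regularization $\tilde G_m$ and exploits the tridiagonality of $A_{G,n}$ to convert a measure-$1/m$ modification of $G$ into a rank-$O(n/m)$ perturbation.
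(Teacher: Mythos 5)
Your Stage~1 is essentially the paper's treatment of the regular case: both rewrite $\frac1{n+1}A_{G,n}$ as $D_n^{\hat{\mathcal G}_n}(a(G)/G')\,T_n(2-2\cos\theta)$ plus a tridiagonal correction whose entries vanish uniformly in $n$, by the mean value theorem and uniform continuity of $a\circ G$ and $G'$; your bookkeeping via the midpoints $m_j$ is a slightly tidier parametrization than the paper's $\delta_j,\epsilon_j,\mu_j,\eta_j$ notation, but the content is the same.

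Stage~2 takes a genuinely different route, and it has a gap. The paper never modifies $G$. It keeps the \emph{same} comparison matrix $D_n^{\mathcal G_n}(a(G)/G')\,T_n(2-2\cos\theta)$ for all $m$ and splits the error $Z_n=\frac1{n+1}A_{G,n}-D_n^{\mathcal G_n}(a(G)/G')T_n(2-2\cos\theta)$ row-by-row: the $O(n/m)$ rows with $\hat x_j$ in a $1/m$-neighbourhood of the singularities go into a low-rank piece $R_{n,m}$, and on the remaining rows the Stage~1 estimate applies verbatim with $m_{G'}$ replaced by $m_{G',m}>0$, giving a small-norm piece $N_{n,m}$. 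This is a purely combinatorial split of $Z_n$, and GLT7 then finishes. Your Stage~2 instead regularizes the map itself to a family $\tilde G_m$ and feeds these into Stage~1. The key unproved assertion is that a $C^1$ increasing bijection $\tilde G_m:[0,1]\to[0,1]$ exists which coincides with $G$ off $U_m$ and has $\tilde G_m'\ge\eta_m>0$ globally. This is a nontrivial extension lemma: on each component $(p,q)$ of $U_m$ you need a continuous $\psi>0$ with $\psi(p)=G'(p)$, $\psi(q)=G'(q)$, and $\int_p^q\psi=G(q)-G(p)$, where the two boundary conditions are forced by $C^1$-matching and the integral condition by $\tilde G_m(p)=G(p)$, $\tilde G_m(q)=G(q)$. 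Such $\psi$ does exist (the only obstruction would be if the prescribed integral were nonpositive, which it never is), but this does not follow ``because $G$ is strictly increasing and bijective''; the obvious linear-interpolation candidate can fail when $\int_p^q G'$ is much smaller than $\frac{(q-p)(G'(p)+G'(q))}{2}$, and one must concentrate the area deficit near the endpoints, with extra care when a singularity sits at $0$ or $1$. You would need to state and prove this lemma for Stage~2 to be complete. Once that is done your argument is valid, but it buys nothing over the paper's direct row-split of $Z_n$, which avoids the construction entirely.
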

\begin{proof}
We only prove \eqref{nuGLT} because \eqref{nusigla} follows immediately from \eqref{nuGLT} and {\bf GLT1} as the matrices $A_{G,n}$ are symmetric.
Since $G\in C^1([0,1])$, for every $j=1,\ldots,n$ there exist $\alpha_j\in[\hat x_{j-1},\hat x_j]$ and $\beta_j\in[\hat x_j,\hat x_{j+1}]$ such that
\begin{align}\label{e.1}
h_j&=G(\hat x_j)-G(\hat x_{j-1})=G'(\alpha_j)h=(G'(\hat x_j)+\delta_j)h,\\
h_{j+1}&=G(\hat x_{j+1})-G(\hat x_j)=G'(\beta_j)h=(G'(\hat x_j)+\epsilon_j)h, \label{e.1'}
\end{align}
where 
\begin{align*}
\delta_j &= G'(\alpha_j)-G'(\hat x_j),\\ 
\epsilon_j &= G'(\beta_j)-G'(\hat x_j). 
\end{align*}
Note that
\[ |\delta_j|,|\epsilon_j|\le\omega_{G'}(h),\qquad j=1,\ldots,n. \]
In view of \eqref{e.1}--\eqref{e.1'}, we have, for each $j=1,\ldots,n$,
\begin{align}
a\Bigl(G(\hat x_j)-\frac{h_j}2\Bigr)&=a\Bigl(G(\hat x_j)-\frac h2(G'(\hat x_j)+\delta_j)\Bigr)=a(G(\hat x_j))+\mu_j,\label{e.2}\\[3pt]
a\Bigl(G(\hat x_j)+\frac{h_{j+1}}2\Bigr)&=a\Bigl(G(\hat x_j)+\frac h2(G'(\hat x_j)+\epsilon_j)\Bigr)=a(G(\hat x_j))+\eta_j,\label{e.2'}
\end{align}
where
\begin{align*}
\mu_j&=a\Bigl(G(\hat x_j)-\frac h2(G'(\hat x_j)+\delta_j)\Bigr)-a(G(\hat x_j))=a\Bigl(G(\hat x_j)-\frac h2\,G'(\alpha_j)\Bigr)-a(G(\hat x_j)),\\[3pt]
\eta_j&=a\Bigl(G(\hat x_j)+\frac h2(G'(\hat x_j)+\epsilon_j)\Bigr)-a(G(\hat x_j))=a\Bigl(G(\hat x_j)+\frac h2\,G'(\beta_j)\Bigr)-a(G(\hat x_j)).
\end{align*}
Note that
\[ |\mu_j|,|\eta_j|\le\omega_a\Bigl(\frac h2\|G'\|_\infty\Bigr).
\qquad j=1,\ldots,n, \]
Substituting \eqref{e.1}--\eqref{e.2'} in \eqref{FD-AGn}, we obtain
\begin{align}\label{new_form}
\frac1{n+1}A_{G,n}&=hA_{G,n}\notag\\
&=\textup{tridiag}_n\biggl[-\frac{a(G(\hat x_j))+\mu_j}{G'(\hat x_j)+\delta_j},\; \frac{a(G(\hat x_j))+\mu_j}{G'(\hat x_j)+\delta_j}+\frac{a(G(\hat x_j))+\eta_j}{G'(\hat x_j)+\epsilon_j},\; -\frac{a(G(\hat x_j))+\eta_j}{G'(\hat x_j)+\epsilon_j}\biggr].
\end{align}
Consider the matrix
\begin{equation}\label{diag*toep}
D_n^{\mathcal G_n}\Bigl(\frac{a(G(\hat x))}{G'(\hat x)}\Bigr)T_n(2-2\cos\theta) = \textup{tridiag}_n\biggl[-\frac{a(G(\hat x_j))}{G'(\hat x_j)},\;2\,\frac{a(G(\hat x_j))}{G'(\hat x_j)},\;-\frac{a(G(\hat x_j))}{G'(\hat x_j)}\biggr],
\end{equation}
where $\mathcal G_n=\{\hat x_j\}_{j=1,\ldots,n}$.
Note that this matrix seems to be an ``approximation'' of $\frac1{n+1}A_{G,n}$; cf.\ \eqref{new_form} and \eqref{diag*toep}.
Since the function $a(G(\hat x))/G'(\hat x)$ is continuous a.e.\ in $[0,1]$ and the grid $\mathcal G_n$ is a.u.\ in $[0,1]$, {\bf GLT3} and {\bf GLT4} yield
\[ \biggl\{D_n^{\mathcal G_n}\Bigl(\frac{a(G(\hat x))}{G'(\hat x)}\Bigr)T_n(2-2\cos\theta)\biggr\}_n\sim_{\rm GLT}\frac{a(G(\hat x))}{G'(\hat x)}(2-2\cos\theta). \]
We are going to show that\,\footnote{\,Note that in the left-hand side of \eqref{a.c.s.constant} we have a fixed matrix-sequence and not a sequence of matrix-sequences. This means that the convergence \eqref{a.c.s.constant} is equivalent to
\[ d_{\rm a.c.s.}\biggl(\Bigl\{D_n^{\mathcal G_n}\Bigl(\frac{a(G(\hat x))}{G'(\hat x)}\Bigr)T_n(2-2\cos\theta)\Bigr\}_n,\Bigl\{\frac1{n+1}A_{G,n}\Bigr\}_n\biggr)=0. \] }
\begin{equation}\label{a.c.s.constant}
\Bigl\{D_n^{\mathcal G_n}\Bigl(\frac{a(G(\hat x))}{G'(\hat x)}\Bigr)T_n(2-2\cos\theta)\Bigr\}_n\stackrel{\rm a.c.s.}{\longrightarrow}\Bigl\{\frac1{n+1}A_{G,n}\Bigr\}_n.
\end{equation}
Once this is proved, \eqref{nuGLT} follows immediately from {\bf GLT7}. 

We first prove \eqref{a.c.s.constant} in the case where $G'(\hat x)$ does not vanish over $[0,1]$, so that
\[ m_{G'}=\min_{\hat x\in[0,1]}G'(\hat x)>0 \]
(because $G\in C^1([0,1])$ is increasing). In this case, we show that $\|Z_n\|\to0$, where 
\begin{equation}\label{FD.Z}
Z_n = \frac1{n+1}A_{G,n}-D_n^{\mathcal G_n}\Bigl(\frac{a(G(\hat x))}{G'(\hat x)}\Bigr)T_n(2-2\cos\theta).
\end{equation}
This implies \eqref{a.c.s.constant} by the definition of a.c.s.\ (Definition~\ref{S.a.c.s.}).\,\footnote{\,Take $R_{n,m}=O_n$ (the $n\times n$ zero matrix), $N_{n,m}=Z_n$, $c(m)=0$, $\omega(m)=\frac1m$, and $n_m$ such that $\|Z_n\|\le\frac1m$ for $n\ge n_m$.}
The matrix $Z_n$ is tridiagonal and a straightforward computation based on \eqref{new_form}--\eqref{diag*toep} shows that all its components are bounded in modulus by a quantity that depends only on $n,G,a$ and that converges to 0 as $n\to\infty$. For example, if $j=2,\ldots,n$ then
\begin{align}
|(Z_n)_{j,j-1}|&=\left|\frac{a(G(\hat x_j))+\mu_j}{G'(\hat x_j)+\delta_j}-\frac{a(G(\hat x_j))}{G'(\hat x_j)}\right|\le\left|\frac{a(G(\hat x_j))+\mu_j}{G'(\hat x_j)+\delta_j}-\frac{a(G(\hat x_j))}{G'(\hat x_j)+\delta_j}\right|+\left|\frac{a(G(\hat x_j))}{G'(\hat x_j)+\delta_j}-\frac{a(G(\hat x_j))}{G'(\hat x_j)}\right|\notag\\[3pt]
&=\left|\frac{\mu_j}{G'(\hat x_j)+\delta_j}\right|+\left|\frac{a(G(\hat x_j))\delta_j}{G'(\hat x_j)(G'(\hat x_j)+\delta_j)}\right|
\le\frac{\omega_a(\frac h2\|G'\|_\infty)}{m_{G'}-\omega_{G'}(h)}+\frac{\|a\|_\infty\omega_{G'}(h)}{m_{G'}(m_{G'}-\omega_{G'}(h))}.\label{bound-Z}
\end{align}
Thus, both the $1$-norm and the $\infty$-norm of $Z_n$ are bounded by a quantity that depends only on $n,G,a$ and that converges to 0 as $n\to\infty$, and we conclude that $\|Z_n\|\to0$ as $n\to\infty$ by \eqref{2-norm}.

Now we consider the case where $G$ has a finite number of points $\hat x$ where ${G'(\hat x)=0}$. In this case, the previous argument does not work because $m_{G'}=0$. However, we can still prove \eqref{a.c.s.constant} in the following way. Let $\hat x^{(1)},\ldots,\hat x^{(s)}$ be the points where $G'$ vanishes, and consider the open balls (intervals)
$B(\hat x^{(k)},\frac1m)=\{\hat x\in[0,1]:|\hat x-\hat x^{(k)}|<\frac1m\}$. Since $G\in C^1([0,1])$ is increasing, the function $G'$ is continuous and positive on the compact set given by the complement of
the union $\bigcup_{k=1}^sB(\hat x^{(k)},\frac1m)$, and so
\[ m_{G',m}=\min_{\hat x\in[0,1]\backslash\bigcup_{k=1}^sB(\hat x^{(k)},\frac1m)}G'(\hat x)>0. \]
For all indices $j=1,\ldots,n$ such that $\hat x_j\in[0,1]\backslash\bigcup_{k=1}^sB(\hat x^{(k)},\frac1m)$, the components in the $j$th row of the tridiagonal matrix \eqref{FD.Z} are bounded in modulus by a quantity that depends only on $n,m,G,a$ and that converges to 0 as $n\to\infty$. This becomes immediately clear if we note that, for such indices $j$, the inequality \eqref{bound-Z} holds unchanged with $m_{G'}$ replaced by $m_{G',m}$. The number of remaining rows of $Z_n$ (the rows corresponding to indices $j$ such that $\hat x_j\in\bigcup_{k=1}^sB(\hat x^{(k)},\frac1m)$) is at most $2s(n+1)/m+s$. Indeed, each interval $B(\hat x^{(k)},\frac1m)$ has length $2/m$ (at most) and can contain at most
$2(n+1)/m+1$ grid points $\hat x_j$. Thus, for every $n,m$ we can split the matrix $Z_n$ into the sum of two terms, i.e., 
\[ Z_n = R_{n,m} + N_{n,m}, \] 
where $N_{n,m}$ is obtained from $Z_n$ by setting to zero all the rows corresponding to indices $j$ such that $\hat x_j\in\bigcup_{k=1}^sB(\hat x^{(k)},\frac1m)$ and $R_{n,m}=Z_n-N_{n,m}$ is obtained from $Z_n$ by setting to zero all the rows corresponding to indices $j$ such that $\hat x_j\in[0,1]\backslash\bigcup_{k=1}^sB(\hat x^{(k)},\frac1m)$. From the above discussion and \eqref{2-norm}, we have
\[ \lim_{n\to\infty}\|N_{n,m}\|\le\lim_{n\to\infty}\sqrt{|N_{n,m}|_1|N_{n,m}|_\infty}=0 \]
for all $m$, and
\[ {\rm rank}(R_{n,m})\le\frac{2s(n+1)}{m}+s \]
for all $m,n$. In particular, for each $m$ we can choose $n_m$ such that, for $n\ge n_m$, ${\rm rank}(R_{n,m})\le3sn/m$ and $\|N_{n,m}\|\le1/m$. The convergence \eqref{a.c.s.constant} now follows from the definition of a.c.s.\ (Definition~\ref{S.a.c.s.}).
\end{proof}

An increasing bijective map $G:[0,1]\to[0,1]$ in $C^1([0,1])$ is said to be regular if $G'(\hat x)\ne0$ for all $\hat x\in[0,1]$ and is said to be singular otherwise, i.e., if $G'(\hat x)=0$ for some $\hat x\in[0,1]$. If $G$ is singular, any point $\hat x\in[0,1]$ such that $G'(\hat x)=0$ is referred to as a singularity point (or simply a singularity) of $G$. 
The choice of a map $G$ with one or more singularity points corresponds to adopting a local refinement strategy, according to which the grid points $x_j$ rapidly accumulate at the $G$-images of the singularities as $n$ increases. For example, if
\begin{equation}\label{Gq}
G(\hat x)=\hat x^q, \qquad q>1,
\end{equation}
then 0 is a singularity of $G$ (because $G'(0)=0$) and the grid points
\[ x_j=G(\hat x_j)=\Bigl(\frac{j}{n+1}\Bigr)^q,\quad\ j=0,\ldots,n+1,\]
rapidly accumulate at $G(0)=0$ as $n\to\infty$. We note that, whenever $G$ is singular, the symbol in \eqref{nuGLT} is unbounded (except in some rare cases where $a(G(\hat x))$ and $G'(\hat x)$ vanish simultaneously).

\subsection{FE Discretization of DEs}\label{sec:FEs}

\subsubsection{FE Discretization of Convection-Diffusion-Reaction Equations}\label{FEs}

Consider the following convection-diffusion-reaction problem in divergence form with Dirichlet boundary conditions: 
\begin{equation}\label{toy_ex1.G}
\left\{\begin{aligned}
&-(a(x)u'(x))'+b(x)u'(x)+c(x)u(x) = f(x), &&\quad x\in(0,1),\\[3pt]
&u(0)=u(1)=0.
\end{aligned}\right.
\end{equation}

\paragraph{Weak form}
Before considering the FE discretization of \eqref{toy_ex1.G}, we need to introduce the weak form of \eqref{toy_ex1.G}.
We first recall that, if $\Omega\subset\mathbb R$ is a bounded interval whose endpoints are, say, $\alpha$ and $\beta$, the Sobolev spaces $H^1(\Omega)$ and $H^1_0(\Omega)$ are defined as follows \cite[Chapter~8]{Brezis}:
\begin{align*}
H^1(\Omega)&=\biggl\{v:\overline\Omega\to\mathbb R:v\in C(\overline\Omega),\ v\mbox{ is differentiable a.e.\ with }v'\in L^2(\Omega),\\
&\hphantom{{}=\biggl\{v:\overline\Omega\to\mathbb R:{}}v(x)=v(\alpha)+\int_\alpha^xv'(y){\rm d}y\,\mbox{ for all }\,x\in\overline\Omega\biggr\},\\[8pt]
H^1_0(\Omega)&=\{v\in H^1(\Omega):v(\alpha)=v(\beta)=0\}.
\end{align*}
The derivative $v'$ of a function $v\in H^1(\Omega)$, which exists a.e.\ in $\Omega$ but may not be defined at some points of $\Omega$, is called weak (Sobolev) derivative.

\begin{example}\label{basic_exe}
We show that the hat-function\,\footnote{\,The name ``hat-function'' is due to the shape of the graph of $v$, which looks like a pointed hat.}
\[ v(x)=2x\chi_{\left[0,\frac1{2}\right)}(x)+(2-2x)\chi_{\left[\frac1{2},1\right)}(x)=\left\{\begin{aligned}&2x, &\quad x\in\bigl[0,\textstyle{\frac12}\bigr),\\&2-2x, &\quad x\in\bigl[\textstyle{\frac12},1\bigr],\end{aligned}\right. \]
belongs to $H^1_0([0,1])$. It is clear that $v\in C([0,1])$, $v(0)=v(1)=0$, and $v$ is differentiable a.e.\ in $[0,1]$ with
\[ v'(x)=\left\{\begin{aligned}&2, &\quad x\in\bigl[0,\textstyle{\frac12}\bigr),\\&-2, &\quad x\in\bigl(\textstyle{\frac12},1\bigr],\end{aligned}\right. \]
belonging to $L^2([0,1])$. It only remains to prove that $v$ is related to its derivative $v'$ through the fundamental formula of integral calculus, i.e.,
\[ v(x)=v(0)+\int_0^xv'(y){\rm d}y,\qquad\forall\,x\in[0,1]. \]
This is a matter of computations: taking into account that $v(0)=0$, for every $x\in[0,1]$ we have
\begin{align*}
v(0)+\int_0^xv'(y){\rm d}y&=\int_0^x\Bigl(2\chi_{\left[0,\frac1{2}\right)}(y)-2\chi_{\left[\frac1{2},1\right]}(y)\Bigr){\rm d}y=\left\{\begin{aligned}
&\int_0^x2\hspace{0.5pt}{\rm d}y=2x, &\quad x\in\bigl[0,\textstyle{\frac12}\bigr),\\
&1+\int_{\frac1{2}}^x-2\hspace{0.5pt}{\rm d}y=2-2x, &\quad x\in\bigl[\textstyle{\frac12},1\bigr],
\end{aligned}\right.\\
&=v(x).
\end{align*}
\end{example}


Multiplying both sides of the DE in \eqref{toy_ex1.G} by a generic test function $w\in H^1_0([0,1])$ and integrating (formally) over $[0,1]$, we obtain the {\em weak form} of \eqref{toy_ex1.G}, which
reads as follows \cite[Chapter~8]{Brezis}: find $u\in H^1_0([0,1])$ such that 
\begin{equation}\label{toy-weak.G}
\textup{a}(u,w)=\textup{f}(w),\qquad\forall\,w\in H^1_0([0,1]),
\end{equation}
where
\begin{align*}
\textup{a}(u,w)&=\int_0^1a(x)u'(x)w'(x){\rm d} x+\int_0^1b(x)u'(x)w(x){\rm d} x+\int_0^1c(x)u(x)w(x){\rm d} x,\\
\textup{f}(w)&=\int_0^1f(x)w(x){\rm d} x.
\end{align*}
The solution $u$ of \eqref{toy_ex1.G} is also a solution of \eqref{toy-weak.G}, which means that the set of solutions of \eqref{toy-weak.G} contains the solution $u$ of \eqref{toy_ex1.G}. In particular, if the solution of \eqref{toy-weak.G} is unique, it coincides with the solution $u$ of \eqref{toy_ex1.G}.
If the solution $u$ of \eqref{toy_ex1.G} does not exist---which may happen, for instance, if the coefficient $a$ is not differentiable on $(0,1)$---but the solution $u$ of \eqref{toy-weak.G} exists and is unique, then the latter is taken, by definition, as the solution of \eqref{toy_ex1.G}.
In the FE method, we look for an approximation of the solution $u$ of \eqref{toy-weak.G}, assuming it is unique.
Existence and uniqueness of the solution $u$ of \eqref{toy-weak.G} are guaranteed under certain assumptions by the Lax--Milgram theorem; see \cite[Chapter~8]{Brezis}.

\paragraph{FE discretization}
We consider the approximation of \eqref{toy_ex1.G} by classical linear FEs on a uniform mesh in $[0,1]$ with stepsize $h=\frac1{n+1}$. We briefly describe here this approximation technique and for more details we refer the reader to \cite[Chapter~4]{Q} or to any other good book on FEs.
Let $n\in\mathbb N$, $h=\frac1{n+1}$ and $x_i=ih$, $i=0,\ldots,n+1$. Consider the subspace $\mathscr W_n=\textup{span}(\varphi_1,\ldots,\varphi_n)\subset H^1_0([0,1])$, where $\varphi_1,\ldots,\varphi_n$ are the so-called hat-functions:
\begin{align}\label{hat_funs.G}
\varphi_i(x)&=\frac{x-x_{i-1}}{x_i-x_{i-1}}\chi_{[x_{i-1},\,x_i)}(x)+\frac{x_{i+1}-x}{x_{i+1}-x_i}\chi_{[x_i,\,x_{i+1})}(x),\qquad i=1,\ldots,n;
\end{align}
see Figure~\ref{hat-funs}. 
It is not difficult to see that $\mathscr W_n$ is the space of continuous piecewise linear functions corresponding to the sequence of points $0=x_0<x_1<\ldots<x_{n+1}=1$ and vanishing on the boundary of $[0,1]$, i.e.,
\begin{align*}
\mathscr W_n&=\bigl\{s:[0,1]\to\mathbb R:s\in C([0,1]),\\
&\hphantom{{}=\bigl\{s:[0,1]\to\mathbb R:{}}s_{\left|\left[\frac i{n+1^{\vphantom{1}}},\frac{i+1}{n+1^{\vphantom{1}}}\right)\right.}\in\mathbb P_1\,\mbox{ for }\,i=0,\ldots,n,\ s(0)=s(1)=0\bigr\},
\end{align*}
where $\mathbb P_1$ is the space of polynomials of degree less than or equal to~1.
\begin{figure}[t]
\centering
\includegraphics[width=0.60\textwidth]{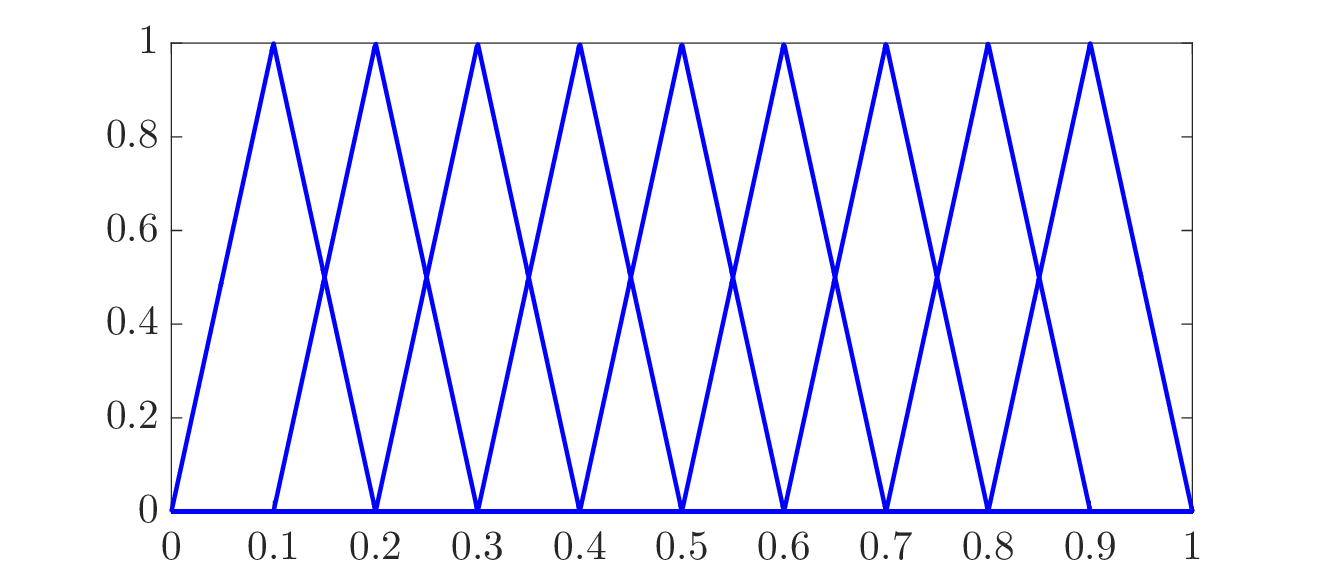}
\caption{Graph of the hat-functions $\varphi_1,\ldots,\varphi_n$ for $n=9$.}
\label{hat-funs}
\end{figure}
In the linear FE approach based on the uniform mesh $\{x_0,\ldots,x_{n+1}\}$, we look for an approximation $u_{\mathscr W_n}$ of $u$ by solving the following (Galerkin) problem: find $u_{\mathscr W_n}\in\mathscr W_n$ such that 
\begin{equation}\label{toy-weak-g.G}
\textup{a}(u_{\mathscr W_n},w)=\textup{f}(w),\qquad\forall\,w\in\mathscr W_n.
\end{equation}
Note that problem \eqref{toy-weak-g.G} is the same as problem \eqref{toy-weak.G}, with the only difference that $H^1_0([0,1])$ is replaced by the subspace $\mathscr W_n$. When $n\to\infty$, the subspace $\mathscr W_n$ tends to ``invade'' the whole space $H^1_0([0,1])$.\,\footnote{\,This can be seen by observing that every $w\in H^1_0([0,1])$ is the limit, say, in $\infty$-norm, of a sequence of functions $w_n\in\mathscr W_n$ (take $w_n$ as the piecewise linear interpolant of $w$ on the nodes $x_0,\ldots,x_{n+1}$).} We can therefore expect that the solution $u_{\mathscr W_n}$ of \eqref{toy-weak-g.G} will converge (in some topology) to the solution $u$ of \eqref{toy-weak.G} as $n\to\infty$, which is in fact the case \cite[Chapter~4]{Q}.
Now, since $\{\varphi_1,\ldots,\varphi_n\}$ is a basis of $\mathscr W_n$, we can write $u_{\mathscr W_n}=\sum_{j=1}^nu_j\varphi_j$ for a unique vector $\mathbf{u}=(u_1,\ldots,u_n)^T$. Moreover, by linearity, the equation in \eqref{toy-weak-g.G} is satisfied for all $w\in\mathscr W_n$ if and only if it is satisfied for the basis functions $\varphi_1,\ldots,\varphi_n$, i.e., if and only if
\begin{align*}
\textup{a}(u_{\mathscr W_n},\varphi_i)=\textup{f}(\varphi_i)\,\mbox{ for all }\,i=1,\ldots,n&\ \iff\ \textup{a}\Biggl(\hspace{0.75pt}\sum_{j=1}^nu_j\varphi_j,\varphi_i\Biggr)=\textup{f}(\varphi_i)\,\mbox{ for all }\,i=1,\ldots,n\\
&\ \iff\ \sum_{j=1}^nu_j\hspace{0.5pt}\textup{a}(\varphi_j,\varphi_i)=\textup{f}(\varphi_i)\,\mbox{ for all }\,i=1,\ldots,n.
\end{align*}
Thus, the computation of $u_{\mathscr W_n}$ (i.e., of $\mathbf{u}$) reduces to solving the linear system
\begin{equation*} 
A_n\mathbf{u}=\mathbf{f},
\end{equation*}
where $\mathbf{f}=(\textup{f}(\varphi_1),\ldots,\textup{f}(\varphi_n))^T$ and $A_n$ is the stiffness matrix,
\begin{equation*}
A_n=[\textup{a}(\varphi_j,\varphi_i)]_{i,j=1}^n. 
\end{equation*}
Note that $A_n$ admits the following decomposition: 
\begin{equation}\label{A.dec.G}
A_n=K_n+Z_n,
\end{equation}
where
\begin{equation}\label{Kn(a).G}
K_n=\left[\int_0^1a(x)\varphi_j'(x)\varphi_i'(x){\rm d} x\right]_{i,j=1}^n
\end{equation}
is the (symmetric) diffusion matrix and
\begin{equation}\label{Zn.G}
Z_n=\left[\int_0^1b(x)\varphi_j'(x)\varphi_i(x){\rm d} x\right]_{i,j=1}^n+\left[\int_0^1c(x)\varphi_j(x)\varphi_i(x){\rm d} x\right]_{i,j=1}^n
\end{equation}
is the sum of the convection and reaction matrix.

\paragraph{GLT spectral analysis of the FE discretization matrices}
Using the theory of GLT sequences we now derive the spectral and singular value distribution of the sequence of normalized stiffness matrices $\{\frac1{n+1}A_n\}_n$ under very weak hypotheses on the DE coefficients $a,b,c$. Throughout this work, if $f:D\subseteq\mathbb R^k\to\mathbb C$ is a function in $L^1(D)$,
we set $\|f\|_{L^1}=\int_D|f(\xx)|{\rm d}\xx$. 

\begin{theorem}\label{FE_T1}
If $a,b,c:[0,1]\to\mathbb R$ belong to $L^1([0,1])$ then
\begin{equation}\label{FEcdrGLT}
\Bigl\{\frac1{n+1}A_n\Bigr\}_n\sim_{\rm GLT}a(x)(2-2\cos\theta)
\end{equation}
and
\begin{equation}\label{FEcdrsigla}
\Bigl\{\frac1{n+1}A_n\Bigr\}_n\sim_{\sigma,\lambda}a(x)(2-2\cos\theta).
\end{equation}
\end{theorem}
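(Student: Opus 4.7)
The plan is to work with the decomposition $A_n=K_n+Z_n$ given in \eqref{A.dec.G} and to treat the two pieces separately. Concretely, I aim to prove
\[
\Bigl\{\tfrac1{n+1}K_n\Bigr\}_n\sim_{\rm GLT}a(x)(2-2\cos\theta),\qquad \Bigl\{\tfrac1{n+1}Z_n\Bigr\}_n\sim_{\rm GLT}0,
\]
together with the Frobenius bound $\|Z_n/(n+1)\|_2=o(n^{1/2})$. Once these are in hand, the GLT relation \eqref{FEcdrGLT} follows from {\bf GLT4}, the singular value distribution in \eqref{FEcdrsigla} follows from {\bf GLT1}, and the spectral distribution in \eqref{FEcdrsigla} follows from {\bf GLT2} applied with $X_n=K_n/(n+1)$ (Hermitian, symbol $a(x)(2-2\cos\theta)$) and $Y_n=Z_n/(n+1)$.

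For the convection-reaction piece, the tridiagonality of $Z_n$ and the explicit formulas in \eqref{Zn.G} give, for each nonzero entry, the bound
\[
\Bigl|\tfrac1{n+1}(Z_n)_{ij}\Bigr|\le \omega_b^{\rm int}(2h)+h\,\omega_c^{\rm int}(2h),
\]
because $|\varphi_i|\le1$, $|\varphi_j'|\le 1/h$, the supports have measure at most $2h$, and the factor $1/(n+1)=h$ absorbs the $1/h$ from the derivative. Summing the squares over the $O(n)$ nonzero entries yields
\[
\|Z_n/(n+1)\|_2\le\|Z_n/(n+1)\|_F\le\sqrt{3n}\bigl(\omega_b^{\rm int}(2h)+h\,\omega_c^{\rm int}(2h)\bigr)=o(n^{1/2}),
\]
by Theorem~\ref{ac} (this is where the modulus of integral continuity enters, allowing $b,c\in L^1$ rather than just bounded). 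Then {\bf Z2} gives $\{Z_n/(n+1)\}_n\sim_\sigma0$ and {\bf GLT3} gives $\{Z_n/(n+1)\}_n\sim_{\rm GLT}0$.

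The diffusion piece is the heart of the argument. By direct computation, $K_n/(n+1)$ is tridiagonal symmetric with $(K_n/(n+1))_{i,i\pm1}=-\frac1h\int_I a$ over an interval $I$ of length $h$, and diagonal entry equal to the sum of the negatives of the two off-diagonals on either side. My plan is to first settle the case $a\in C([0,1])$ by comparing $K_n/(n+1)$ to the FD diffusion matrix of \eqref{An}: the mean-value estimate $\frac1h\int_{x_i}^{x_{i+1}}a=a(x_{i+1/2})+O(\omega_a(h))$ shows that the difference has spectral norm $O(\omega_a(h))=o(1)$, so by Theorem~\ref{FD_T1}, {\bf GLT3}, and {\bf GLT4} we get $\{K_n/(n+1)\}_n\sim_{\rm GLT}a(x)(2-2\cos\theta)$. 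Then I extend to general $a\in L^1([0,1])$ via {\bf GLT7}. Choose continuous $a_m\to a$ in $L^1([0,1])$ (by density of $C([0,1])$ in $L^1$), let $K_n^{(m)}$ be the diffusion matrix built from $a_m$, and note that $a_m(x)(2-2\cos\theta)\to a(x)(2-2\cos\theta)$ in measure. For the a.c.s.\ convergence, the key estimate uses the trace-norm inequality \eqref{trace-norm-Lusin}: the difference $K_n/(n+1)-K_n^{(m)}/(n+1)$ is tridiagonal, and by the explicit entry formulas its off-diagonals telescope so that
\[
\Bigl\|\tfrac1{n+1}K_n-\tfrac1{n+1}K_n^{(m)}\Bigr\|_1\le\sum_{i,j}\Bigl|\bigl(\tfrac1{n+1}(K_n-K_n^{(m)})\bigr)_{ij}\Bigr|\le C(n+1)\|a-a_m\|_{L^1}.
\]
Since $\limsup_{n\to\infty}C(n+1)\|a-a_m\|_{L^1}/n= C\|a-a_m\|_{L^1}\to 0$ as $m\to\infty$, {\bf ACS1} with $p=1$ gives $\{K_n^{(m)}/(n+1)\}_n\stackrel{\rm a.c.s.}{\longrightarrow}\{K_n/(n+1)\}_n$, and {\bf GLT7} concludes.

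The main obstacle I foresee is precisely this trace-norm estimate in the approximation step: one needs the tridiagonal structure of $K_n-K_n^{(m)}$ to give a bound linear in $n$ with factor $\|a-a_m\|_{L^1}$, which is exactly what {\bf ACS1} demands at $p=1$; a cruder bound via Frobenius or spectral norm would not suffice under the mere integrability hypothesis on $a$, because a single entry of $K_n/(n+1)$ can be as large as $(1/h)\|a\|_{L^1}$ when $a$ is unbounded. The choice of the $p=1$ route is what makes the proof work under the minimal assumption $a\in L^1$.
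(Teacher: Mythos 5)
Your proof is correct, and it follows the paper's overall skeleton---decompose $A_n=K_n+Z_n$, bound $Z_n/(n+1)$ in Frobenius norm via the modulus of integral continuity, prove the GLT relation for the diffusion piece first for continuous $a$ and then extend to $L^1$ by density, \textbf{ACS1} with $p=1$ and \textbf{GLT7}---but the continuous-coefficient step is handled by a genuinely different argument. The paper writes $K_n(g)\approx D_n(g)K_n(1)$ and uses the explicit tridiagonal Toeplitz form $K_n(1)/(n+1)=T_n(2-2\cos\theta)$ together with \textbf{GLT3}--\textbf{GLT4}, keeping the argument self-contained within the FE framework; you instead observe that the entrywise averages $\frac1h\int_{x_i}^{x_{i+1}}a$ of the FE diffusion matrix are within $O(\omega_a(h))$ of the midpoint samples $a(x_{i+1/2})$ appearing in the FD matrix \eqref{An}, so $\|K_n/(n+1)-A_n^{\rm FD}\|\to0$ and Theorem~\ref{FD_T1} can be invoked directly. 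Both are valid; your route is shorter but relies on the earlier FD theorem as a black box (and implicitly on the constant-coefficient identity only through that theorem), while the paper's route stays inside the FE computation and makes visible why $2-2\cos\theta$ is the ``FE trigonometric polynomial'' of $-u''$ (its Remark~\ref{oss1.FE}). One small imprecision: the trace-norm estimate in your $L^1$ extension does not actually come from any ``telescoping''---it comes from each nonzero entry of $(K_n-K_n^{(m)})/(n+1)$ being bounded by $\frac1h\int_I|a-a_m|$ over a short interval $I$, with each such interval contributing to only $O(1)$ entries, so the double sum is $\le C(n+1)\|a-a_m\|_{L^1}$ (equivalently, $\sum_i|\varphi_i'(x)|\le2(n+1)$ a.e., as in the paper). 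Your diagnosis of why $p=1$ is essential under mere integrability of $a$ is exactly right.
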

\begin{proof}
The proof consists of the following steps.

\medskip

\noindent{\em Step~1.} We show that 
\begin{equation}\label{Zn->0}
\biggl\|\frac1{n+1}Z_n\biggr\|_2=o(n^{1/2}).
\end{equation}
We first note that $Z_n$ is a banded (tridiagonal) matrix, because $(Z_n)_{ij}=0$ for all $i,j=1,\ldots,n$ such that $|i-j|>1$, due to the local support property of the hat-functions: $\textup{supp}(\varphi_i)=[x_{i-1},x_{i+1}]$, $i=1,\ldots,n$. Moreover, since $|\varphi_i(x)|\le1$ for every $x\in[0,1]$ and $|\varphi_i'(x)|\le n+1$ for almost every $x\in[0,1]$, for all $i,j=1,\ldots,n$ we have
\begin{align*}
|(Z_n)_{ij}|&=\left|\int_0^1b(x)\varphi_j'(x)\varphi_i(x){\rm d} x+\int_0^1c(x)\varphi_j(x)\varphi_i(x){\rm d} x\right|\\
&=\left|\int_{x_{i-1}}^{x_{i+1}}b(x)\varphi_j'(x)\varphi_i(x){\rm d} x+\int_{x_{i-1}}^{x_{i+1}}c(x)\varphi_j(x)\varphi_i(x){\rm d} x\right|\\
&\le(n+1)\int_{x_{i-1}}^{x_{i+1}}|b(x)|{\rm d}x+\int_{x_{i-1}}^{x_{i+1}}|c(x)|{\rm d} x\le(n+1)\,\omega_b^{\rm int}\Bigl(\frac2{n+1}\Bigr)+\|c\|_{L^1}.
\end{align*}
Thus,
\[ \|Z_n\|_2^2\le3n\left[(n+1)\,\omega_b^{\rm int}\Bigl(\frac2{n+1}\Bigr)+\|c\|_{L^1}\right]^2, \]
and \eqref{Zn->0} is proved.

\medskip

\noindent{\em Step~2.} Consider the linear operator $K_n(\cdot):L^1([0,1])\to\mathbb C^{n\times n}$,
\begin{equation*} 
K_n(g)=\left[\int_0^1g(x)\varphi_j'(x)\varphi_i'(x){\rm d} x\right]_{i,j=1}^n.
\end{equation*}
By \eqref{Kn(a).G}, we have $K_n=K_n(a)$. The next three steps are devoted to show that
\begin{equation}\label{L->K}
\Bigl\{\frac1{n+1}K_n(g)\Bigr\}_n\sim_{\rm GLT}g(x)(2-2\cos\theta),\qquad\forall\,g\in L^1([0,1]).
\end{equation}
Once this is done, the theorem is proved. Indeed, by applying \eqref{L->K} with ${g=a}$ we immediately get $\{\frac1{n+1}K_n\}_n\sim_{\rm GLT}a(x)(2-2\cos\theta)$. Since $\{\frac1{n+1}Z_n\}_n\sim_\sigma0$ by Step~1 and {\bf Z2}, the GLT relation \eqref{FEcdrGLT} follows from the decomposition
\begin{equation}\label{A.dec.norm.G}
\frac1{n+1}A_n=\frac1{n+1}K_n+\frac1{n+1}Z_n
\end{equation}
and {\bf GLT3}--{\bf GLT4}; the singular value distribution in \eqref{FEcdrsigla} follows from \eqref{FEcdrGLT} and {\bf GLT1}; and the spectral distribution in \eqref{FEcdrsigla} follows from \eqref{FEcdrGLT} and {\bf GLT2} applied to the decomposition \eqref{A.dec.norm.G}, taking into account Step~1. 

\medskip

\noindent{\em Step~3.}
We first prove \eqref{L->K} in the constant-coefficient case where $g=1$ identically. In this case, a direct computation based on \eqref{hat_funs.G} shows that
\begin{equation*} 
K_n(1)=\left[\int_0^1\varphi_j'(x)\varphi_i'(x){\rm d} x\right]_{i,j=1}^n=\frac{1}h\begin{bmatrix}
2 & -1 & \\
-1 & 2 & -1 & \\
& \ddots & \ddots & \ddots \\
& & -1 & 2  & -1\\
& & & -1 & 2
\end{bmatrix}=\frac1h\,T_n(2-2\cos\theta),
\end{equation*}
and the desired relation $\{\frac1{n+1}K_n(1)\}_n\sim_{\rm GLT}2-2\cos\theta$ follows from {\bf GLT3}. 
Two remarks are in order before proceeding with the next step.
\begin{itemize}[nolistsep,leftmargin=*]
	\item It is precisely the analysis of the constant-coefficient case considered in this step that allows one to realize what is the correct normalization factor. In our case, this is $\frac1{n+1}$, which removes the factor $\frac1h$ from $K_n(1)$ and yields a normalized matrix $\frac1{n+1}K_n(1)=T_n(2-2\cos\theta)$, whose components are {\em bounded away from $0$ and $\infty$} (in the present case, they are even constant).
	\item The analysis of the constant-coefficient case considered in this step was not difficult because we are dealing with simple linear FEs. In general, however, the constant-coefficient case could be very difficult to handle. In particular, an explicit construction of the matrix $K_n(1)$ as in the present case is impossible in general, and therefore one must be content to obtain an approximate construction (say, up to a small-rank plus a small-norm correction) and infer the corresponding GLT relation therefrom. This is what happens, for instance, in the case of higher-order FEs, where the basis functions $\varphi_1,\ldots,\varphi_n$ are replaced by higher-degree B-splines; see \cite[Step~3 of Theorem~10.15]{GLT-bookI}.
\end{itemize}

\medskip

\noindent{\em Step~4.}
Now we prove \eqref{L->K} in the case where $g\in C([0,1])$. We first illustrate the idea, and then we go into the details. The proof is based on the fact that the hat-functions \eqref{hat_funs.G} are ``locally supported''. Indeed, the support $[x_{i-1},x_{i+1}]$ of the $i$th hat-function $\varphi_i(x)$ is located near the point $\frac in\in[x_i,x_{i+1}]$, and the width of the support tends to 0 as $n\to\infty$. In this sense, the linear FE method considered herein belongs to the family of the so-called ``local'' methods. Since $g(x)$ varies continuously over $[0,1]$, the $(i,j)$ entry of $K_n(g)$ can be approximated as follows, for every $i,j=1,\ldots,n$:
\begin{align*} 
(K_n(g))_{ij}&=\int_0^1g(x)\varphi_j'(x)\varphi_i'(x){\rm d} x=\int_{x_{i-1}}^{x_{i+1}}g(x)\varphi_j'(x)\varphi_i'(x){\rm d} x\\
&\approx g\Bigl(\frac in\Bigr)\int_{x_{i-1}}^{x_{i+1}}\varphi_j'(x)\varphi_i'(x){\rm d} x=g\Bigl(\frac in\Bigr)\int_0^1\varphi_j'(x)\varphi_i'(x){\rm d} x=g\Bigl(\frac in\Bigr)(K_n(1))_{ij}.
\end{align*}
This approximation can be rewritten in matrix form as
\begin{equation}\label{K-approx}
K_n(g)\approx D_n(g)K_n(1).
\end{equation}
We will see that \eqref{K-approx} implies that $\{\frac1{n+1}K_n(g)-\frac1{n+1}D_n(g)K_n(1)\}_n\sim_\sigma0$, and \eqref{L->K} will then follow from Step~3 and {\bf GLT3}--{\bf GLT4}, taking into account the obvious decomposition
\begin{equation}\label{odec}
\frac1{n+1}K_n(g)=\frac1{n+1}D_n(g)K_n(1)+Y_n,\qquad Y_n=\frac1{n+1}K_n(g)-\frac1{n+1}D_n(g)K_n(1).
\end{equation}
Let us now go into the details. Since ${\rm supp}(\varphi_i)=[x_{i-1},x_{i+1}]$ and $|\varphi_i'(x)|\le n+1$ for almost every $x\in[0,1]$, for all $i,j=1,\ldots,n$ we have
\begin{align*}
\left|(K_n(g))_{ij}-(D_n(g)K_n(1))_{ij}\right|&=\left|\int_0^1\Bigl[g(x)-g\Bigl(\frac in\Bigr)\Bigr]\varphi_j'(x)\varphi_i'(x){\rm d} x\right|\le(n+1)^2\int_{x_{i-1}}^{x_{i+1}}\Bigl|g(x)-g\Bigl(\frac in\Bigr)\Bigr|{\rm d} x\\
&\le(n+1)^2\int_{x_{i-1}}^{x_{i+1}}\omega_g\Bigl(\frac2{n+1}\Bigr){\rm d}x=2(n+1)\,\omega_g\Bigl(\frac2{n+1}\Bigr).
\end{align*}
It follows that each entry of the matrix $Y_n=\frac1{n+1}K_n(g)-\frac1{n+1}D_n(g)K_n(1)$ is bounded in modulus by $2\,\omega_g(\frac2{n+1})$. Moreover, $Y_n$ is banded (tridiagonal), because of the local support property of the hat-functions. 
Thus, both the 1-norm and the $\infty$-norm of $Y_n$ are bounded by $6\hspace{0.75pt}\omega_g(\frac2{n+1})$, and \eqref{2-norm} yields $\|Y_n\|\le6\hspace{0.75pt}\omega_g(\frac2{n+1})\to0$ as $n\to\infty$. Hence, $\{Y_n\}_n\sim_\sigma0$ by {\bf Z1} or {\bf Z2}, which implies \eqref{L->K} by Step~3 and {\bf GLT3}--{\bf GLT4}, in view of the obvious decomposition~\eqref{odec}.

\medskip

\noindent{\em Step~5.}
Finally, we prove \eqref{L->K} in the general case where $g\in L^1([0,1])$. By the density of $C([0,1])$ in $L^1([0,1])$---see \cite[Theorem~3.14]{Rudinone}---there exist continuous functions $g_m\in C([0,1])$ such that $g_m\to g$ in $L^1([0,1])$. By Step~4, 
\begin{equation}\label{GLT7a}
\Bigl\{\frac1{n+1}K_n(g_m)\Bigr\}_n\sim_{\rm GLT}g_m(x)(2-2\cos\theta).
\end{equation}
Moreover, 
\begin{equation}\label{GLT7b}
g_m(x)(2-2\cos\theta)\to g(x)(2-2\cos\theta)\mbox{ \ in measure}.
\end{equation}
We show that 
\begin{equation}\label{GLT7c}
\Bigl\{\frac1{n+1}K_n(g_m)\Bigr\}_n\stackrel{\rm a.c.s.}{\longrightarrow}\Bigl\{\frac1{n+1}K_n(g)\Bigr\}_n,
\end{equation}
after which the desired relation \eqref{L->K} follows from \eqref{GLT7a}--\eqref{GLT7c} and {\bf GLT7}.
Since we have $\sum_{i=1}^n|\varphi_i'(x)|\le2(n+1)$ for almost every $x\in[0,1]$, by \eqref{trace-norm-Lusin} we obtain
\begin{align*}
\|K_n(g)-K_n(g_m)\|_1&\le\sum_{i,j=1}^n|(K_n(g))_{ij}-(K_n(g_m))_{ij}|=\sum_{i,j=1}^n\left|\int_0^1\bigl[g(x)-g_m(x)\bigr]\varphi_j'(x)\varphi_i'(x){\rm d} x\right|\\
&\le\int_0^1\bigl|g(x)-g_m(x)\bigr|\sum_{i,j=1}^n|\varphi_j'(x)|\,|\varphi_i'(x)|{\rm d} x\le4(n+1)^2\|g-g_m\|_{L^1}
\end{align*}
and
\[ \Bigl\|\frac1{n+1}K_n(g)-\frac1{n+1}K_n(g_m)\Bigr\|_1\le5n\|g-g_m\|_{L^1}.
\]
Thus, $\{\frac1{n+1}K_n(g_m)\}_n\stackrel{\rm a.c.s.}{\longrightarrow}\{\frac1{n+1}K_n(g)\}_n$ by {\bf ACS1}.
\end{proof}


\begin{remark}[\textbf{formal structure of the symbol}]\label{oss1.FE}
Problem \eqref{toy_ex1.G} can be formally rewritten as follows:
\begin{equation}\label{problem.FE}
\left\{\begin{aligned}
&-a(x)u''(x)+(b(x)-a'(x))u'(x)+c(x)u(x)=f(x), &&\quad x\in(0,1),\\[3pt]
&u(0)=u(1)=0.
\end{aligned}\right.
\end{equation}
It is therefore clear that the symbol $a(x)(2-2\cos\theta)$ has the same formal structure as the higher-order differential operator $-a(x)u''(x)$ associated with \eqref{problem.FE} (as in the FD case; see Remark~\ref{oss1}). The formal analogy becomes even more evident if we note that $2-2\cos\theta$ is the trigonometric polynomial in the Fourier variable coming from the FE discretization of the (negative) second derivative $-u''(x)$. Indeed, as we have seen in Step~3 of the proof of Theorem~\ref{FE_T1}, ${2-2\cos\theta}$ is the symbol of the sequence of FE diffusion matrices $\{\frac1{n+1}K_n(1)\}_n$, which arises from the FE approximation of the Poisson problem
\begin{equation*}
\left\{\begin{aligned}
&-u''(x)=f(x), &&\quad x\in(0,1),\\[3pt]
&u(0)=u(1)=0,
\end{aligned}\right.
\end{equation*}
i.e., problem~\eqref{toy_ex1.G} in the case where $a(x)=1$ and $b(x)=c(x)=0$ identically.
\end{remark}

\begin{remark}\label{mass_matrix}
Consider the linear operator $M_n(\cdot):L^1([0,1])\to\mathbb C^{n\times n}$,
\[ M_n(g)=\left[\int_0^1g(x)\varphi_j(x)\varphi_i(x){\rm d}x\right]_{i,j=1}^n. \]
A simple adaptation of the arguments used in Steps~3--5 of the proof of Theorem~\ref{FE_T1} allows one to show that
\begin{equation}\label{Mn(g)}
\{(n+1)M_n(g)\}_n\sim_{\rm GLT}g(x)\Bigl(\frac23+\frac13\cos\theta\Bigr),\qquad\forall\,g\in L^1([0,1]).
\end{equation}
Note that the reaction matrix appearing in the right-hand side of \eqref{Zn.G} coincides with
\begin{equation*}
M_n(c)=\left[\int_0^1c(x)\varphi_j(x)\varphi_i(x){\rm d}x\right]_{i,j=1}^n.
\end{equation*}
\end{remark}


\subsubsection{FE Discretization of a System of Equations}\label{le}

In this section we consider the linear FE approximation of a system of DEs, namely
\begin{equation}\label{toy_ex1}
\left\{
\begin{aligned}
-(a(x)u'(x))'+v'(x)&=f(x), &\quad x\in(0,1),\\[3pt]
-u'(x)-\rho v(x)&=g(x), &\quad x\in(0,1),\\[3pt]
u(0)=0,\quad u(1)=0,& \\[3pt]
v(0)=0,\quad v(1)=0.&
\end{aligned}\right.
\end{equation}
As we shall see, the resulting discretization matrices appear in the so-called saddle point form \cite[p.~3]{BGL}, and we will illustrate the way to compute the spectral and singular value distribution of their Schur complements using the theory of GLT sequences. It is worth noting that the Schur complement is a key tool for the numerical treatment of the related linear systems \cite[Section~5]{BGL}. 

\paragraph{FE discretization}
We consider the approximation of \eqref{toy_ex1} by linear FEs on a uniform mesh in $[0,1]$ with stepsize $h=\frac1{n+1}$. Let us describe it shortly.
Multiplying both sides of the DEs in \eqref{toy_ex1} by a generic test function $w\in H^1_0([0,1])$ and integrating (formally) over $[0,1]$, we obtain the weak form of \eqref{toy_ex1}, which reads as follows:
find $u,v\in H^1_0([0,1])$ such that 
\begin{equation}\label{toy-weak}
\left\{\begin{aligned}
\textstyle{\int_0^1a(x)u'(x)w'(x){\rm d} x + \int_0^1v'(x)w(x){\rm d} x} &= \textstyle{\int_0^1f(x)w(x){\rm d}x,\qquad\forall\,w\in H^1_0([0,1]),}\\[3pt]
\textstyle{-\int_0^1u'(x)w(x){\rm d} x - \rho\int_0^1v(x)w(x){\rm d} x} &= \textstyle{\int_0^1g(x)w(x){\rm d}x,\qquad\forall\,w\in H^1_0([0,1]).}
\end{aligned}\right.
\end{equation}
Let $n\in\mathbb N$, $h=\frac1{n+1}$ and $x_i=ih$, $i=0,\ldots,n+1$. In the linear FE approach based on the mesh $\{x_0,\ldots,x_{n+1}\}$, we fix the subspace $\mathscr W_n=\textup{span}(\varphi_1,\ldots,\varphi_n)\subset H^1_0([0,1])$, where $\varphi_1,\ldots,\varphi_n$ are the hat-functions in \eqref{hat_funs.G} (see also Figure~\ref{hat-funs}). Then, we look for approximations $u_{\mathscr W_n},v_{\mathscr W_n}$ of $u,v$ by solving the following (Galerkin) problem: find $u_{\mathscr W_n},v_{\mathscr W_n}\in\mathscr W_n$ such that 
\begin{equation}\label{toy-weak-g}
\left\{\begin{aligned}
\textstyle{\int_0^1a(x)u_{\mathscr W_n}'(x)w'(x){\rm d} x + \int_0^1v_{\mathscr W_n}'(x)w(x){\rm d} x} &= \textstyle{\int_0^1f(x)w(x){\rm d}x,\qquad\forall\,w\in\mathscr W_n,}\\[3pt]
\textstyle{-\int_0^1u_{\mathscr W_n}'(x)w(x){\rm d} x - \rho\int_0^1v_{\mathscr W_n}(x)w(x){\rm d} x} &= \textstyle{\int_0^1g(x)w(x){\rm d}x,\qquad\forall\,w\in\mathscr W_n.}
\end{aligned}\right.
\end{equation}
Since $\{\varphi_1,\ldots,\varphi_n\}$ is a basis of $\mathscr W_n$, we can write $u_{\mathscr W_n}\hspace{-0.16pt}=\hspace{-0.15pt}\sum_{j=1}^nu_j\hspace{1pt}\varphi_j$ and $v_{\mathscr W_n}\hspace{-0.16pt}=\hspace{-0.15pt}\sum_{j=1}^nv_j\hspace{1pt}\varphi_j$ for unique vectors $\mathbf{u}=(u_1,\ldots,u_n)^T$ and $\mathbf{v}=(v_1,\ldots,v_n)^T$. Moreover, by linearity, the equations in \eqref{toy-weak-g} are satisfied for all $w\in\mathscr W_n$ if and only if they are satisfied for the basis functions $\varphi_1,\ldots,\varphi_n$.
Thus,
the computation of $u_{\mathscr W_n},v_{\mathscr W_n}$ (i.e., of $\mathbf{u},\mathbf{v}$) reduces to solving the linear system
\begin{equation*} 
A_{2n}\begin{bmatrix}\mathbf{u}\\\mathbf{v}\end{bmatrix}=\begin{bmatrix}\mathbf{f}\\\mathbf{g}\end{bmatrix},
\end{equation*}
where $\mathbf{f}=\bigl[\int_0^1f(x)\varphi_i(x){\rm d} x\bigr]_{i=1}^n,\ \mathbf{g}=\bigl[\int_0^1g(x)\varphi_i(x){\rm d} x\bigr]_{i=1}^n$ and $A_{2n}$ is the stiffness matrix, which possesses the following saddle point structure:
\[ A_{2n} =\begin{bmatrix}
K_n & H_n\\[3pt]
H_n^T & -\rho M_n
\end{bmatrix}. \]
Here, the blocks $K_n, H_n, M_n$ are square matrices of size $n$, and precisely
{\allowdisplaybreaks\begin{align*}
K_n&=\left[\int_0^1a(x)\varphi_j'(x)\varphi_i'(x){\rm d} x\right]_{i,j=1}^n,\\ 
H_n&=\left[\int_0^1\varphi_j'(x)\varphi_i(x){\rm d} x\right]_{i,j=1}^n=\frac12\begin{bmatrix}
0 & 1 & & & \\
-1 & 0 & 1 & & \\
& \ddots & \ddots & \ddots &\\
& & -1 & 0 & 1 \\
& & & -1 & 0
\end{bmatrix}=-{\rm i}\,T_n(\sin\theta),\\ 
M_n&=\left[\int_0^1\varphi_j(x)\varphi_i(x){\rm d} x\right]_{i,j=1}^n=\frac{h}{6}\begin{bmatrix}
4 & 1 & \\
1 & 4 & 1 & \\
& \ddots & \ddots & \ddots \\
& & 1 & 4 & 1\\
& & & 1 & 4
\end{bmatrix}=\frac{h}{3}\,T_n(2+\cos\theta). 
\end{align*}}%
Note that $K_n$ is exactly the matrix appearing in \eqref{Kn(a).G}. Note also that the matrices $K_n,\,M_n$ are symmetric, while $H_n$ is skew-symmetric: $H_n^T=-H_n={\rm i}\,T_n(\sin\theta)$.

\paragraph{GLT spectral analysis of the Schur complements of the FE discretization matrices}
Assume that the matrices $K_n$ are invertible. This is satisfied, for example, if $a>0$ a.e., in which case the matrices $K_n$ are symmetric positive definite. 
Indeed, assuming that $a>0$ a.e., for every $\xx\in\mathbb R^n\setminus\{\mathbf0\}$ we have
\begin{align*}
\xx^TK_n\xx&=\sum_{i,j=1}^n(K_n)_{ij}x_ix_j=\sum_{i,j=1}^nx_ix_j\int_0^1a(x)\varphi_j'(x)\varphi_i'(x){\rm d}x=\int_0^1a(x)\Biggl(\sum_{i=1}^nx_i\varphi_i'(x)\Biggr)^2{\rm d}x>0,
\end{align*}
where the latter inequality is due to the fact that the function inside the integral is nonnegative and not a.e.\ equal to $0$, because otherwise we would have $\sum_{i=1}^nx_i\varphi_i'(x)=0$ for almost every $x\in[0,1]$ and
\[ \sum_{i=1}^nx_i\varphi_i(x)=\sum_{i=1}^nx_i\varphi_i(0)+\int_0^x\Biggl(\sum_{i=1}^nx_i\varphi_i'(y)\Biggr){\rm d}y=0,\qquad\forall\,x\in[0,1], \]
which is impossible, since $\varphi_1,\ldots,\varphi_n$ is a basis of $\mathscr W_n$ and $\xx\ne\mathbf0$. 
The (negative) Schur complement of $A_{2n}$ is the symmetric matrix given by
\begin{equation}\label{schur-toy}
S_n=\rho M_n+H_n^T\,K_n^{-1}\,H_n=\frac{\rho h}3\,T_n(2+\cos\theta)+T_n(\sin\theta)\,K_n^{-1}\,T_n(\sin\theta).
\end{equation}
In the following theorem, we perform the GLT spectral analysis of the sequence of normalized Schur complements $\{(n+1)S_n\}_n$, and we compute its 
spectral and singular value distribution under the additional necessary assumption that $a\ne0$ a.e.

\begin{theorem}\label{FE_T2}
Let $a:[0,1]\to\mathbb R$ be in $L^1([0,1])$ and let $\rho\in\mathbb R$. Suppose that the matrices $K_n$ are invertible and $a\ne0$ a.e. Then
\begin{equation}\label{FEsGLT}
\{(n+1)S_n\}_n\sim_{\rm GLT}\varsigma(x,\theta)
\end{equation}
and
\begin{equation}\label{FEssigla}
\{(n+1)S_n\}_n\sim_{\sigma,\lambda}\varsigma(x,\theta),
\end{equation}
where
\[ \varsigma(x,\theta)=\frac\rho3(2+\cos\theta)+\frac{\sin^2\theta}{a(x)(2-2\cos\theta)}. \]
\end{theorem}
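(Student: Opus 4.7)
The plan is to recognize $(n+1)S_n$ as an algebraic expression in GLT sequences whose symbols we already know, and then invoke the *-algebra and pseudoinversion properties \textbf{GLT4}--\textbf{GLT5}. Multiplying through by $(n+1)$ and using $(n+1)h=1$, we have
\[ (n+1)S_n = \frac{\rho}{3}T_n(2+\cos\theta) + T_n(\sin\theta)\Bigl[\tfrac1{n+1}K_n\Bigr]^{-1}T_n(\sin\theta), \]
so the two ingredients are the Toeplitz blocks $T_n(2+\cos\theta)$, $T_n(\sin\theta)$ and the (inverted) normalized diffusion matrix $\frac1{n+1}K_n$.

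First I would treat the first summand: by \textbf{GLT3}, $\{\frac{\rho}{3}T_n(2+\cos\theta)\}_n\sim_{\rm GLT}\frac{\rho}{3}(2+\cos\theta)$. For the second summand, Theorem~\ref{FE_T1} (applied with $b=c=0$, for which $Z_n=0$ and $A_n=K_n$) gives
\[ \Bigl\{\tfrac1{n+1}K_n\Bigr\}_n\sim_{\rm GLT}a(x)(2-2\cos\theta). \]
Since $a(x)(2-2\cos\theta)\ne0$ a.e.\ (the factor $2-2\cos\theta$ vanishes only on the null set $\{\theta=0\}$, and $a\ne0$ a.e.\ by hypothesis), and since the $K_n$ are invertible so that the Moore--Penrose pseudoinverse coincides with the ordinary inverse, \textbf{GLT5} yields
\[ \Bigl\{(n+1)K_n^{-1}\Bigr\}_n=\Bigl\{\bigl(\tfrac1{n+1}K_n\bigr)^{-1}\Bigr\}_n\sim_{\rm GLT}\frac1{a(x)(2-2\cos\theta)}. \]
Combining with \textbf{GLT3} ($\{T_n(\sin\theta)\}_n\sim_{\rm GLT}\sin\theta$) and applying the product part of \textbf{GLT4} twice,
\[ \bigl\{T_n(\sin\theta)\,(n+1)K_n^{-1}\,T_n(\sin\theta)\bigr\}_n\sim_{\rm GLT}\frac{\sin^2\theta}{a(x)(2-2\cos\theta)}. \]
Summing the two GLT relations via \textbf{GLT4} gives the GLT relation \eqref{FEsGLT}.

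For \eqref{FEssigla}, observe that $(n+1)S_n$ is Hermitian: $M_n$ is symmetric, $K_n$ is symmetric and invertible (so $K_n^{-1}$ is symmetric), and therefore $H_n^TK_n^{-1}H_n$ is symmetric. Hence \textbf{GLT1} applied to \eqref{FEsGLT} yields both the singular value and the spectral distribution in \eqref{FEssigla} simultaneously. The only substantive obstacle I anticipate is confirming the hypothesis of \textbf{GLT5}, i.e.\ the a.e.\ non-vanishing of the symbol $a(x)(2-2\cos\theta)$; everything else is a direct application of the *-algebra axioms of GLT sequences.
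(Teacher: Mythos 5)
Your proposal is correct and follows essentially the same route as the paper's proof: both start from the decomposition $(n+1)S_n=\frac{\rho}{3}T_n(2+\cos\theta)+T_n(\sin\theta)\bigl(\frac1{n+1}K_n\bigr)^{-1}T_n(\sin\theta)$, use the GLT relation $\{\frac1{n+1}K_n\}_n\sim_{\rm GLT}a(x)(2-2\cos\theta)$ established in the proof of Theorem~\ref{FE_T1}, and then apply \textbf{GLT3}--\textbf{GLT5} and \textbf{GLT1}. The only cosmetic difference is that the paper cites the intermediate relation \eqref{L->K} directly while you reach the same fact by specializing Theorem~\ref{FE_T1} to $b=c=0$, and you spell out the *-algebra steps that the paper leaves implicit.
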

\begin{proof}
In view of \eqref{schur-toy}, we have
\[ (n+1)S_n=\frac{\rho}3\,T_n(2+\cos\theta)+T_n(\sin\theta)\,\Bigl(\frac1{n+1}K_n\Bigr)^{-1}\,T_n(\sin\theta). \]
Moreover, by \eqref{L->K},
\[ \Bigl\{\frac1{n+1}K_n\Bigr\}_n=\Bigl\{\frac1{n+1}K_n(a)\Bigr\}_n\sim_{\rm GLT}a(x)(2-2\cos\theta). \]
Therefore, under the assumption that $a\ne0$ a.e., the GLT relation \eqref{FEsGLT} follows from {\bf GLT3}--{\bf GLT5}. The singular value and spectral distributions in \eqref{FEssigla} follow from \eqref{FEsGLT} and {\bf GLT1} as the Schur complements $S_n$ are symmetric.
\end{proof}

\subsubsection{FE Discretization of Second-Order Eigenvalue Problems}\label{FE_eigp}
Consider the following second-order eigenvalue problem: find eigenvalues $\lambda_k$ and eigenfunctions $u_k$, for $k=1,2,\ldots,$ such that
\begin{equation}\label{eig-p}
\left\{\begin{aligned}
&-(a(x)u_k'(x))'=\lambda_kc(x)u_k(x), &&\quad x\in(0,1),\\[3pt]
&u_k(0)=u_k(1)=0.
\end{aligned}\right.
\end{equation}

\paragraph{FE discretization}
We consider the approximation of \eqref{eig-p} by linear FEs on a uniform mesh in $[0,1]$ with stepsize $h=\frac1{n+1}$. Let us describe it shortly.
Multiplying both sides of the DE in \eqref{eig-p} by a generic test function $w\in H^1_0([0,1])$ and integrating (formally) over $[0,1]$, we obtain the weak form of \eqref{eig-p}, which
reads as follows: find eigenvalues $\lambda_k$ and eigenfunctions $u_k\in H^1_0([0,1])$, for $k=1,2,\ldots,$ such that
\begin{equation}\label{weak-eig-p}
\int_0^1a(x)u_k'(x)w'(x){\rm d}x=\lambda_k\int_0^1c(x)u_k(x)w(x){\rm d}x,\qquad\forall\,w\in H^1_0([0,1]).
\end{equation}
Let $n\in\mathbb N$, $h=\frac1{n+1}$ and $x_i=ih$, $i=0,\ldots,n+1$. In the linear FE approach based on the mesh $\{x_0,\ldots,x_{n+1}\}$, we fix the subspace $\mathscr W_n=\textup{span}(\varphi_1,\ldots,\varphi_n)\subset H^1_0([0,1])$, where $\varphi_1,\ldots,\varphi_n$ are the hat-functions in \eqref{hat_funs.G} (see also Figure~\ref{hat-funs}). Then, we look for approximations of the exact eigenpairs $(\lambda_k,u_k)$, $k=1,2,\ldots,$ by solving the following (Galerkin) problem: find numerical eigenvalues $\lambda_{k,n}$ and numerical eigenfunctions $u_{k,\mathscr W_n}\in\mathscr W_n$, for $k=1,2,\ldots,n,$ such that
\begin{equation}\label{G-p}
\int_0^1a(x)u_{k,\mathscr W_n}'(x)w'(x){\rm d}x=\lambda_{k,n}\int_0^1c(x)u_{k,\mathscr W_n}(x)w(x){\rm d}x,\qquad\forall\,w\in\mathscr W_n.
\end{equation}
Assuming that the exact and numerical eigenvalues are arranged in ascending order,\,\footnote{\,It can be shown that, in the cases of interest in physical applications, we have $a,c>0$ and the exact eigenpairs $(\lambda_k,u_k)$, $k=1,2,\ldots,$ can be arranged in a sequence such that $0<\lambda_1\le\lambda_2\le\ldots$ and $\lambda_k\to\infty$ as $k\to\infty$. For example, if $a=c=1$ identically then $\lambda_k=k^2\pi^2$ and $u_k(x)=\sin(k\pi x)$ for $k=1,2,\ldots$} 
the pair $(\lambda_{k,n},u_{k,\mathscr W_n})$ is taken as an approximation to the pair $(\lambda_k,u_k)$ for $k=1,\ldots,n$.
Since $\{\varphi_1,\ldots,\varphi_n\}$ is a basis of $\mathscr W_n$, we can write $u_{k,\mathscr W_n}=\sum_{j=1}^nu_{k,j}\varphi_j$ for a unique vector $\mathbf u_{k,n}=(u_{k,1},\ldots,u_{k,n})^T$. Moreover, by linearity, the equation in \eqref{G-p} is satisfied for all $w\in\mathscr W_n$ if and only if it is satisfied for the basis functions $\varphi_1,\ldots,\varphi_n$. Thus, the computation of $(\lambda_{k,n},u_{k,n})$ (i.e., of $(\lambda_{k,n},\mathbf u_{k,n})$) reduces to 
solving the generalized eigenvalue problem
\begin{equation}\label{gen-eig-p}
K_n\mathbf u_{k,n}=\lambda_{k,n}M_n\mathbf u_{k,n},
\end{equation}
where 
\[ K_n=\left[\int_0^1a(x)\varphi_j'(x)\varphi_i'(x){\rm d}x\right]_{i,j=1}^n,\qquad M_n=\left[\int_0^1c(x)\varphi_j(x)\varphi_i(x){\rm d}x\right]_{i,j=1}^n. \]
The symmetric matrices $K_n$ and $M_n$ are referred to as the stiffness and mass matrices, respectively. Assuming that $M_n$ is invertible, it is clear from \eqref{gen-eig-p} that the numerical eigenvalues $\lambda_{k,n}$, $k=1,\ldots,n,$ are just the eigenvalues of the matrix
\begin{equation}\label{L}
L_n=M_n^{-1}K_n.
\end{equation}

\paragraph{GLT spectral analysis of the FE discretization matrices}
Using the theory of GLT sequences, we now derive the spectral and singular value distribution of the normalized matrix-sequence $\{(n+1)^{-2}L_n\}_n$ under very weak hypotheses on the DE coefficients $a,c$.
Note that $L_n$ is well-defined whenever $M_n$ is invertible. In particular, $L_n$ is well-defined if $c>0$ a.e., because in this case $M_n$ is symmetric positive definite. Indeed, assuming that $c>0$ a.e., for every $\xx\in\mathbb R^n\setminus\{\mathbf0\}$ we have
\begin{align*}
\xx^TM_n\xx&=\sum_{i,j=1}^n(M_n)_{ij}x_ix_j=\sum_{i,j=1}^nx_ix_j\int_0^1c(x)\varphi_j(x)\varphi_i(x){\rm d}x=\int_0^1c(x)\Biggl(\sum_{i=1}^nx_i\varphi_i(x)\Biggr)^2{\rm d}x>0,
\end{align*}
where the latter inequality is due to the fact that the function inside the integral is nonnegative and not a.e.\ equal to $0$. 

\begin{theorem}
If $a,c:[0,1]\to\mathbb R$ belong to $L^1([0,1])$ and $c>0$ a.e.\ then
\begin{equation}\label{L-glt}
\{(n+1)^{-2}L_n\}_n\sim_{\rm GLT}\frac{a(x)}{c(x)}\,\frac{6-6\cos\theta}{2+\cos\theta}
\end{equation}
and
\begin{equation}\label{L-sigla}
\{(n+1)^{-2}L_n\}_n\sim_{\sigma,\lambda}\frac{a(x)}{c(x)}\,\frac{6-6\cos\theta}{2+\cos\theta}.
\end{equation}
\end{theorem}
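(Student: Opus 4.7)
The plan is to identify $(n+1)^{-2}L_n$ with the preconditioned matrix $[(n+1)M_n]^{-1}\cdot[(n+1)^{-1}K_n]$ and derive both \eqref{L-glt} and \eqref{L-sigla} in one stroke by invoking Theorem~\ref{exe-preconditioning}. This requires only that I recognize the GLT symbols of the two individual factors and check the hypotheses of that theorem.

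For the stiffness factor, Step~2 of the proof of Theorem~\ref{FE_T1}, applied with $g=a\in L^1([0,1])$, gives
\[ \Bigl\{\tfrac1{n+1}K_n\Bigr\}_n\sim_{\rm GLT}a(x)(2-2\cos\theta), \]
with each $K_n$ symmetric. For the mass factor, Remark~\ref{mass_matrix} applied with $g=c$ gives
\[ \{(n+1)M_n\}_n\sim_{\rm GLT}c(x)\bigl(\tfrac{2}{3}+\tfrac{1}{3}\cos\theta\bigr). \]
The hypothesis $c>0$ a.e.\ makes $(n+1)M_n$ HPD by the same quadratic-form argument used for $K_n$ in Section~\ref{le}, with $\varphi_i$ in place of $\varphi_i'$ and $c$ in place of $a$. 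Moreover, the symbol $\xi(x,\theta)=c(x)(\tfrac{2}{3}+\tfrac{1}{3}\cos\theta)$ is nonzero a.e., since $c\ne 0$ a.e.\ and $\tfrac{2}{3}+\tfrac{1}{3}\cos\theta\ge\tfrac{1}{3}$ on $[-\pi,\pi]$.

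With these ingredients in hand, Theorem~\ref{exe-preconditioning} applied to $A_n=(n+1)^{-1}K_n$ and $P_n=(n+1)M_n$ yields simultaneously the GLT relation and the singular-value/spectral distribution of $P_n^{-1}A_n=(n+1)^{-2}L_n$, with symbol
\[ \xi^{-1}\kappa=\frac{a(x)(2-2\cos\theta)}{c(x)\bigl(\tfrac{2}{3}+\tfrac{1}{3}\cos\theta\bigr)}=\frac{a(x)}{c(x)}\cdot\frac{6-6\cos\theta}{2+\cos\theta}, \]
which is exactly the claimed symbol. The only conceptual obstacle is that $L_n=M_n^{-1}K_n$ is generally non-Hermitian, so {\bf GLT1} alone cannot deliver the spectral distribution from the GLT relation; Theorem~\ref{exe-preconditioning} is precisely what bridges this gap, via the similarity $M_n^{-1}K_n\sim M_n^{-1/2}K_nM_n^{-1/2}$, so no further work should be needed.
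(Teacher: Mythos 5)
Your proposal is correct and follows essentially the same route as the paper: identify $(n+1)^{-2}L_n$ as the preconditioned matrix $((n+1)M_n)^{-1}((n+1)^{-1}K_n)$, invoke the GLT relations \eqref{L->K} and \eqref{Mn(g)} for the two factors, verify the HPD hypothesis on $(n+1)M_n$ and the a.e.\ nonvanishing of its symbol from $c>0$ a.e., and conclude via Theorem~\ref{exe-preconditioning}. The symbol computation matches the claimed $\frac{a(x)}{c(x)}\,\frac{6-6\cos\theta}{2+\cos\theta}$, and your closing remark about why {\bf GLT1} alone would not suffice for the spectral distribution is exactly the point Theorem~\ref{exe-preconditioning} addresses.
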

\begin{proof}
We note that $K_n=K_n(a)$ and $M_n=M_n(c)$, where $K_n(g)$ and $M_n(g)$ are the matrices appearing in \eqref{L->K} and \eqref{Mn(g)}, respectively.
Hence, by \eqref{L->K} and \eqref{Mn(g)}, we have
\[ \{(n+1)^{-1}K_n\}_n\sim_{\rm GLT}a(x)(2-2\cos\theta),\qquad\{(n+1)M_n\}_n\sim_{\rm GLT}c(x)\Bigl(\frac23+\frac13\cos\theta\Bigr). \]
Moreover, we observe that
\[ (n+1)^{-2}L_n=((n+1)M_n)^{-1}((n+1)^{-1}K_n). \]
Thus, the relations \eqref{L-glt}--\eqref{L-sigla} follow from Theorem~\ref{exe-preconditioning}, taking into account that the matrices $M_n$ are symmetric positive definite and $c(x)(\frac23+\frac13\cos\theta)\ne0$ a.e.\ because of our assumption that $c>0$ a.e.
\end{proof}

\section*{Acknowledgements}
The author is member of the Research Group GNCS (Gruppo Nazionale per il Calcolo Scientifico) of INdAM (Istituto Nazionale di Alta Matematica).
This work was supported by the Department of Mathematics of the University of Rome Tor Vergata through the MUR Excellence Department Project MatMod@TOV (CUP E83C23000330006).

\end{document}